\DeclareMathAlphabet{\mathbbold}{U}{bbold}{m}{n}
\theoremstyle{plain}
        \newtheorem{theorem}{Theorem}[section]
        \newtheorem*{theorem*}{Theorem}
        \newtheorem*{conj*}{Conjecture}
        \newtheorem{lemma}[theorem]{Lemma}
        \newtheorem{prop}[theorem]{Proposition}
        \newtheorem*{prop*}{Proposition}
        \newtheorem{cor}[theorem]{Corollary}
        \newtheorem*{cor*}{Corollary}
        \newtheorem{thmx}{Theorem}
\theoremstyle{definition}
        \newtheorem{definition}[theorem]{Definition}
        \newtheorem*{definition*}{Definition}
        \newtheorem{rem}[theorem]{Remark}
\theoremstyle{remark}
        \newtheorem*{remark}{Remark}
\numberwithin{equation}{section}
\numberwithin{theorem}{section}
\numberwithin{table}{section}
\numberwithin{figure}{section}
\def\th@plain{%
	\thm@notefont{}
	\itshape 
}
\def\th@definition{%
	\thm@notefont{}
	\normalfont 
}
\providecommand{\defn}[1]{\emph{#1}}
\newcommand{\inter}  {\operatorname{int}}
\newcommand{\id} {\operatorname{id}}
\newcommand{\card} {\operatorname{card}}
\newcommand{\R}{\mathbb{R}}
\newcommand{\C}{\mathbb{C}}
\newcommand{\N}{\mathbb{N}}      
\newcommand{\Z}{\mathbb{Z}}      
\renewcommand{\le}{\leqslant}
\renewcommand{\leq}{\leqslant}
\renewcommand{\ge}{\geqslant}
\renewcommand{\geq}{\geqslant}
\renewcommand{\:}{\colon}
\providecommand{\abs}[1]{\lvert#1\rvert}
\providecommand{\Absbig}[1]{\bigl\lvert#1\bigr\rvert}
\providecommand{\norm}[1]{\|#1\|}
\renewcommand{\=}{\coloneqq}
\newcommand{\cB}{\mathcal{B}}
\newcommand{\cC}{\mathcal{C}}
\renewcommand{\cD}{\mathcal{D}}
\newcommand{\cF}{\mathcal{F}}
\renewcommand{\cH}{\mathcal{H}}
\newcommand{\cI}{\mathcal{I}}
\newcommand{\cK}{\mathcal{K}}
\newcommand{\cM}{\mathcal{M}}
\newcommand{\cN}{\mathcal{N}}
\newcommand{\cP}{\mathcal{P}}
\newcommand{\fX}{\mathfrak{X}}
\newcommand{\fY}{\mathfrak{Y}}
\newcommand{\tgamma}{\widetilde{\gamma}}
\newcommand{\ttau}{\widetilde{\tau}}
\newcommand{\Sp}{\mathbb{S}}
\newcommand{\cellint}{\operatorname{int_\circ}}
\newcommand{\cellbound}{\partial_{\circ}}
\newcommand{\cls}[1]{\overline{#1}}
\newcommand{\loc}{{\operatorname{loc}}}
\newcommand{\celPair}{cellular pair}
\newcommand{\mfd}{\cM}
\newcommand{\dg}{d_g}
\newcommand{\Ko}{\cK^{\circ}}
\newcommand{\Rec}{\mathrm{Rec}}
\newcommand{\Cube}{\mathrm{Cube}}
\newcommand{\stdCube}{\mathbb{I}}
\newcommand{\Isom}{\mathrm{Isom}}
\newcommand{\Sym}{\mathrm{Sym}}
\newcommand{\diag}{\mathrm{diag}}
\newcommand{\Cone}{\mathrm{Cone}}
\newcommand{\ind}{i}
\newcommand{\stdInt}[1]{\inter #1}
\newcommand{\stdCellint}[1]{\cellint #1}
\newcommand{\brCellint}[1]{\cellint(#1)}
\newcommand{\fundSet}{fundamental subset}
\newcommand{\sk}[2]{\bigcup_{i\leq #2}#1^{[i]}}
\newcommand{\clcnR}{connected closed Riemannian}
\newcommand{\orclcnR}{oriented, connected, and closed Riemannian}
\newif\ifneworder
\begin{document}

\title{A class of Latt\`es maps with cellular structures}

\author{Zhiqiang Li and Hanyun Zheng}

\address{Zhiqiang~Li, School of Mathematical Sciences \& Beijing International Center for Mathematical Research, Peking University, Beijing 100871, China.}
\email{zli@math.pku.edu.cn}

\address{Hanyun~Zheng, School of Mathematical Sciences, Peking University, Beijing 100871, China.}
\email{1900013001@pku.edu.cn}

\subjclass[2020]{Primary: 37F10; Secondary: 30C65, 30L10, 37F15, 37F20, 37F31}

\keywords{Branched covering map, expanding dynamics, Markov partition, visual metric, quasisymmetry, quasiregular map, quasiconformal geometry, quasisymmetric uniformization.}

\begin{abstract} 
We show that a class of quasiregular Latt\`es maps, called orthotopic Latt\`es maps, are cellular Markov maps. This provides examples of expanding Thurston-type maps that are also uniformly quasiregular, and whose visual metrics are quasisymmetrically equivalent to the Riemannian distance.
\end{abstract}

\maketitle

{
    \hypersetup{linkcolor=black}
    \setcounter{tocdepth}{2}
    \tableofcontents
}


\section{Introduction}\label{sct: Intro}
As a higher-dimensional analog of the dynamics of rational maps on the Riemann sphere $\widehat\C$, the theory of uniformly quasiregular (abbreviated as UQR) maps has been developed in the past decades. Heuristically, a
quasiregular map is a map $\mfd^n\to\mfd^n$ with bounded distortion on an {\orclcnR} $n$-manifold $\mfd^n$, and a uniformly quasiregular map is a quasiregular map whose iterates have a uniform bound of distortion. We recall these notions in Section~\ref{sct: prelim} and refer the reader to \cite{IM01,Ka21,Ri93,Vu88} for the general quasiregular theory.

An important subclass of uniformly quasiregular maps are the quasiregular Latt\`es maps, which include the well-known Latt\`es maps on $\widehat\C$. A \defn{quasiregular Latt\`es map} is a uniformly quasiregular map $f\:\mfd^n\to\mfd^n$ that is semi-conjugate to a conformal affine map $A\:\R^n\to\R^n$ via a quasiregular map $h\:\R^n\to\mfd^n$ strongly automorphic with respect to some discrete subgroup $\Gamma<\Isom(\R^n)$, as in the following commutative diagram.
\begin{equation*}
	\xymatrix{
		\R^n \ar[r]^A \ar[d]_h & \R^n \ar[d]^h \\
		\mfd^n \ar[r]^f & \mfd^n
	}
\end{equation*} 
We give a precise definition in Section~\ref{sct: prelim}, and refer to \cite{IM01,May97,Ka22} for more information.

In dimension two, Latt\`es maps on $\widehat\C$ belong to the class of Thurston maps, i.e., branched covering maps with finite postcritical points and degree at least $2$; cf.~\cite{BM17} for an exposition of the theory of Thurston maps. In the higher-dimensional case, it is natural to investigate whether (quasiregular) Latt\`es maps have properties analogous to Thurston maps.

A class of higher-dimensional analogs of Thurston maps called \emph{Thurston-type maps} were introduced in \cite{LPZ25}. The definition of Thurston-type maps is based on topological and combinatorial conditions (see Definition~\ref{d: CPCF}), which naturally generalize the postcritically-finite condition of Thurston maps, and lead to a higher-dimensional theory having potential for fruitful results. In particular, \cite[Theorem~A]{LPZ25} states the following rigidity phenomenon reminiscent of the No Invariant Line Fields conjecture (see e.g.~\cite{McS98}): \emph{In dimension $n\geq 3$, if an expanding Thurston-type map is uniformly quasiregular, then it is a Latt\`es map.}

In view of the discussion above,
one may ask, in higher dimensions, which (quasiregular) Latt\`es maps are Thurston-type maps. In this article, we show that this holds for a class of Latt\`es maps, which we call orthotopic Latt\`es maps.
\begin{thmx}\label{tx: orthotopic Lattes}
	An orthotopic Latt\`es map $\mfd^n \to \mfd^n$, $n\ge3$, on a {\clcnR} $n$-manifold $\mfd^n$ is a cellular Markov map. Moreover, if $f$ has topological degree at least two, then $f$ is a Thurston-type map.
\end{thmx}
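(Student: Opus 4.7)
The plan is to build an explicit pair of compatible cell decompositions of $\mfd^n$ directly from the orthotope tiling of $\R^n$, and then read off both the cellular Markov property and the Thurston-type property from the semiconjugacy $f \circ h = h \circ A$. The hypothesis ``orthotopic'' should mean that the discrete group $\Gamma < \Isom(\R^n)$ admits a Euclidean orthotope (i.e.\ rectangular box) $R \subset \R^n$ as a fundamental domain, and is generated by the reflections in the faces of $R$ together with a translation lattice of periods. Since $f$ is a Latt\`es map, the conformal affine map $A$ must normalize $\Gamma$; combined with the orthotope hypothesis, this forces $A$ to be, up to composition with a face-preserving isometry of $R$, a positive integer dilation aligned with the coordinate directions of $R$.

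Concretely, I would take the tiling $\cT^0 = \{\gamma R : \gamma \in \Gamma\}$ of $\R^n$ and its refinement $\cT^1 = A^{-1}(\cT^0)$, which subdivides each $\gamma R$ into a grid of congruent sub-orthotopes. Both tilings are $\Gamma$-invariant, so strong automorphy of $h$ lets them descend to cell decompositions $\cD^0$ and $\cD^1$ of $\mfd^n$, whose cells of each dimension are $h$-images of faces of $\cT^0$ and $\cT^1$, respectively. The semiconjugacy $f \circ h = h \circ A$ then implies that $f$ maps each cell of $\cD^1$ onto a cell of $\cD^0$, and that on the interior of any $n$-cell this restriction is a homeomorphism: on the corresponding sub-orthotope in $\R^n$, $A$ is an affine bijection and $h$ is locally injective away from the reflection mirrors. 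This is exactly the cellular Markov condition.

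For the Thurston-type assertion, once the topological degree of $f$ is at least two, I would check the conditions of Definition~\ref{d: CPCF}. Being uniformly quasiregular of degree at least two on a closed manifold, $f$ is automatically a branched covering map. The branch set of $h$ consists of the fixed hyperplanes of the reflection generators of $\Gamma$, each of which is a union of $(n-1)$-faces of $\cT^0$; hence the branch set of $f$ lies in the $(n-1)$-skeleton of $\cD^0$. By the cellular Markov property, the forward orbit of this skeleton stays in the $(n-1)$-skeleton of $\cD^0$, which has only finitely many cells, so the postcritical set is finite in the combinatorial sense required by the definition of a Thurston-type map.

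The main technical obstacle I foresee is verifying that $\cD^0$ and $\cD^1$ really constitute cell decompositions of $\mfd^n$ in the precise sense used in the paper, particularly along the branch locus of $h$, where several orthotope cells are identified by reflections in $\Gamma$. The reflection combinatorics at codimension-two and higher strata of $R$ (its edges, corners, and their $\Gamma$-orbits) must be unpacked carefully to confirm that the attaching maps of the pushed-forward cells are well defined and that adjacencies transfer correctly through $h$; one must also check that $A$ respects these identifications so that $\cD^1$ genuinely refines $\cD^0$ as a cell complex. Once this piece of bookkeeping is in place, the Markov property and the Thurston-type conclusion follow formally from the commutative diagram in the definition of a quasiregular Latt\`es map.
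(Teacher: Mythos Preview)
Your proposal has a genuine gap: the pushforwards under $h$ of the orthotope faces do not form a cell decomposition of $\mfd^n$. Take the simplest orthotopic example, $\Gamma=\Z^n$ acting by translations with fundamental domain $R=\prod_i[0,a_i]$ and $\mfd^n=\mathbb{T}^n$. Then $h(R)=\mathbb{T}^n$ is the whole torus, not a cell; each $(n-1)$-face of $R$ maps onto an $(n-1)$-subtorus, again not a cell; and in general $h$ fails to be injective on every positive-dimensional face of $R$. The same obstruction arises for every orthotopic crystallographic group, since the boundary identifications of a normal fundamental domain always glue faces nontrivially. Your reading of ``orthotopic'' is also off: the paper only requires that $\Gamma$ admit an orthotope as a normal fundamental domain, not that $\Gamma$ be generated by reflections in its faces, and the pure translation group $\Z^n$ already shows that no reflections need be present. (The form $A=\lambda I$ with $\lambda\in\N$ is part of the definition, not something to be derived.)

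The paper's fix is to pass to a much finer \emph{symmetric decomposition} $\cK_n(Q)$ of the orthotope into simplices, built inductively as cones from the center of each face over the symmetric decomposition of its boundary. The crucial technical lemma (Lemma~\ref{l: rectangle Lattes: r(x) in H => r(x)=x}) shows that any cube symmetry $\gamma\in\Sym(\stdCube^n)$ sending a point $x$ of a simplex $H\in\cK_n$ back into $H$ must fix $x$; combined with the fact that adjacency transformations of the fundamental domain restrict to cube symmetries on the relevant faces, this yields (Lemma~\ref{l: rectangle Lattes: H cap Gx = x}) that each simplex meets every $\Gamma$-orbit in at most one point. Hence $h$ is injective on each simplex, and the images $\cD_0=\{h(c):c\in\cK_n(Q)\}$ form an honest cell decomposition of $\mfd^n$. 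The refinement $\cD_1$ comes from subdividing $Q$ into $\lambda^n$ congruent sub-orthotopes and taking the symmetric decomposition of each (the construction $\cK_{n,\lambda}$); one must then check that $\cK_{n,\lambda}$ really refines $\cK_n$, which is a separate combinatorial argument. Finally, the Thurston-type conclusion is not obtained by verifying Definition~\ref{d: CPCF} directly as you outline, but by invoking the general fact (Remark~\ref{r: Markov => Thurston-type}) that cellular Markov branched covers are automatically CPCF.
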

We give a formal definition of orthotopic Latt\`es maps in Section~\ref{sct: rectangle-Lattes}. Heuristically, an orthotopic Latt\`es map is a Latt\`es map where the group $\Gamma$ has a fundamental domain in the shape of an orthotope. Although this class of Latt\`es maps has not been 
given a name in the UQR literature, 
many known examples of UQR maps on closed Riemannian manifolds are orthotopic; see e.g. \cite{AKP10} for a discussion on such examples.

We recall the definitions of cellular Markov maps and Thurston-type maps in Section~\ref{sct: prelim}, as well as briefly discuss their relation; see also \cite[Chapter~5]{BM17} and~\cite[Section~1]{LPZ25}. Since the theory of Thurston-type maps is beyond the scope of this article, we will not include a detailed discussion.

If an orthotopic Latt\`es map $f\:\mfd^n\to\mfd^n$, $n\ge 3$, is \emph{chaotic} (see Definition~\ref{d: Lattes triple}), then it is an expanding Thurston-type map, which naturally induces a class of metrics called \emph{visual metrics of $f$}. By \cite[Theorem~B]{LPZ25}, these visual metrics are quasisymmetrically equivalent to the Riemannian distance on $\mfd^n$. See~\cite[Sections~1--3 and Appendix~A.2]{LPZ25} (cf. \cite{BM17,HP09}) for a discussion on these notions.
\begin{cor*}
	Let $f\:(\mfd^n,\dg)\to(\mfd^n,\dg)$, $n\ge 3$, be a chaotic orthotopic Latt\`es map on a {\clcnR} $n$-manifold $(\mfd^n,\dg)$. Then each visual metric of $f$ is quasisymmetrically equivalent to $\dg$.
\end{cor*}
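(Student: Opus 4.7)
The plan is to deduce the corollary by stringing together Theorem~A with the quasisymmetric uniformization statement \cite[Theorem~B]{LPZ25}, so the proof will be short but I need to verify each hypothesis of those two inputs. First I would unwind the definition of a chaotic orthotopic Latt\`es map (Definition~\ref{d: Lattes triple}): the ``chaotic'' hypothesis should force the conformal affine expansion $A\:\R^n\to\R^n$ to be genuinely expanding (linear part with all eigenvalues of modulus $>1$), and in particular $\abs{\det A}\geq 2$, which in turn forces the topological degree of $f$ to be $\ge 2$. With $\deg f\ge 2$, the second assertion of Theorem~A gives that $f$ is a Thurston-type map, and the first assertion (already without the degree hypothesis) gives that $f$ is a cellular Markov map.

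Next I would check that $f$ is \emph{expanding} in the sense required by \cite[Theorem~B]{LPZ25}. This is the step that deserves some care: expansion for a Thurston-type map is phrased in terms of the diameters of cells in iterated pullbacks of a cellular Markov partition going to zero. Since $f$ is orthotopic and chaotic, the cellular structure furnished by Theorem~A is lifted, via the automorphic map $h\:\R^n\to\mfd^n$, from the orthotopic tiling of $\R^n$ preserved by $\Gamma$; iterating $f$ corresponds under $h$ to iterating $A$. Because $A$ uniformly contracts the orthotopes at its inverse rate and $h$ is locally bi-Lipschitz off the branch set, the diameters (in $\dg$) of $n$-cells in the $k$-th pullback partition decay like $\norm{A^{-1}}^k$, which gives expansion. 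So the paragraph preceding the corollary in the excerpt actually asserts this, and my job is just to supply the verification.

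With $f$ an expanding Thurston-type map on the {\clcnR} manifold $(\mfd^n,\dg)$, each visual metric of $f$ is defined, and \cite[Theorem~B]{LPZ25} applies verbatim to yield a quasisymmetric homeomorphism between $(\mfd^n,\dg)$ and $(\mfd^n,\varrho)$ for any visual metric $\varrho$, which is exactly the desired conclusion.

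The only genuine obstacle I foresee is bookkeeping at the first step: confirming that ``chaotic orthotopic Latt\`es map'' implies both $\deg f\ge 2$ and the metric-expansion condition used in \cite{LPZ25}. Both should be essentially immediate from the definitions to be introduced in Section~\ref{sct: rectangle-Lattes}, but they are the technical hinges, and I would state them as short standalone lemmas rather than folding them into the one-line deduction, so that the corollary's proof reduces to a clean citation of Theorem~A and \cite[Theorem~B]{LPZ25}.
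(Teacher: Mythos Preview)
Your proposal is correct and follows essentially the same route as the paper: the paper's ``proof'' is just the paragraph preceding the corollary, which asserts that a chaotic orthotopic Latt\`es map is an expanding Thurston-type map (via Theorem~A and the chaotic hypothesis) and then invokes \cite[Theorem~B]{LPZ25}. Your plan simply makes explicit the two bookkeeping checks (degree $\ge 2$ from $\lambda\in\N$, $\lambda>1$, and metric expansion from the shrinking of pulled-back cells under $A^{-1}$) that the paper leaves implicit.
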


Finally, we note that it is not known, to our knowledge, whether all Latt\`es maps on $\mfd^n$ with dimension $n \geq 3$ are Thurston-type maps, or even have cellular branch sets. For general UQR maps, this is not the case. Indeed, by a result of Martin and Peltonen \cite{MarPe10}, the branch set of a quasiregular map on $\Sp^n$ is realizable as the branch set of a uniformly quasiregular map on $\Sp^n$,
and by the construction of Heinonen and Rickman \cite{HR98} there exist quasiregular maps $\Sp^3 \to \Sp^3$ without cellular branch sets.

\medskip
\noindent{\bf Acknowledgements.} 
The authors would like to express their sincere gratitude to  
Pekka~Pankka for helpful discussions. 
Z.~Li and H.~Zheng were partially supported by Beijing Natural Science Foundation (JQ25001, 1214021) and National Natural Science Foundation of China (12471083, 12101017, 12090010, and 12090015).

\section{Preliminaries}\label{sct: prelim}
\subsection{Quasiregular maps and Latt\`es maps}\label{subsct: prelim on BC}
Recall that a continuous map $f\colon \mfd^n \to \cN^n$ between oriented Riemannian $n$-manifolds is \defn{quasiregular} if $f$ is in the Sobolev space $W^{1,n}_{\loc}(\mfd^n,\cN^n)$ and there exists $K \geq 1$ for which the distortion inequality
\begin{equation}	\label{eq:K}
	\norm{Df}^n  \leq  K J_f \quad \text{a.e.\ }\mfd^n
\end{equation}
holds, where $\norm{Df}$ and $J_f$ are the operator norm and the Jacobian, respectively, of the differential $Df$ of $f$. In this case, we say that $f$ is \defn{$K$-quasiregular}. In particular, a holomorphic map of one complex variable is a $1$-quasiregular map. In this terminology, a map is \defn{quasiconformal} if it is a quasiregular homeomorphism. 
We refer to the monographs of Rickman \cite{Ri93}, Reshetnyak \cite{Re89}, and Iwaniec \& Martin \cite{IM01} for detailed expositions on quasiregular theory.

Following Iwaniec \& Martin \cite{IM96} (see also \cite{IM01}), a quasiregular map $f\colon \mfd^n\to\mfd^n$ of an oriented Riemannian $n$-manifold is \defn{uniformly quasiregular} (abbreviated as \defn{UQR}) if there exists $K \geq  1$ for which each iterate $f^k$, $k\geq 1$, of $f$ is $K$-quasiregular.
\footnote{Note that the uniform quasiregularity of a map is independent of the choice of the Riemannian metric of $\mfd^n$, since, for Riemannian metrics $g$ and $g'$ on $\mfd^n$, the identity map $\id \colon (\mfd^n,g) \to (\mfd^n,g')$ of a closed Riemannian $n$-manifold is quasiconformal.}
See a survey of Martin \cite{Mar14} for a detailed discussion.

An important subclass of uniformly quasiregular maps are the quasiregular Latt\`es maps originating from the work of Mayer~\cite{May97,May98}.
For the convenience of the reader, we recall the notion of Latt\`es maps (cf.~\cite{IM01,Ka22,May97} for more information).
\begin{definition}\label{d: Lattes triple}
	Let $\mfd^n$, $n\ge 3$, be an {\orclcnR} $n$-manifold. A triple $(\Gamma,h,A)$ is a \defn{Latt\`es triple} on $\mfd^n$ if the following conditions are satisfied: 
	\begin{enumerate}[label=(\roman*),font=\rm]
		\smallskip
		\item $\Gamma$ is a discrete subgroup of the isometry group $\Isom(\R^n)$.
		\smallskip
		\item $h\:\R^n\to\mfd^n$ is a quasiregular map which is strongly automorphic with respect to $\Gamma$, i.e., for all $x,\,y\in\R^n$, $h(x)=h(y)$ if and only if $y=\gamma(x)$ for some $\gamma\in\Gamma$.
		\smallskip
		\item $A$ is a conformal affine map (i.e.,~$A=\lambda U+v$ for some $\lambda>0$, $U\in \operatorname{O}(n)$, and $v\in\R^n$) with $A\Gamma A^{-1}\subseteq\Gamma$.
	\end{enumerate}
	A \defn{Latt\`es map} is a uniformly quasiregular map $f\:\mfd^n\to\mfd^n$ for which there exists a Latt\`es triple $(\Gamma,h,A)$ with
	$f\circ h=h\circ A$.
	In this case, we call $f$ a \defn{Latt\`es map with respect to $(\Gamma,h,A)$}.
	We call a Latt\`es map with respect to a Latt\`es triple $(\Gamma,h,A)$ \emph{chaotic} if $\Gamma$ is cocompact and $A$ is expanding, i.e., $A=\lambda U+v$ for some $\lambda>1$, $U\in \operatorname{O}(n)$, and $v\in\R^n$.
\end{definition}

\subsection{Cellular Markov maps}\label{subsct: cell decomp}
Here we recall the notion of cell decompositions. Our definitions of cells and cell decompositions follow \cite[Chapter~5]{BM17}. Throughout the remaining part of this section, $\fX$ will always be a locally-compact Hausdorff space. 

Recall that, for each $n\in\N$, a subset $c$ of $\fX$ homeomorphic to $[0,1]^n$ is called an \defn{$n$-dimensional cell}, and $\dim(c)\=n$ is called the dimension of $c$. We denote by $\cellbound c$ the set of points corresponding to $[0,1]^n\smallsetminus (0,1)^n$ under a homeomorphism between $c$ and $[0,1]^n$. We call $\cellbound c$ the \defn{cell-boundary} and $\stdCellint{c}\=c\smallsetminus\cellbound c$ the \defn{cell-interior} of $c$. The sets $\cellbound c$ and $\stdCellint{c}$ are independent of the choice of the homeomorphism. Note that the cell-boundary and cell-interior generally do not agree with the boundary $\partial c$ and interior $\stdInt{c}$ of $c$ regarded as a subset of the topological space $\fX$. A $0$-dimensional cell is a subset consisting of a single point in $\fX$. For a $0$-dimensional cell $c$, we set $\cellbound c\=\emptyset$ and $\stdCellint{c}\=c$.

We recall some basic properties of cells for further discussion, omitting the standard proofs. 

\begin{lemma}\label{l: properties of cells}
	The following statements are true:
	\begin{enumerate}[label=(\roman*),font=\rm]
    	\smallskip
		\item Let $\fX$ be a locally-compact Hausdorff space. Then each cell $c\subseteq\fX$ is closed. Moreover, $c=\cls{\stdCellint{c}}$.
		\smallskip
		\item Let $\fX$ be a topological $n$-manifold. Then for each $n$-dimensional cell $X$ in $\fX$, $\stdCellint{X}$ and $\cellbound X$ agree with the interior and boundary of $X$ regarded as a subset of the topological manifold $\fX$, respectively.
	\end{enumerate}
\end{lemma}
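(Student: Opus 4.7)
The plan is to extract both parts from the model homeomorphism $\varphi\colon[0,1]^{\dim c}\to c$ together with general position arguments for manifolds (invariance of domain for part (ii)).

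For part (i), I would first note that if $c$ is $0$-dimensional the statement is trivial, so assume $c$ is $n$-dimensional with $n\ge 1$. Fix a homeomorphism $\varphi\colon[0,1]^n\to c$. Since $[0,1]^n$ is compact, $c=\varphi([0,1]^n)$ is a compact subset of $\fX$, hence closed in the Hausdorff space $\fX$. To get $c=\cls{\stdCellint{c}}$, I would observe that because $c$ is closed in $\fX$, the closure in $\fX$ of any subset of $c$ agrees with the closure taken inside $c$. Then under the homeomorphism $\varphi$ one has
\[
	\cls{\stdCellint{c}}^{\fX}=\cls{\varphi((0,1)^n)}^{c}=\varphi\bigl(\cls{(0,1)^n}^{[0,1]^n}\bigr)=\varphi([0,1]^n)=c,
\]
using the elementary fact $\cls{(0,1)^n}=[0,1]^n$ and that homeomorphisms commute with closures.

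For part (ii), let $X$ be an $n$-dimensional cell in the topological $n$-manifold $\fX$ with homeomorphism $\varphi\colon[0,1]^n\to X$. The forward inclusion $\stdCellint{X}\subseteq\stdInt{X}$ follows from invariance of domain: $\varphi$ restricts to a continuous injection $(0,1)^n\hookrightarrow\fX$ between $n$-manifolds without boundary, so its image $\stdCellint{X}$ is open in $\fX$ and thus lies in $\stdInt{X}$.

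The main step is the reverse inclusion, which I would prove by showing $\cellbound X\cap\stdInt{X}=\emptyset$. Suppose for contradiction that some $p\in\cellbound X$ lies in $\stdInt{X}$. Choose a chart neighborhood $N\subseteq\stdInt{X}\subseteq X$ of $p$ with $N$ homeomorphic to $\R^n$. Then $U\=\varphi^{-1}(N)$ is an open subset of $[0,1]^n$ containing the boundary point $q\=\varphi^{-1}(p)\in[0,1]^n\smallsetminus(0,1)^n$, and $U$ is homeomorphic to $\R^n$. Viewing $U\subseteq[0,1]^n\subseteq\R^n$, invariance of domain applied to the inclusion $U\hookrightarrow\R^n$ implies that $U$ is open in $\R^n$. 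But any subset of $\R^n$ that contains $q$ and is open in $\R^n$ must also contain points outside $[0,1]^n$, contradicting $U\subseteq[0,1]^n$. Hence $\cellbound X\subseteq\partial X$, which combined with $\stdCellint{X}\subseteq\stdInt{X}$ and the partition $X=\stdCellint{X}\sqcup\cellbound X=\stdInt{X}\sqcup\partial X$ (the latter using that $X$ is closed by part (i)) yields equality in both.

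The only genuine obstacle is the reverse inclusion in (ii); everything else is formal. The key tool there is invariance of domain, applied once to promote $\stdCellint{X}$ to an open subset of $\fX$, and once more to derive the contradiction from a hypothetical manifold-interior point lying on the cell-boundary.
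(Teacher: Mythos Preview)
Your argument is correct in both parts. The paper itself omits the proof entirely, stating only that these are ``basic properties of cells'' with ``standard proofs,'' so there is no paper proof to compare against; your use of compactness for (i) and invariance of domain (twice) for (ii) is exactly the standard route one would expect.
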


\begin{definition}[Cell decompositions]\label{d: cell decomposition}
A collection $\cD$ of cells in a locally-compact Hausdorff space $\fX$ is a \defn{cell decomposition of $\fX$} 
if the following conditions are satisfied:
    \begin{enumerate}[label=(\roman*),font=\rm]
    	\smallskip
        \item The union of all cells in $\cD$ is equal to $\fX$.
        \smallskip
        \item $\brCellint{\sigma}\cap\brCellint{\tau}=\emptyset$ for all distinct $\sigma,\,\tau\in\cD$.
        \smallskip
        \item For each $\tau\in\cD$, the cell-boundary $\cellbound\tau$ is a union of cells in $\cD$.
        \smallskip
        \item Every point in $\fX$ has a neighborhood that meets only finitely many cells in $\cD$.
    \end{enumerate}
\end{definition}

For a collection $\cC$ of cells in an ambient space $\fX$, we denote by $\abs{\cC}\=\bigcup\cC$ the \defn{space} of $\cC$. If $\cC$ is a cell decomposition of $\abs{\cC}$, then we call $\cC$ a \defn{cell complex}. A cell decomposition is a cell complex.
A subset $\cC'\subseteq\cC$ is a \defn{subcomplex} of a cell complex $\cC$ if $\cC'$ is a cell complex.

Let $\cD$ be a cell decomposition of $\fX$.
For each $S \subseteq \fX$, we denote 
\begin{equation}\label{e: restriction of cell decomp}
\cD|_{S}\=\{c \in\cD : c \subseteq S \},
\end{equation}
and call it the \defn{restriction of $\cD$ on $S$}. 
For each $k\in\N_0$, we denote
 \begin{equation*}
 \cD^{[k]}\=\{c\in\cD:\dim(c)=k\},
 \end{equation*} 
 and call $\Absbig{\sk{\cD}{k}}$ the \defn{$k$-skeleton of $\cD$}.

We record some elementary properties of cell decompositions.

\begin{lemma}[{\cite[Lemmas~5.2 and~5.3]{BM17}}]\label{l: properties of cell decompositions}
	Let $\cD$ be a cell decomposition of $\fX$. Then the following statements are true:
	\begin{enumerate}[label=(\roman*),font=\rm]
    	\smallskip
		\item For each $k\in\N_0$, the $k$-skeleton of $\cD$ is equal to $\bigcup\{\stdCellint{c}:c\in\cD,\,\dim(c) \leq  k\}$.
		\smallskip
		\item $\fX=\bigcup\{\stdCellint{c}:c\in\cD\}$.
		\smallskip
		\item For each $\tau\in\cD$, we have $\tau=\bigcup\{\stdCellint{c}:c\in\cD,\,c\subseteq\tau\}$.
		\smallskip
		\item If $\sigma$ and $\tau$ are two distinct cells in $\cD$ with $\sigma\cap\tau\neq\emptyset$, then one of the following three statements is true: $\sigma\subseteq\cellbound\tau$, $\tau\subseteq\cellbound\sigma$, or $\sigma\cap\tau=\cellbound\sigma\cap\cellbound\tau$ and the intersection consists of cells in $\cD$ of dimension strictly less than $\min\{\dim(\sigma),\dim(\tau)\}$.
		\smallskip
		\item If $\sigma,\,\tau_1,\,\dots,\,\tau_k$ are cells in $\cD$ and $\brCellint{\sigma}\cap\bigcup_{j=1}^k\tau_j\neq\emptyset$, then $\sigma\subseteq\tau_i$ for some $i\in\{1,\,\dots,\,k\}$.
	\end{enumerate}
\end{lemma}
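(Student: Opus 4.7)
The plan is to establish part~(iii) first as the workhorse statement, deduce (i) and (ii) as immediate corollaries, and then use (iii) together with axiom~(ii) of Definition~\ref{d: cell decomposition} to handle (iv) and (v). The only place where topological input beyond the axioms is needed is a dimension-theoretic \emph{incomparability claim} used in (iv), which I expect to be the main obstacle.

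For (iii), I would argue by induction on $\dim(\tau)$. The case $\dim(\tau)=0$ is trivial since $\tau=\stdCellint{\tau}$. For $\dim(\tau)=k\ge 1$, write $\tau=\stdCellint{\tau}\cup\cellbound{\tau}$; axiom~(iii) of Definition~\ref{d: cell decomposition} expresses $\cellbound{\tau}$ as a union of cells in $\cD$, each of dimension at most $k-1$ and contained in $\tau$, so the inductive hypothesis rewrites each such cell as a union of cell-interiors of cells contained in $\tau$. Parts~(i) and (ii) then follow at once: (ii) is (iii) applied to every cell together with axiom~(i) of Definition~\ref{d: cell decomposition}; and since the $k$-skeleton is, by definition, the union of cells of dimension at most $k$, (iii) expresses it as the union of cell-interiors claimed in~(i).

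For (iv), the crucial auxiliary step is the \emph{incomparability claim}: if $c,\,c'\in\cD$ with $c\subseteq c'$ and $\dim c=\dim c'$, then $c=c'$. Granting this, take distinct $\sigma,\,\tau\in\cD$ with $\sigma\cap\tau\ne\emptyset$. By (iii) together with axiom~(ii) of Definition~\ref{d: cell decomposition}, every point of $\sigma\cap\tau$ lies in $\stdCellint{c}$ for some common cell $c\in\cD$ with $c\subseteq\sigma\cap\tau$. If some such $c$ satisfies $\dim c=\dim\sigma$, then the incomparability claim forces $c=\sigma$, hence $\sigma\subseteq\tau$; axiom~(ii) of Definition~\ref{d: cell decomposition} then gives $\stdCellint{\sigma}\subseteq\cellbound{\tau}$, and taking closures via Lemma~\ref{l: properties of cells}(i) (noting that $\cellbound{\tau}$ is closed in $\tau$, hence in $\fX$) yields $\sigma\subseteq\cellbound{\tau}$. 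The case $\dim c=\dim\tau$ is symmetric. Otherwise every common cell satisfies $\dim c<\min\{\dim\sigma,\dim\tau\}$, and the same disjointness-and-closure argument gives $c\subseteq\cellbound{\sigma}\cap\cellbound{\tau}$, producing the third alternative.

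The incomparability claim is the only genuine obstacle, since it alone leaves the purely combinatorial framework of Definition~\ref{d: cell decomposition}. I would prove it by contradiction: if $c\subsetneq c'$ with $\dim c=\dim c'=k$, then axiom~(ii) of Definition~\ref{d: cell decomposition} forces $\stdCellint{c}\subseteq\cellbound{c'}$, but $\stdCellint{c}$ is homeomorphic to $(0,1)^k$ while $\cellbound{c'}$ is homeomorphic to the $(k-1)$-sphere $\partial[0,1]^k$, contradicting invariance of domain. Finally, (v) is immediate: pick $x\in\stdCellint{\sigma}\cap\bigcup_{j}\tau_j$ and an index $i$ with $x\in\tau_i$; by~(iii) applied to $\tau_i$, we have $x\in\stdCellint{c}$ for some cell $c\subseteq\tau_i$, and axiom~(ii) of Definition~\ref{d: cell decomposition} forces $c=\sigma$, whence $\sigma\subseteq\tau_i$.
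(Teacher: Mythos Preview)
The paper does not supply its own proof of this lemma; it simply records the statement with a citation to \cite[Lemmas~5.2 and~5.3]{BM17}. Your argument is correct and is essentially the standard one found there: prove (iii) by induction on dimension, read off (i) and (ii), and then combine (iii) with the disjointness axiom to obtain (iv) and (v).

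One small point worth tightening: in the induction step for (iii) you assert that the cells making up $\cellbound\tau$ have dimension at most $k-1$. That is true, but it is not purely combinatorial---it uses the same dimension-theoretic input (a $k$-cell cannot embed in a $(k-1)$-sphere) that you later isolate as the ``incomparability claim''. So the topological input enters in two places, not one; since you already have the tool in hand, this is a matter of bookkeeping rather than a gap.
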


\begin{lemma}[{\cite[Lemma~2.10]{LPZ25}}]\label{l: cell decomp: restrict}
     Let $\cD$ be a cell decomposition of a locally-compact Hausdorff space $\fX$ and $S\subseteq\fX$. Then the following statements are true:
     \begin{enumerate}[label=(\roman*),font=\rm]
        \smallskip
         \item  $\cD|_S$ is a subcomplex of $\cD$.
         \smallskip
         \item If $S$ is a union of cells in $\cD$, then $\cD|_S$ is a cell decomposition of $S$. In particular, for each $c\in\cD$, $\cD|_c$ and $\cD|_{\cellbound c}$ are cell decompositions of $c$ and $\cellbound c$, respectively.
         \smallskip
         \item If $S=\abs{\cC}$ for a subcomplex $\cC$ of $\cD$, then $\cD|_S=\cC$. 
     \end{enumerate}
 \end{lemma}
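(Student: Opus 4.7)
The lemma is a direct verification against Definition~\ref{d: cell decomposition} together with the elementary facts recorded in Lemma~\ref{l: properties of cells} and Lemma~\ref{l: properties of cell decompositions}. The plan is to treat the three parts in order, reusing the axioms of $\cD$ at each step.

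For (i), the goal is to show $\cD|_S$ is a cell decomposition of the set $\abs{\cD|_S}$, which by definition is $\bigcup_{c\in\cD,\,c\subseteq S}c$. First I would check that axioms (i), (ii), and (iv) of Definition~\ref{d: cell decomposition} hold automatically, since $\cD|_S\subseteq\cD$: the union of all cells is $\abs{\cD|_S}$ by definition; interiors of distinct cells in $\cD|_S$ are disjoint because they are so in $\cD$; and local finiteness is inherited. The one step requiring a small argument is axiom (iii): if $\tau\in\cD|_S$, then by the corresponding axiom for $\cD$, $\cellbound\tau$ is a union of cells of $\cD$; each such cell lies in $\tau\subseteq S$, hence lies in $\cD|_S$. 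This gives $\cD|_S$ the structure of a cell decomposition of its space, i.e., a subcomplex of $\cD$.

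For (ii), the only additional thing to verify, given (i), is that $\abs{\cD|_S}=S$. The inclusion $\abs{\cD|_S}\subseteq S$ is immediate from the definition \eqref{e: restriction of cell decomp}. Conversely, if $S=\bigcup_{i\in I}c_i$ with each $c_i\in\cD$, then each $c_i$ lies in $\cD|_S$, so $S\subseteq\abs{\cD|_S}$. The ``in particular'' assertion then follows by applying this to $S=c$ (a single cell in $\cD$) and to $S=\cellbound c$ (which is a union of cells in $\cD$ by axiom (iii) of Definition~\ref{d: cell decomposition} applied to $\cD$).

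For (iii), I would show both inclusions. The inclusion $\cC\subseteq\cD|_S$ is straightforward: every $c\in\cC$ lies in $\cD$ (since $\cC$ is a subcomplex of $\cD$) and satisfies $c\subseteq\abs{\cC}=S$, so $c\in\cD|_S$. For the reverse inclusion, take $\tau\in\cD|_S$ and pick any $x\in\brCellint{\tau}\subseteq S=\abs{\cC}$. By Lemma~\ref{l: properties of cell decompositions}(ii) applied to the cell decomposition $\cC$ of its own space, $x$ lies in $\brCellint{c}$ for a unique $c\in\cC$. Since both $\tau$ and $c$ belong to $\cD$ and their cell-interiors share the point $x$, axiom (ii) of Definition~\ref{d: cell decomposition} forces $\tau=c\in\cC$. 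Hence $\cD|_S=\cC$. The main thing to be careful about is not to confuse the cell-interior $\brCellint{\cdot}$ (which is intrinsic to the cell) with the topological interior in $\fX$; disjointness of cell-interiors in $\cD$ is what makes the last step work, and there is no real obstacle beyond bookkeeping.
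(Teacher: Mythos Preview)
The paper does not supply its own proof of this lemma; it is quoted verbatim from \cite[Lemma~2.10]{LPZ25} and used as a black box, so there is nothing in the present paper to compare your argument against. That said, your verification is correct and is the standard one: parts (i) and (ii) are immediate from the axioms, and your proof of (iii) via Lemma~\ref{l: properties of cell decompositions}(ii) and the disjointness of cell-interiors in $\cD$ is exactly the right idea. One small point you glide over in (i) is that Definition~\ref{d: cell decomposition} presupposes the ambient space is locally compact Hausdorff, so strictly speaking you should observe that $\abs{\cD|_S}$ is closed in $\fX$ (by local finiteness of $\cD$ it is locally a finite union of closed cells) and hence inherits local compactness; this is routine and does not affect the argument.
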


We also recall the notion of \emph{refinements of cell decompositions}, and record several properties.
\begin{definition}[Refinements]\label{d: refinement}
A cell decomposition $\cD_1$ of a locally-compact Hausdorff space $\fX$ is a \defn{refinement} of a cell decomposition $\cD_0$ of $\fX$ if the following conditions are satisfied:
\begin{enumerate}[label=(\roman*),font=\rm]
	\smallskip
    \item For each $\sigma\in\cD_1$, there exists $\tau\in\cD_0$ satisfying $\sigma\subseteq\tau$.
    \smallskip
    \item Each cell $\tau\in\cD_0$ is the union of cells $\sigma\in\cD_1$ satisfying $\sigma\subseteq\tau$.
\end{enumerate}
In this case, we also say that $\cD_1$ \defn{refines $\cD_0$}.
\end{definition}

\begin{lemma}[{\cite[Lemma~5.7]{BM17}}]\label{l: cell decomp: refinement inte() contained in inte()}
	Let $\cD$ be a cell decomposition of a locally-compact Hausdorff space $\fX$ and let $\cD'$ be a refinement of $\cD$. Then for each $\sigma\in\cD'$, there exists a minimal cell $\tau\in\cD$ with $\sigma\subseteq\tau$, i.e., if $\sigma\subseteq\ttau$ for some $\ttau\in\cD$, then $\tau\subseteq\ttau$. Moreover, $\tau$ is the unique cell in $\cD$ with $\stdCellint{\sigma}\subseteq\stdCellint{\tau}$.
\end{lemma}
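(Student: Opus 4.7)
The plan is to produce $\tau$ by pointwise tracking: fix a point in $\stdCellint{\sigma}$, locate the unique cell of $\cD$ whose cell-interior contains it, and show that this cell has all the required properties. The main work is to verify that $\sigma$ actually sits inside this candidate $\tau$, since the only a priori containment we get from Definition~\ref{d: refinement} is $\sigma\subseteq\tau_0$ for \emph{some} $\tau_0\in\cD$, which need not be minimal.

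First, I would fix any point $p\in\stdCellint{\sigma}$, which is nonempty since $\sigma$ is a cell. Applying Lemma~\ref{l: properties of cell decompositions}(ii) to the cell decomposition $\cD$, together with condition~(ii) in Definition~\ref{d: cell decomposition} (which ensures disjointness of cell-interiors), yields a unique $\tau\in\cD$ with $p\in\stdCellint{\tau}$. This $\tau$ is my candidate.

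Next, to show $\sigma\subseteq\tau$, I would invoke condition~(ii) in Definition~\ref{d: refinement}, which expresses $\tau$ as the union of those cells $\sigma'\in\cD'$ with $\sigma'\subseteq\tau$. Since $p\in\tau$, there exists such a $\sigma'\in\cD'$ with $p\in\sigma'$. Now both $\sigma$ and $\sigma'$ lie in the cell complex $\cD'$, and $p\in\stdCellint{\sigma}\cap\sigma'$, so Lemma~\ref{l: properties of cell decompositions}(v) (applied to $\cD'$ with a single cell $\sigma'$) forces $\sigma\subseteq\sigma'\subseteq\tau$. This is the step where both cell-decomposition structures interact and is the main point of the argument.

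Finally, I would handle minimality and uniqueness. For minimality, suppose $\sigma\subseteq\ttau$ for some $\ttau\in\cD$. Then $p\in\ttau$, and applying Lemma~\ref{l: properties of cell decompositions}(iii) to $\ttau$ gives some $c\in\cD$ with $c\subseteq\ttau$ and $p\in\stdCellint{c}$; uniqueness of $\tau$ forces $c=\tau$, hence $\tau\subseteq\ttau$. For the interior statement, I would rerun the construction with an arbitrary $q\in\stdCellint{\sigma}$ in place of $p$: this produces the (unique) minimal cell of $\cD$ containing $\sigma$, which by the minimality just established must again be $\tau$, and its defining property is $q\in\stdCellint{\tau}$; hence $\stdCellint{\sigma}\subseteq\stdCellint{\tau}$. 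Uniqueness of $\tau$ as a cell of $\cD$ with $\stdCellint{\sigma}\subseteq\stdCellint{\tau}$ is then immediate from the disjointness of cell-interiors in $\cD$.
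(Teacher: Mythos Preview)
Your proof is correct. Note, however, that the paper does not supply its own proof of this lemma: it is quoted from \cite[Lemma~5.7]{BM17} and used as a black box, so there is no in-paper argument to compare against. Your approach---locating $\tau$ as the unique cell of $\cD$ whose cell-interior contains a chosen point of $\stdCellint{\sigma}$, then invoking Lemma~\ref{l: properties of cell decompositions}(v) to get $\sigma\subseteq\tau$ and Lemma~\ref{l: properties of cell decompositions}(iii) for minimality---is essentially the standard argument given in \cite{BM17}.
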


\begin{lemma}[{\cite[Lemma~2.17]{LPZ25}}]\label{l: construct refinement}
	Let $\cD$ be a cell decomposition of a locally-compact Hausdorff space $\fX$. For each $c\in\cD$, let $\cD'(c)$ be a cell decomposition of $c$ that refines $\cD|_c$. Then $\cD'\=\bigcup_{c\in\cD}\cD'(c)$ is a cell decomposition that refines $\cD$ if and only if for all $c,\,\sigma\in\cD$ with $c\subseteq\sigma$, we have $\cD'(\sigma)|_c=\cD'(c)$.
\end{lemma}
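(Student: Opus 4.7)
The plan is to prove both implications of the biconditional. The key insight for the forward direction is that if $\cD' = \bigcup_{c \in \cD}\cD'(c)$ is a cell decomposition of $\fX$ refining $\cD$, then necessarily $\cD'|_c = \cD'(c)$ for every $c \in \cD$, after which the compatibility identity follows by a set-theoretic manipulation. This identification rests on a general principle that I would establish first: whenever $\cC_1 \subseteq \cC_2$ are both cell decompositions of the same subset $Y \subseteq \fX$, one has $\cC_1 = \cC_2$. Indeed, for any $\tau_2 \in \cC_2$, pick $y \in \stdCellint{\tau_2}$, which is nonempty since $\tau_2$ is homeomorphic to some $[0,1]^k$; by Lemma~\ref{l: properties of cell decompositions}(i)--(ii) applied to $\cC_1$ there exists a unique $\tau_1 \in \cC_1$ with $y \in \stdCellint{\tau_1}$, and since $\tau_1 \in \cC_2$ as well, uniqueness in $\cC_2$ forces $\tau_1 = \tau_2 \in \cC_1$.

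For the forward direction, since $\cD'$ refines $\cD$ each $c \in \cD$ is a union of cells of $\cD'$, so by Lemma~\ref{l: cell decomp: restrict}(ii) the restriction $\cD'|_c$ is a cell decomposition of $c$. By construction $\cD'(c) \subseteq \cD'|_c$, so the general principle above yields $\cD'(c) = \cD'|_c$. Then for $c \subseteq \sigma$ in $\cD$,
\begin{equation*}
    \cD'(\sigma)|_c = \{\tau \in \cD'|_\sigma : \tau \subseteq c\} = \{\tau \in \cD' : \tau \subseteq c\} = \cD'|_c = \cD'(c),
\end{equation*}
where the second equality uses $c \subseteq \sigma$.

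For the backward direction, I would assume the compatibility condition and verify the four axioms of Definition~\ref{d: cell decomposition} for $\cD'$ together with the two conditions of Definition~\ref{d: refinement}. The cover axiom is immediate from $\fX = \bigcup_{c\in\cD} c$ combined with $c = \bigcup_{\tau\in\cD'(c)}\tau$. The cell-boundary axiom holds because each $\tau \in \cD'(c)$ has $\cellbound \tau$ equal to a union of cells in $\cD'(c) \subseteq \cD'$. Local finiteness follows from local finiteness of $\cD$ together with the fact that each $c \in \cD$ is compact, so each $\cD'(c)$ is finite (a locally finite cover of a compact space is finite). The two refinement conditions are built in: each $\tau \in \cD'$ lies in some $c \in \cD$, and $c = \bigcup_{\tau \in \cD'(c)} \tau$.

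The main obstacle is the disjointness-of-cell-interiors axiom, which is the one place that genuinely uses the compatibility hypothesis. Suppose $\tau_i \in \cD'(c_i)$ for $i = 1,\,2$ and that there exists $x \in \stdCellint{\tau_1} \cap \stdCellint{\tau_2}$. Applying Lemma~\ref{l: cell decomp: refinement inte() contained in inte()} to the refinement $\cD'(c_i)$ of $\cD|_{c_i}$ yields a unique cell $c_i' \in \cD$ with $\stdCellint{\tau_i} \subseteq \stdCellint{c_i'}$. Disjointness of cell-interiors in $\cD$ combined with $x \in \stdCellint{c_1'} \cap \stdCellint{c_2'}$ gives $c_1' = c_2' =: c'$. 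By Lemma~\ref{l: properties of cells}(i), $\tau_i = \cls{\stdCellint{\tau_i}} \subseteq \cls{\stdCellint{c'}} = c'$. The compatibility condition applied to $c' \subseteq c_i$ then gives $\tau_i \in \cD'(c_i)|_{c'} = \cD'(c')$, and disjointness of cell-interiors inside the cell decomposition $\cD'(c')$ forces $\tau_1 = \tau_2$, completing the proof.
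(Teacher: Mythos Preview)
Your proof is correct. Note, however, that the paper does not give its own proof of this lemma: it is stated with a citation to \cite[Lemma~2.17]{LPZ25} and left unproved, so there is no in-paper argument to compare against. Your argument is self-contained and uses only the lemmas recorded in the preliminaries (Lemmas~\ref{l: properties of cells}, \ref{l: properties of cell decompositions}, \ref{l: cell decomp: restrict}, and \ref{l: cell decomp: refinement inte() contained in inte()}), so it would serve as a complete substitute for the external reference. The one step worth making a shade more explicit is local finiteness in the backward direction: you should spell out that if a neighborhood $U$ meets $\tau\in\cD'(c)$ then $\tau\subseteq c$ forces $U\cap c\neq\emptyset$, so $\tau$ lies in one of the finitely many $\cD'(c_i)$ with $c_i\cap U\neq\emptyset$; this is implicit in what you wrote but deserves a sentence.
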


Now we recall cellular maps and related notions; see~\cite[Chapter~5]{BM17} or \cite[Section~2]{LPZ25} for more information.
In what follows, $\fX$ and $\fX'$ stand for locally-compact Hausdorff spaces.

\begin{definition}[Cellular maps]\label{d: cellular map}
	A continuous map $f\:\fX'\to\fX$ is a \defn{cellular map} if there exist cell decompositions $\cD'$ and $\cD$ of $\fX'$ and $\fX$, respectively, such that for each $c\in\cD'$, $f(c)$ is a cell in $\cD$ and the restriction $f|_{c}\:c\to f(c)$ is a homeomorphism. In this case, we call $f$ a \defn{$(\cD',\cD)$-cellular map}, $(\cD',\cD)$ a \defn{{\celPair}} of $f$, and $\cD'$ a \defn{pullback} of $\cD$ by $f$.
\end{definition}

Note that the pullback $\cD'$ of cell decomposition $\cD$, if exists, is uniquely determined by $\cD$ (see \cite[Subsection~2.3]{LPZ25} for a discussion), although neither each cell decomposition admits a pullback, nor {\celPair}s in Definition~\ref{d: cellular map} are unique.

Homeomorphisms are cellular maps: under a homeomorphism, each cell decomposition admits a unique pullback. For a homeomorphism  $\phi\:\fX'\to\fX$ and a cell decomposition $\cD$ of $\fX$, denote
\begin{equation}\label{e: homeo pullback}
	\phi^*(\cD)\=\bigl\{\phi^{-1}(c):c\in\cD\bigr\}.
\end{equation}
Then it is easy to check that $\phi^*(\cD)$ is a cell decomposition.
\begin{lemma}\label{l: cellular: cellular homeo}
	If $\phi\:\fX'\to\fX$ is a homeomorphism and $\cD$ is a cell decomposition of $\fX$, then $\phi^*(\cD)\=\bigl\{\phi^{-1}(c):c\in\cD\bigr\}$ is a cell decomposition, and the unique pullback of $\cD$.
	In particular, given a $(\cD',\cD)$-cellular map $f\colon\fX'\to\fX$, for each $c'\in \cD'$, we have $\cD'|_{c'}=(f|_{c'})^*\bigl(\cD|_{f(c')}\bigr)$. 
\end{lemma}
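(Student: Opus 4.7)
The plan is straightforward: verify that $\phi^*(\cD)$ satisfies the four axioms of a cell decomposition in Definition~\ref{d: cell decomposition}, then invoke the (already-noted) uniqueness of pullbacks, and finally apply the first half of the lemma to the homeomorphism $f|_{c'}$.

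First, I would observe that for each cell $c\in\cD$, the set $\phi^{-1}(c)$ is a cell of the same dimension as $c$, because $\phi^{-1}|_c\:c\to\phi^{-1}(c)$ is a homeomorphism and $c$ is homeomorphic to $[0,1]^{\dim c}$. Moreover, since cell-interior and cell-boundary are topological invariants of the cell, we have the transfer identities $\phi^{-1}(\cellbound c)=\cellbound(\phi^{-1}(c))$ and $\phi^{-1}(\stdCellint c)=\stdCellint(\phi^{-1}(c))$. From these identities the four axioms for $\phi^*(\cD)$ follow by direct translation from the corresponding axioms for $\cD$: (i) $\bigcup\phi^*(\cD)=\phi^{-1}(\fX)=\fX'$; (ii) $\stdCellint(\phi^{-1}(c_1))\cap\stdCellint(\phi^{-1}(c_2))=\phi^{-1}(\stdCellint c_1\cap\stdCellint c_2)=\emptyset$ when $c_1\neq c_2$; (iii) $\cellbound(\phi^{-1}(\tau))=\phi^{-1}(\cellbound\tau)$ is a union of $\phi$-preimages of cells in $\cD$, hence a union of cells in $\phi^*(\cD)$; and (iv) local finiteness is preserved because $\phi$ and $\phi^{-1}$ are continuous (given $x'\in\fX'$, pull back a good neighborhood of $\phi(x')$).

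Next, by construction $\phi$ sends each cell $\phi^{-1}(c)\in\phi^*(\cD)$ homeomorphically onto the cell $c\in\cD$, so $\phi$ is a $(\phi^*(\cD),\cD)$-cellular map. The uniqueness of pullbacks for cell decompositions (as recalled in the paragraph immediately following Definition~\ref{d: cellular map}) then yields that $\phi^*(\cD)$ is the unique pullback of $\cD$ by $\phi$.

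For the ``in particular'' part, fix a $(\cD',\cD)$-cellular map $f\:\fX'\to\fX$ and a cell $c'\in\cD'$. Then $f(c')\in\cD$ and $f|_{c'}\:c'\to f(c')$ is a homeomorphism. Because $c'$ is a union of cells of $\cD'$ (by Lemma~\ref{l: properties of cell decompositions}\,(iii)) and $f(c')$ is a union of cells of $\cD$ for the same reason, Lemma~\ref{l: cell decomp: restrict}\,(ii) guarantees that $\cD'|_{c'}$ and $\cD|_{f(c')}$ are cell decompositions of $c'$ and $f(c')$, respectively. I would then check that $f|_{c'}$ is a $(\cD'|_{c'},\cD|_{f(c')})$-cellular map: for each $d'\in\cD'|_{c'}$, the set $f(d')$ is a cell in $\cD$ with $f(d')\subseteq f(c')$, hence lies in $\cD|_{f(c')}$, and $f|_{d'}\:d'\to f(d')$ is already a homeomorphism by the cellularity of $f$. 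Applying the first part to the homeomorphism $f|_{c'}$ identifies the unique pullback of $\cD|_{f(c')}$ as $(f|_{c'})^*(\cD|_{f(c')})$, and the uniqueness then forces $\cD'|_{c'}=(f|_{c'})^*(\cD|_{f(c')})$.

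The proof involves no genuine obstacle; the only point that requires a little care is the verification that $\cD'|_{c'}$ really is a pullback of $\cD|_{f(c')}$ under $f|_{c'}$, which hinges on the fact that cellularity of $f$ on $\cD'$ restricts cleanly to cellularity on the sub-decomposition $\cD'|_{c'}$—this is immediate from $d'\subseteq c'\Rightarrow f(d')\subseteq f(c')$ together with Lemma~\ref{l: cell decomp: restrict}.
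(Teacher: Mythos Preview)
The paper does not actually give a proof of this lemma: it merely remarks before the statement that ``it is easy to check that $\phi^*(\cD)$ is a cell decomposition'' and leaves the rest implicit. Your proposal is correct and supplies precisely the routine verification the paper omits---checking the four axioms via the transfer identities $\phi^{-1}(\cellbound c)=\cellbound(\phi^{-1}(c))$ and $\phi^{-1}(\stdCellint c)=\stdCellint(\phi^{-1}(c))$, invoking the already-stated uniqueness of pullbacks, and then restricting to $c'$ for the ``in particular'' clause.
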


We conclude with the definition of the aforementioned cellular Markov maps and Thurston-type maps.
\begin{definition}[Cellular Markov partitions, cellular Markov maps]\label{d: cellular Markov}
	A \defn{cellular Markov partition} of a continuous map $f\:\fX\to\fX$ is a {\celPair} $(\cD',\cD)$ of $f$ where $\cD'$ is a refinement of $\cD$. We call $f$ a \defn{cellular Markov map} if there exists a cellular Markov partition of $f$.
\end{definition}

In this article, we adopt the formulation in \cite{HR02} and say that a discrete, open, and continuous map $f\colon \fX\to \fY$ between topological spaces $\fX$ and $\fY$ is a \defn{branched cover}. Note that quasiregular maps are orientation-preserving branched covers (see e.g.\ \cite[Chapter~VI]{Ri93} or \cite[Section~11]{Ka21}). For a branched cover $f\:\fX\to\fY$, we denote the \defn{branch set} (or \defn{critical set}) and \defn{postbranch set} (or \defn{postcritical set}), respectively, by
\begin{equation*}
    B_f\=\{x\in\fX:f\mbox{ is not a local homeomorphism at }x\}\quad\mbox{and}\quad P_f\=\bigcup_{m\geq 1}f^m(B_f).
\end{equation*}
We denote the \defn{local multiplicity} of $f$ at $x\in\fX$ by 
\begin{equation*}
\ind(x,f)\=\inf\{N(f,U):U\subseteq\fX\mbox{ is an open neighborhood of }x\},	
\end{equation*}
where $N(f,U)\=\sup\{\card\bigl(f^{-1}(y)\cap U\bigr):y\in f(U)\}$

\begin{definition}[Thurston-type maps]\label{d: CPCF}
    A branched cover $f\:\mfd^n\to\mfd^n$ is \defn{cellularly-postcritically-finite (CPCF)} if the following conditions hold:
    \begin{enumerate}[label=(\roman*),font=\rm]
        \smallskip
        \item There exists a cellular Markov partition $(\cP_0,\cP)$ of $f|_{P_f}\:P_f\to P_f$ and a cell decomposition $\cD_0$ of $\mfd^n$ such that $\cP_0\subseteq\cD_0$.
        \smallskip
        \item There exists a cell decomposition $\cB$ of $B_f$ such that $f|_{B_f}\:B_f\to P_f$ is $(\cB,\cP)$-cellular, and that for each $c\in\cB$, the function $i(f,\cdot)|_{\stdCellint{c}}\:\stdCellint{c}\to\N,\,x\mapsto i(x,f)$ is constant.
    \end{enumerate}
    A \defn{Thurston-type map} is a CPCF branched cover with topological degree at least two.
\end{definition}

\begin{rem}\label{r: Markov => Thurston-type}
    We note that cellular Markov branched covers are CPCF (see~\cite[Proposition~3.21]{LPZ25}), and are of Thurston-type when having topological degrees at least two; cf.~\cite[Section~3]{LPZ25} for a detailed discussion.
\end{rem}

\section{Orthotopic Latt\`es maps}\label{sct: rectangle-Lattes}

The main purpose of this article is to show that orthotopic Latt\`es maps are cellular Markov.

\begin{theorem}\label{t: rectangle=>Markov}
	An orthotopic Latt\`es map $\mfd^n\to\mfd^n$, $n\ge 3$, on a {\clcnR} $n$-manifold is a cellular Markov map.
\end{theorem}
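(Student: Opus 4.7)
My plan is to construct an explicit cellular Markov partition $(\cD',\cD)$ of $f$ by pulling back the natural product cell structure of the orthotope fundamental domain of $\Gamma$ through the semi-conjugacy $h$, and using $A^{-1}$ to produce the refinement. Fix a Latt\`es triple $(\Gamma,h,A)$ with $f\circ h=h\circ A$, and let $F\subseteq\R^n$ be the orthotope fundamental domain of $\Gamma$. Endow $F$ with its natural product cell decomposition $\cE_0$ from the factorization $F=[a_1,b_1]\times\cdots\times[a_n,b_n]$: one $n$-cell, $2n$ facets, and so on down to $2^n$ vertices. Set $\cE\=\{\gamma(c):\gamma\in\Gamma,\,c\in\cE_0\}$, the $\Gamma$-orbit of $\cE_0$.

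First I will verify that $\cE$ is a $\Gamma$-invariant cell decomposition of $\R^n$. Local finiteness and interior-disjointness follow from $F$ being a fundamental domain of the discrete isometry group $\Gamma$. The delicate point is that adjacent translates $\gamma_1(F)$ and $\gamma_2(F)$ meet along a common face of each; this will follow from the orthotopic hypothesis on $\Gamma$, which (as spelled out in Section~\ref{sct: rectangle-Lattes}) forces face identifications to be induced by face-stabilizing elements of $\Gamma$. I will then push $\cE$ down via $h$. Because $h$ is strongly automorphic with respect to $\Gamma$, cells in distinct $\Gamma$-orbits have disjoint $h$-images, and the branch locus $B_h$ is contained in the union of fixed-point sets of nontrivial elements of $\Gamma$, which lies inside the codimension-$1$ skeleton of $\cE$. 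Hence $h$ restricts to a homeomorphism on each cell of $\cE$, and $\cD\=\{h(c):c\in\cE\}$ is a cell decomposition of $\mfd^n$ in the sense of Definition~\ref{d: cell decomposition}, with the cell properties verified via Lemma~\ref{l: properties of cell decompositions}.

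Next I will construct the refinement from $A$. Since $A$ is a conformal affine map with $A\Gamma A^{-1}\subseteq\Gamma$, the image $A(F)$ is a fundamental domain for $A\Gamma A^{-1}\le\Gamma$, and the orthotopic hypothesis will imply that $A(F)$ is a face-to-face union of $\Gamma$-translates of $F$. Equivalently, $\cE'\=A^{-1}(\cE)$ refines $\cE$ in the sense of Definition~\ref{d: refinement}: each cell of $\cE'$ is contained in a unique cell of $\cE$, and each cell of $\cE$ is the union of the cells of $\cE'$ that it contains. Pushing down, $\cD'\=\{h(c'):c'\in\cE'\}$ is a cell decomposition of $\mfd^n$ refining $\cD$; the refinement conditions descend from $\cE'$ to $\cD'$ via $h$, using Lemma~\ref{l: cell decomp: refinement inte() contained in inte()} and the $\Gamma$-equivariance of the whole setup.

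Finally, from $f\circ h=h\circ A$ I will conclude that $f$ is $(\cD',\cD)$-cellular: for each $c'=h(\tc')\in\cD'$ with $\tc'\in\cE'$, the image $A(\tc')$ is a cell in $\cE$, so $f(c')=h(A(\tc'))$ is a cell in $\cD$, and $f|_{c'}$ is a homeomorphism onto $f(c')$ as a composition of homeomorphisms. Combined with the refinement $\cD'$ of $\cD$, this produces the cellular Markov partition of $f$ demanded by Definition~\ref{d: cellular Markov}. The main obstacle I anticipate is the ``face-to-face'' alignment in the third paragraph: verifying rigorously, from the precise orthotopic definition, that the $\Gamma$-tiling by $F$ and the dilation by $A$ truly respect the cell structure rather than partially crossing faces. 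This turns on how the orthogonal part and translation vector of $A$ interact with $F$ and $\Gamma$, and is where the specific orthotopic hypothesis on $(\Gamma,h,A)$ will do the real work.
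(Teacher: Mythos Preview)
There is a genuine gap: your claim that ``$h$ restricts to a homeomorphism on each cell of $\cE$'' is false for the product cell structure $\cE_0=\Rec_n(F)$, and your justification via the branch locus is a non-sequitur. Take the simplest orthotopic example $\Gamma=\Z^n$ acting on $\R^n$ with fundamental orthotope $F=[0,1]^n$. Here $\Gamma$ acts freely, so $B_h=\emptyset$; yet $h$ identifies opposite faces of $F$, so $h(F)=\mathbb{T}^n$ is not a cell, and $h$ is not injective on any facet either (e.g.\ $h(\{0\}\times[0,1]^{n-1})$ is an $(n-1)$-torus). Thus your $\cD=\{h(c):c\in\cE\}$ is not even a collection of cells, let alone a cell decomposition. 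The branch set controls where $h$ fails to be a \emph{local} homeomorphism; it says nothing about global injectivity on a cell, which is what you need.

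This is exactly why the paper does \emph{not} use $\Rec_n(Q)$. It introduces the much finer \emph{symmetric decomposition} $\cK_n(Q)$ (Section~3.2), subdividing $Q$ into simplices cut out by the hyperplanes $ax_i=bx_j$. The point of this subdivision is Lemma~\ref{l: rectangle Lattes: r(x) in H => r(x)=x} and its consequence Lemma~\ref{l: rectangle Lattes: H cap Gx = x}: each simplex of $\cK_n(Q)$ meets every $\Gamma$-orbit in at most one point, so $h$ \emph{is} injective on these cells. The adjacency transformations of an orthotopic $\Gamma$ act on shared faces as $Q$-symmetries, and $\cK_n(Q)$ is designed to be invariant under all such symmetries while being fine enough that no nontrivial symmetry moves a point of a simplex to another point of the same simplex. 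Your product structure has neither property. To repair your argument you would have to replace $\cE_0$ by a decomposition with this orbit-separation property; that is the substantive content of Section~3.2, and it is not a step you can skip.
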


Consequently, since Latt\`es maps are branched covers and by Remark~\ref{r: Markov => Thurston-type}, orthotopic Latt\`es maps with degree at least two are Thurston-type maps. Then Theorem~\ref{tx: orthotopic Lattes} follows.

Now, we proceed to the definition of orthotopic Latt\`es maps and discuss first the underlying notion of orthotopic crystallographic groups.

\subsection{Orthotopic crystallographic groups and Latt\`es triples}
{In this subsection, we fix an arbitrary $n\in\N$ and use the convention that all discussions take place in the ambient space $\R^n$.}

First, following~\cite{SV93}, we recall some basic notions on crystallographic groups and Euclidean geometry of $\R^n$. A discrete subgroup $\Gamma$ of $\Isom(\R^n)$ is called \defn{crystallographic} if $\Gamma$ acts cocompactly on $\R^n$, i.e., the quotient space $\R^n/\Gamma$ is compact. A \defn{fundamental domain} of a discrete subgroup $\Gamma<\Isom(\R^n)$ is a closed domain (i.e.,~the closure of some domain) $D\subseteq\R^n$ satisfying the following properties:
\begin{enumerate}[label=(\roman*),font=\rm]
	\smallskip
	\item $\{\gamma(D):\gamma\in\Gamma\}$ is a locally-finite cover of $\R^n$.
	\smallskip
	\item If $\gamma\in\Gamma$ and $\gamma\neq\id$, then $\stdInt{(D)}\cap\stdInt{(\gamma(D))}=\emptyset$.
\end{enumerate}

Note that for a crystallographic group $\Gamma$ there always exists a \defn{normal fundamental polyhedron}, i.e., a fundamental domain $P$ that is a convex polyhedron, and the intersection of two adjacent polyhedra in $\{\gamma(P):\gamma\in\Gamma\}$ is a common face of them. Examples of polyhedra include simplices, cubes, orthotopes, etc. In the current section, we only consider orthotopes and do not discuss polyhedra in detail; cf.~\cite{SV93} for a more comprehensive discussion.

In what follows, for $d\in\N_0$ with $d\leqslant n$, we identify $\R^d$ with the subspace $\R^d\times\{0\}^{n-d}$ of $\R^n$. We also use the notation $0$ to denote the point $(0,\,\dots,\,0)\in\R^n$ when there is no ambiguity.

Fix $d\in\N$ with $d\le n$. We call a subset of $\R^d$, having the form $\prod_{i=1}^d[-a_i,a_i]$, $a_i\in(0,+\infty)$ for each $1\leq i\leq d$, a \defn{standard $d$-dimensional orthotope}.  
A subset $Q\subseteq\R^n$ is a \defn{$d$-dimensional orthotope}  if $Q$ is isometric to a standard $d$-dimensional orthotope. A \defn{$0$-dimensional orthotope} is a set containing a single point.

A special subclass of orthotopes are the \emph{cubes}. For $d\in\N$ with $d\le n$, a \defn{$d$-dimensional cube} is a $d$-dimensional orthotope isometric to a \defn{standard $d$-dimensional cube}, i.e., a standard orthotope of the form $[-a,a]^d$, $a\in(0,+\infty)$. 
Note that $C\subseteq\R^n$ is a $d$-dimensional cube if and only if $C$ is mapped by a conformal affine map (i.e.,~a map in the form $\lambda \id\circ E$, where $E\in\Isom(\R^n)$ and $\lambda>0$) to $\stdCube^d\=[-1,1]^d$. A \defn{$0$-dimensional cube} is a set containing a single point. 

For a standard orthotope $Q\=\prod_{i=1}^d[-a_i,a_i]$, we call the linear transformation
\begin{equation}\label{e: ort Latt: cubic stretching}
\pi\:\R^n\to\R^n, \quad (x_1,\,\dots,\,x_d,\,x_{d+1},\,\dots,\,x_n)\mapsto(x_1/a_1,\,\dots,\,x_d/a_d,\,x_{d+1},\,\dots,\,x_n)
\end{equation}
the \defn{cubic-stretching} of $Q$. Clearly $\pi(Q)=\stdCube^d$.

The \defn{orthotopic structure} of a standard orthotope $Q=\prod_{i=1}^d[-a_i,a_i]$ is the cell decomposition
\begin{equation*}
\Rec_d(Q)\=\prod_{i=1}^d\{[-a_i,a_i],\,\{-a_i\},\,\{a_i\}\}.
\end{equation*}
For a $d$-dimensional orthotope $R\subseteq\R^n$, the orthotopic structure is given by the pullback (cf.~Lemma~\ref{l: cellular: cellular homeo}) $\Rec_d(R)\=T^*\Rec_d(Q)$, where $T\in\Isom(\R^n)$ maps $R$ to a standard $d$-dimensional orthotope $Q$. Note that $\Rec_d(R)$ does not depend on the choice of $T$.
Furthermore, we denote
$\partial\Rec_d(R)\=\Rec_d(R)\smallsetminus\{R\}=\Rec_d(R)|_{\cellbound R}$.
It is easy to see that
$\partial \Rec_n(R)$ is a cell decomposition of $\cellbound R$.
We call an element in $\Rec_d(R)$ a \defn{face of $R$}, and call a face of dimension $d-1$ a \defn{facet}. 

In particular, we define the \defn{cubic structure} of $\stdCube^d$ by 
$
\Cube_d\=\Rec_d\bigl(\stdCube^d\bigr)=\{[-1,1],\,\{-1\},\,\{1\}\}^d,
$
and the cubic structure of a $d$-dimensional cube $C$ by $\Cube_d(C)\=L^*\Cube_d$, where $L$ is a conformal affine map that maps $C$ to $\stdCube^d$. We also denote $\partial\Cube_d(C)\=\Cube_d(C)\smallsetminus\{C\}$ and $\partial\Cube_d\=\partial\Rec_d\bigl(\stdCube^d\bigr)$.
Clearly, for a standard $d$-dimensional orthotope $Q$, the orthotopic structure $\Rec_d(Q)$ is identical to the pullback $\pi^*\Cube_d$ by the cubic stretching $\pi$ of $Q$.

In addition, for a $0$-dimensional orthotope (quivalently, $0$-dimensional cube) $P$, we define $\Rec_0(P)=\Cube_0(P)\=\{P\}$ and $\partial\Rec_0(P)=\partial\Cube_0(P)\=\emptyset$.

In \cite{Yu13}, a crystallographic group $\Gamma$ on $\R^n$ is called \defn{cubic-type} if normal fundamental domains of $\Gamma$ can be realized as $n$-dimensional cubes. 

Here we introduce a similar notion called \defn{orthotopic crystallographic group}. We call a crystallographic group $\Gamma$ \defn{orthotopic} if there exists an $n$-dimensional orthotope that is a normal fundamental domain of $\Gamma$. Examples of orthotopic crystallographic groups include $\Gamma_{\operatorname{tor}}\cong\Z^n$ and $\Gamma_{\operatorname{sph}}\cong \Z^n\rtimes\Z_2$ whose quotient spaces are $\mathbb{T}^n$ and $\Sp^n$, respectively.

With orthotopic crystallographic groups introduced, we define a special class of Latt\`es maps. Recall the definition of \emph{Latt\`es triples} and \emph{Latt\`es maps} in Definition~\ref{d: Lattes triple}.
\begin{definition}[Orthotopic Latt\`es maps]
	A Latt\`es triple $(\Gamma,h,A)$ is \defn{orthotopic} if $\Gamma$ is an orthotopic crystallographic group with a normal fundamental domain $Q=\prod_{i=1}^n[0,a_i]$, and $A=\lambda I$ where $\lambda\in\N$. An \defn{orthotopic Latt\`es map} is a Latt\`es map with respect to an orthotopic Latt\`es triple.
\end{definition}

For examples of orthotopic Latt\`es maps, see e.g.~Astola, Kangaslampi, and Peltonen \cite{AKP10}.

\subsection{Symmetry group and symmetric decomposition of cube} 
As in the previous subsection, we fix an arbitrary $n\in\N$ and use the convention that all discussions take place in the ambient space $\R^n$. We also identify $\R^d$, $0\le d\leq n$, with the subspace $\R^d\times\{0\}^{n-d}$ of $\R^n$.

We study an orthotopic crystallographic group $\Gamma<\Isom(\R^n)$ by decomposing its fundamental domain into some smaller polyhedra. We call such a decomposition a \emph{symmetric decomposition}. Then in Subsection~\ref{subsct: orthotopic Lattes are MKV}, we use symmetric decomposition to construct cellular Markov partitions for orthotopic Latt\`es maps.

First, we consider fundamental domains of cubic type. Then we naturally proceed to the orthotopic case, extending the results for cubic-type cases.

In preparation, we recall the notion of (cubic) symmetry.
For the $d$-dimensional cube $\stdCube^d\=[-1,1]^d\subseteq\R^d$ (where $\R^d$ is identified as the subspace $\R^d\times\{0\}^{n-d}$ of $\R^n$), a \defn{symmetry of $\stdCube^d$ (or $\stdCube^d$-symmetry)} is an isometry $\gamma\in \Isom(\R^n)$ with $\gamma\bigl(\stdCube^d\bigr)=\stdCube^d$. All symmetries of $\stdCube^d$ form a subgroup $\Sym\bigl(\stdCube^d\bigr)<\Isom(\R^n)$
called the \defn{symmetry group of $\stdCube^d$}. 

For each $i\in\{1,\,\dots,\,n\}$, denote by $e_i$ the point in $\R^n$ for which the $i$-th coordinate is $1$ and the others are $0$. Usually we do not distinguish the points $e_i$, $i\in\{1,\,\dots,\,n\}$, and the canonical orthogonal base $e_1,\,\dots,\,e_n$.

\begin{rem}\label{r: ort Latt: sym(I) = DE}
    As an elementary fact, for each $\gamma\in \Sym\bigl(\stdCube^d\bigr)$, we have $\gamma\bigl(\R^d\bigr)=\R^d$, and the restriction $\gamma|_{\R^d}$
    is an orthogonal linear map having the form $\gamma|_{\R^d}=DE$ where $D,\,E\in\operatorname{O}(d)$ satisfy $De_i\in\{-e_i,\,e_i\}$ for $1 \leq  i \leq  d$, and the restriction of $E$ on the base $e_1,\,\dots,\,e_d$ is a permutation, that is, there exists $\sigma\in S_d$ such that $Ee_i=e_{\sigma(i)}$, $1\le i\le d$.  
\end{rem}

In what follows, we introduce symmetric decompositions. Briefly speaking, we subdivide a cube using half-spaces; cf.~\cite[Section~5.3]{SV93} for a similar discussion.

Fix arbitrary $d \in\N$ with $d\leq n$. For all $i,\,j\in\{1,\,\dots,\,d\}$ and $a,\,b\in\{-1,\,1\}$, we denote
\begin{equation}\label{e: ort Latt: Hi and Hij}
	H^d_{ij}(a,b)\=\bigl\{(x_1,\,\dots,\,x_d)\in \stdCube^d:ax_i \geq  bx_j\bigr\}\subseteq\R^d.
\end{equation}
Note that $H^d_{ii}(a,b)=\bigl\{(x_1,\,\dots,\,x_d)\in \stdCube^d:(a-b)x_i \geq 0\bigr\}$ and $H^d_{ij}(a,b)=H^d_{ji}(-b,-a)$.
Set 
\begin{equation}\label{e: ort Latt: cH}
\cH^d\=\bigl\{H^d_{ij}(a,b):a,\,b\in\{-1,1\},\,1 \leq  i,\,j \leq  d\bigr\}.
\end{equation}
We call $\cH_*\subseteq\cH^d$ a \defn{{\fundSet}} of $\cH^d$ if, for all $1 \leq  i\leq j \leq  d$,
\begin{equation}\label{e: ort Latt: fund subset}
    \cH_*\cap\bigl\{H^d_{ij}(1,-1),\,H^d_{ij}(-1,1)\bigr\}\neq\emptyset\quad\text{and}\quad\cH_*\cap\bigl\{H^d_{ij}(1,1),\,H^d_{ij}(-1,-1)\bigr\}\neq\emptyset.
\end{equation}

Denote
\begin{equation}\label{e: ort Latt: definition Ko}
\Ko_d\=\bigl\{\bigcap\cH_*:\cH_*\text{ is a {\fundSet} of }\cH^d\bigr\}.
\end{equation}
For each $d$-dimensional cube $C\subseteq\R^n$ we denote $\Ko_d(C)\=L^*\Ko_d\=\bigl\{L^{-1}(H):H\in\Ko_d\bigr\}$, where $L$ is a conformal affine map with $L(C)=\stdCube^d$. In addition, we denote  $\Ko_0\=\{\{0\}\}$.

\begin{remark}
Note that $\Ko_d(C)$ is well defined, i.e., independent of the choice of $L$ (see Lemma~\ref{l: orth Latt: Ko is invariant}~(i)).
\end{remark}

Now we define symmetric decompositions of cubes. For a $d$-dimensional cube $C$ with $d\in\N$, we call
\begin{equation}\label{e: orth Latt: symmetric decomp}
	\cK_d(C)\=\bigcup_{c\in\Cube_d(C)}\Ko_{\dim(c)}(c)
    \quad\text{ and }\quad
    \partial\cK_d(C)\=\bigcup_{c\in\partial\Cube_d(C)}\Ko_{\dim(c)}(c)
\end{equation}
the \defn{symmetric decomposition} of $C$ and $\cellbound C$, respectively. In particular, we denote $\cK_d\=\cK_d\bigl(\stdCube^d\bigr)$ and $\partial\cK_d\=\partial\cK_d\bigl(\stdCube^d\bigr)$ for convenience. Moreover, define $\cK_0\=\{\{0\}\}$ and $\partial\cK_0=\emptyset$.
\begin{remark}
    Given a conformal affine map $L$ mapping $C$ to $\stdCube^d$, we have $\cK_d(C)=L^*\cK_d\=\{L^{-1}(H):H\in\cK_d\}$ (see Lemma~\ref{l: orth Latt: Ko is invariant}~(ii)).
\end{remark} 

\begin{figure}[h]
	\includegraphics[scale=0.65]{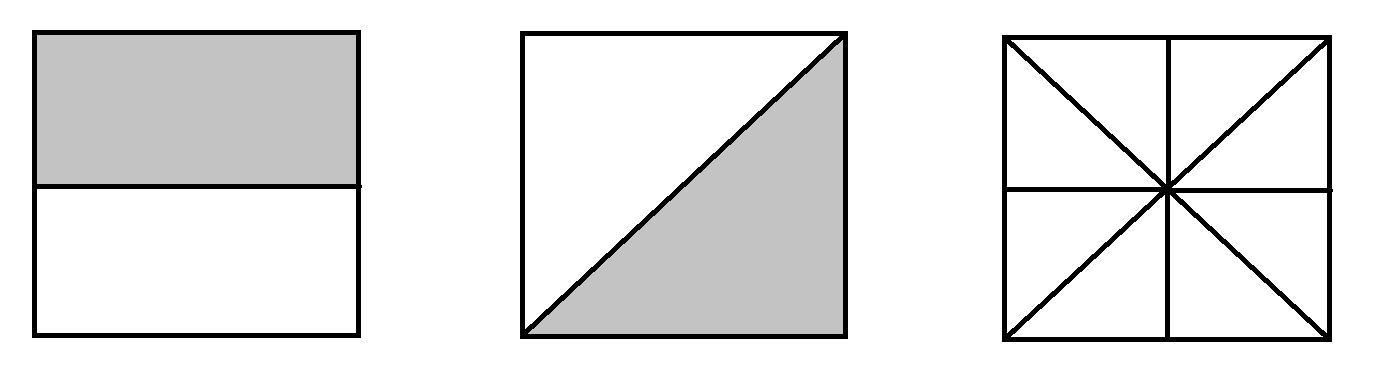}
	\caption{$H^{2}_{22}(1,-1)$, $H^2_{12}(1,1)$, and $\cK_{2}$.}
\end{figure}

The following lemma guarantees that symmetric decompositions are well defined.
\begin{lemma}\label{l: orth Latt: Ko is invariant}
    Let $d\in\N_0$ with $d\leq n$ be arbitrary. Then the following statements are true:
    \begin{enumerate}[label=(\roman*),font=\rm]
        \smallskip
        \item For each $T\in\Sym\bigl(\stdCube^d\bigr)$, $\Ko_d=T^*\Ko_d$. Moreover, for each $d$-dimensional cube $C$ and each pair of conformal affine maps $L,\,L'$ mapping $C$ onto $\stdCube^d$, we have $L^*\Ko_d=(L')^*\Ko_d=\Ko_d(C)$.
        \smallskip
        \item For each pair $C,\,C'$ of $d$-dimensional cubes and each conformal affine map $L$ mapping $C$ to $C'$, we have $\Ko_d(C)=L^*\Ko_d(C')$, $\cK_d(C)=L^*\cK_d(C')$, and $\partial\cK_d(C)=L^*\partial\cK_d(C')$.
    \end{enumerate}
\end{lemma}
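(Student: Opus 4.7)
The plan is to first establish (i) by analyzing the pullback action of $\Sym\bigl(\stdCube^d\bigr)$ on $\cH^d$, and then to derive (ii) by composition together with the lower-dimensional instances of (i). The cases $d=0$ are immediate since $\Ko_0=\{\{0\}\}$ is a singleton and any conformal affine map between $0$-dimensional cubes trivially respects it, so I focus on $d\ge 1$.

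For (i), fix $T\in\Sym\bigl(\stdCube^d\bigr)$. By Remark~\ref{r: ort Latt: sym(I) = DE}, the restriction $T|_{\R^d}$ has the form $DE$ with $De_k=\epsilon_k e_k$, $\epsilon_k\in\{-1,1\}$, and $Ee_k=e_{\sigma(k)}$ for some $\sigma\in S_d$. A direct coordinate computation then yields
\begin{equation*}
    T^{-1}\bigl(H^d_{ij}(a,b)\bigr) = H^d_{\sigma^{-1}(i),\,\sigma^{-1}(j)}(a\epsilon_i,\,b\epsilon_j),
\end{equation*}
so $T^*$ permutes $\cH^d$. Using the identity $H^d_{ij}(a,b)=H^d_{ji}(-b,-a)$, the two pairs appearing in \eqref{e: ort Latt: fund subset} are, for each $i\le j$, pairs of complementary closed half-spaces of $\stdCube^d$, and these pairs partition $\cH^d$. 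A brief sign-case analysis shows that $T^*$ permutes this partition: the ``difference'' pair $\{H^d_{ij}(1,-1),H^d_{ij}(-1,1)\}$ is sent to the analogous ``difference'' pair for indices $(\sigma^{-1}(i),\sigma^{-1}(j))$ when $\epsilon_i\epsilon_j=1$ and to the corresponding ``sum'' pair when $\epsilon_i\epsilon_j=-1$, with the symmetric statement for the other pair. Hence $T^*$ takes fundamental subsets to fundamental subsets and preserves intersection, so $T^*\Ko_d=\Ko_d$. For the second assertion of (i), if $L,\,L'\colon C\to\stdCube^d$ are both conformal affine, then $T\=L\circ(L')^{-1}\in\Sym\bigl(\stdCube^d\bigr)$, whence
\begin{equation*}
    L^*\Ko_d = (L')^*(T^*\Ko_d) = (L')^*\Ko_d = \Ko_d(C).
\end{equation*}

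For (ii), pick any conformal affine $L'$ sending $C'$ to $\stdCube^d$; then $L'\circ L$ sends $C$ to $\stdCube^d$, and (i) together with functoriality of pullback gives $\Ko_d(C)=(L'\circ L)^*\Ko_d=L^*((L')^*\Ko_d)=L^*\Ko_d(C')$. For the identity involving $\cK_d$, the conformal affine bijection $L$ sends each face $c\in\Cube_d(C)$ to a face $L(c)\in\Cube_d(C')$ of the same dimension, giving a bijection between $\Cube_d(C)$ and $\Cube_d(C')$, and the restriction of $L$ remains conformal affine on the affine hull of $c$. Applying the already-established first part of (ii) to each face $c$ yields $L^*\Ko_{\dim(c)}(L(c))=\Ko_{\dim(c)}(c)$; taking the union over all $c\in\Cube_d(C)$ produces $\cK_d(C)=L^*\cK_d(C')$, and restricting to proper faces gives the analogous identity for $\partial\cK_d$.

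The main obstacle is the sign-and-permutation bookkeeping in (i): one must verify that pullback by a signed permutation genuinely permutes the pair-partition of $\cH^d$ arising from \eqref{e: ort Latt: fund subset}. This becomes straightforward once the action formula above is written down, because complementarity of half-spaces is preserved by any isometry; the only real subtlety is that coordinate sign flips may swap the ``difference-type'' and ``sum-type'' labels of a pair, but the partition into unordered pairs is preserved either way, and this is all that the definition of a \fundSet\ requires.
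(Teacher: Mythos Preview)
Your proof is correct and follows essentially the same route as the paper: compute the pullback of $H^d_{ij}(a,b)$ under a signed permutation, conclude that fundamental subsets are preserved (the paper asserts this directly while you spell out the pair-partition analysis), then obtain the well-definedness of $\Ko_d(C)$ via $L\circ(L')^{-1}\in\Sym(\stdCube^d)$, and finally derive (ii) by composing with a chart to $\stdCube^d$ and applying the $\Ko$-identity face-wise to get the $\cK_d$ and $\partial\cK_d$ statements. One cosmetic remark: the complementary pairs do not literally \emph{partition} $\cH^d$ because of the redundancies $H^d_{ij}(a,b)=H^d_{ji}(-b,-a)$ and $H^d_{ii}(1,1)=H^d_{ii}(-1,-1)=\stdCube^d$, but this does not affect your argument since all you need is that $T^*$ carries fundamental subsets to fundamental subsets.
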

\begin{proof} 
	Clearly, the statements are true if $d=0$. Now suppose $d\in\N$.
	
	\smallskip
	
    (i) Since $T,\,T^{-1}\in\Sym\bigl(\stdCube^d\bigr)$, by Remark~\ref{r: ort Latt: sym(I) = DE}, we may assume $T^{-1}e_i=\delta_i e_{\sigma(i)}$, $\delta_i\in\{-1,\,1\}$, for each $1 \leq  i \leq  d$, where $\sigma\in S_d$ is a permutation. Then for all $i,\,j\in\{1,\,\dots,\,d\}$ and $a,\,b\in\{-1,\,1\}$,
    \begin{equation*}
    T^{-1}\bigl(H^d_{ij}(a,b)\bigr)=\bigl\{(x_1,\,\dots,\,x_d)\in \stdCube^d:ax_{\sigma(i)}/\delta_i \geq  bx_{\sigma(j)}/\delta_j\bigr\}
    =H^d_{\sigma(i)\sigma(j)}(a/\delta_i,b/\delta_j),
    \end{equation*}  
    which implies that for each {\fundSet} $\cH_*\subseteq\cH^d$ (cf.~(\ref{e: ort Latt: fund subset}) for definition), $T^*\cH_*\=\bigl\{T^{-1}(H_*):H_*\in\cH_*\bigr\}$ is a {\fundSet}. Thus, by the definition of $\Ko_d$ (cf.~(\ref{e: ort Latt: definition Ko})), for each $H\in\Ko_d$, $T^{-1}(H)\in\Ko_d$. It follows that $T^*\Ko_d\subseteq\Ko_d$. Likewise, $\Ko_d\subseteq T^*\Ko_d$. 

    Let $C$ be a $d$-dimensional cube and $L,\,L'$ be conformal affine maps that map $C$ to $\stdCube^d$. It is easy to see that $L'\circ L^{-1}\in\Sym\bigl(\stdCube^d\bigr)$, and thus, as shown above, $\bigl(L'\circ L^{-1}\bigr)^*\Ko_d=\Ko_d$. So $L^*\Ko_d=(L')^*\Ko_d=\Ko_d(C)$.

    \smallskip

    (ii) Let $C,\,C'$ be $d$-dimensional cubes and $L$ a conformal affine map with $L(C)=C'$. Let $P$ and $P'$ be conformal affine maps mapping $C$ and $C'$, respectively, to $\stdCube^d$. It is clear that $P'\circ L \circ P^{-1}\in\Sym\bigl(\stdCube^d\bigr)$. Thus, by (i), $\Ko_d(C)=P^*\Ko_d=P^*\bigl(P'\circ L \circ P^{-1}\bigr)^*\Ko_d=L^* (P')^*\Ko_d=L^*\Ko_d(C')$.
    
    Now consider $\partial\cK_d(C)$ and $\cK_d(C)$. 
    It is easy to verify $\Cube_d(C)=L^*\Cube_d(C')$ and $\partial\Cube_d(C)=L^*\partial\Cube_d(C')$. Combining this with the fact that $\Ko_d(C)=L^*\Ko_d(C')$ for arbitrary $d\in\N$ with $d\le n$ (as shown above) and the definition of symmetric decomposition in (\ref{e: orth Latt: symmetric decomp}), we obtain $\cK_d(C)=L^*\cK_d(C')$ and $\partial\cK_d(C)=L^*\partial\cK_d(C')$.
\end{proof}

We further discuss the structure of $\Ko_n$, giving some elementary properties.
\begin{lemma}\label{l: ort Latt: Ko Union and Intersect}
    The following statements are true:
    \begin{enumerate}[font=\rm,label=(\roman*)]
        \smallskip
        \item $\stdCube^n=\bigcup\Ko_n$ and $H_0=\bigcup\{\bigcap\cH_0:\cH_0\subseteq\cH^n\text{ is a {\fundSet} with } H_0\in\cH_0\}$ for each $H_0\in\cH^n$.
        \smallskip
        \item $H_1\cap H_2\in\Ko_n$ for all $H_1,\,H_2\in\Ko_n$.
    \end{enumerate} 
\end{lemma}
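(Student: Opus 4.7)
The plan is to reduce both parts to two elementary consequences of the definition~(\ref{e: ort Latt: fund subset}):
(a) any superset of a fundamental subset of $\cH^n$ is again fundamental, and
(b) the union of two fundamental subsets is fundamental.
Both follow at once from the observation that fundamentality demands only that $\cH_*$ have \emph{nonempty} intersection with each of the pairs $\{H^n_{ij}(1,-1),H^n_{ij}(-1,1)\}$ and $\{H^n_{ij}(1,1),H^n_{ij}(-1,-1)\}$ for $1\le i\le j\le n$, a property preserved under enlarging.

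For (i), the inclusion $\bigcup\Ko_n\subseteq\stdCube^n$ is immediate from~(\ref{e: ort Latt: Hi and Hij}): every member of $\cH^n$ lies in $\stdCube^n$ and every fundamental subset is nonempty. For the reverse inclusion, I would fix $x=(x_1,\dots,x_n)\in\stdCube^n$ and construct a fundamental subset $\cH_x$ by selecting, from each required pair, a half-space containing $x$; this is always possible because in each pair the union of the two members equals $\stdCube^n$ (they are the two sides, within $\stdCube^n$, of a coordinate hyperplane $x_i=0$ or a diagonal hyperplane $x_i\pm x_j=0$). Then $x\in\bigcap\cH_x\in\Ko_n$. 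The second identity of (i) is then handled by the same scheme: the inclusion $\supseteq$ is trivial since $H_0\in\cH_0$ forces $\bigcap\cH_0\subseteq H_0$; for $\subseteq$, given $H_0\in\cH^n$ and $x\in H_0$, I would start with $\{H_0\}$ and extend to a fundamental subset $\cH_0$ by greedily adjoining a containing half-space from each still-unmet pair, producing $\cH_0\ni H_0$ with $x\in\bigcap\cH_0$.

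For (ii), write $H_k=\bigcap\cH_k$ with $\cH_k\subseteq\cH^n$ fundamental for $k\in\{1,2\}$. Then $H_1\cap H_2=\bigcap(\cH_1\cup\cH_2)$, and observation (b) yields that $\cH_1\cup\cH_2$ is fundamental, so $H_1\cap H_2\in\Ko_n$. No step in this plan presents a substantial obstacle; the only point requiring a moment of care is the degenerate case $i=j$, where $H^n_{ii}(1,1)=H^n_{ii}(-1,-1)=\stdCube^n$ makes the corresponding difference-pair condition trivially satisfied, while the sum-pair reduces to $\{\{x_i\ge 0\},\{x_i\le 0\}\}$, which still admits a member containing any prescribed $x\in\stdCube^n$.
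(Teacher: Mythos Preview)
Your proposal is correct and follows essentially the same approach as the paper: for (i) you build a fundamental subset through $x$ by picking, from each required pair, a half-space containing $x$, and for the second identity you enlarge by $H_0$ (the paper phrases this as taking the already-found fundamental subset $\cH'_*$ and adjoining $H_0$, which is the same idea); for (ii) both you and the paper use that $\cH_1\cup\cH_2$ is fundamental. One small wording caution: when you say the $i=j$ difference-pair condition is ``trivially satisfied'', make clear you mean the task of finding a member containing $x$ is trivial (since both options equal $\stdCube^n$), not that the fundamentality requirement is vacuous---you still must include $\stdCube^n$ in $\cH_*$.
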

\begin{proof}
 (i) Let $x\=(x_1,\,\dots,\,x_n)\in \stdCube^n$ be arbitrary. By~(\ref{e: ort Latt: Hi and Hij}), for all $1\leq i\leq j\leq n$ and $\delta\in\{-1,\,1\}$, we can choose $a,\,b\in\{-1,\,1\}$ such that $ab=\delta$ and $x\in H^n_{ij}(a,b)$. This gives a {\fundSet} (cf.~(\ref{e: ort Latt: fund subset}) for definition) $\cH_*\subseteq\cH^n$ for which $x\in\bigcap\cH_*\in\Ko_n$. Thus, $\stdCube^n\subseteq\bigcup\Ko_n$. Clearly $\bigcup\Ko_n\subseteq \stdCube^n$.

Furthermore, fix arbitrary $H_0\in\cH^n$ and $x_0\in H_0$. As discussed above, there exists a {\fundSet} $\cH'_*\subseteq\cH^n$ for which $x_0\in\bigcap\cH'_*$. Then $\cH'_0\=\cH'_*\cup\{H_0\}$ is a {\fundSet} for which $x_0\in\bigcap\cH'_0$. It follows that $H_0\subseteq\bigcup\{\bigcap\cH_0:\cH_0\subseteq\cH^n\text{ is a {\fundSet} with } H_0\in\cH'_*\}$. The converse inclusion is clear.

\smallskip

(ii) Consider $H_1,\,H_2\in\Ko_n$. Suppose $H_1=\bigcap\cH_1$ and $H_2=\bigcap\cH_2$, where $\cH_1,\,\cH_2\subseteq\cH^n$ are {\fundSet}s. By the definition of {\fundSet}s (cf.~(\ref{e: ort Latt: fund subset})), $\cH_1\cup\cH_2$ is a {\fundSet}, and thus $H_1\cap H_2=\bigcap(\cH_1\cup\cH_2)\in\Ko_n$.
\end{proof}

\begin{figure}[h]
    \centering
    \includegraphics[width=0.65\linewidth]{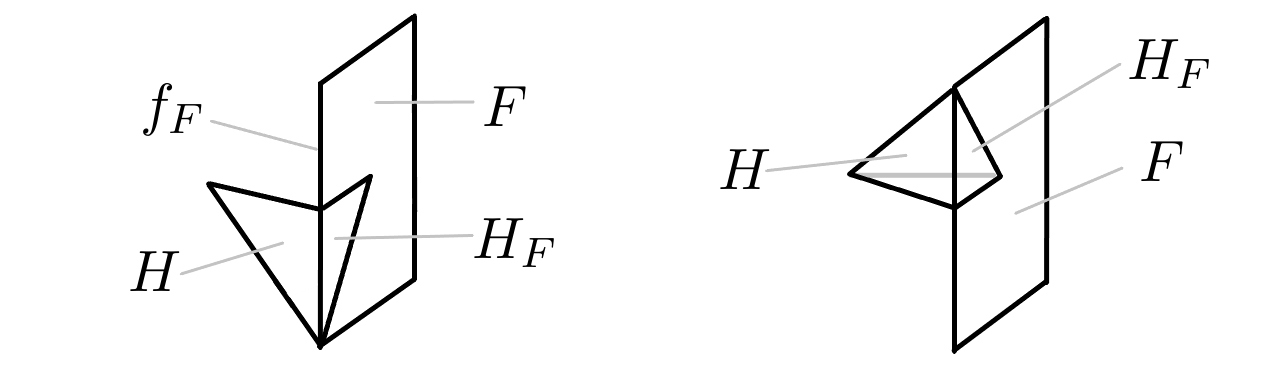}
    \caption{Illustration of Lemmas~\ref{l: orthotopic Lattes: property of K^o_n} (left) and~\ref{l: orthotopic Lattes: K_n-1 in dK_n} (right).}
\end{figure}

\begin{lemma}\label{l: orthotopic Lattes: property of K^o_n}
    Fix $H\in\Ko_n$. Then the following statements are true:
    \begin{enumerate}[label=(\roman*),font=\rm]
    	\smallskip
        \item There exists $T\in\Sym(\stdCube^n)$ such that $T(H)\subseteq[0,1]^n$.
        \smallskip
        \item If $H\cap\cellbound \stdCube^n\neq\emptyset$, then there exists a facet $F\in\Cube_n^{[n-1]}$ such that $H\cap\cellbound \stdCube^n\subseteq F$.
        \smallskip
        \item If $F\in \Cube_n^{[n-1]}$ satisfies $H\cap F\neq\emptyset$, then there exists $H_F\in\Ko_{n-1}(F)$ and $f_F\in \Cube_{n-1}(F)$ such that $H\cap F= H_F\cap f_F$.
    \end{enumerate}
\end{lemma}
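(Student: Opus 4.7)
The plan is to work directly with the defining half-space description: writing $H=\bigcap\cH_*$ with $\cH_*\subseteq\cH^n$ a fundamental subset, I exploit the natural split of the inequalities $H^n_{ij}(a,b)$ into ``sign'' conditions (when $i=j$, constraining a single coordinate) and ``comparison'' conditions (when $i<j$, constraining a sum or difference of two coordinates).

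For part~(i), the fundamental subset condition for each diagonal pair $(i,i)$ forces $\cH_*$ to contain $H^n_{ii}(1,-1)=\{x_i\geq 0\}$ or $H^n_{ii}(-1,1)=\{x_i\leq 0\}$; choosing signs $\epsilon_i\in\{-1,1\}$ accordingly and setting $T(x)=(\epsilon_1 x_1,\dots,\epsilon_n x_n)$ produces a symmetry in $\Sym(\stdCube^n)$ with $T(H)\subseteq[0,1]^n$. For part~(ii), I may assume $H\subseteq[0,1]^n$ after replacing $H$ by $T(H)\in\Ko_n$ using part~(i) and Lemma~\ref{l: orth Latt: Ko is invariant}(i). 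The comparison conditions then give, for each $i<j$, either $x_i\leq x_j$ or $x_i\geq x_j$ on $H$; this binary relation is total (by the fundamental subset condition) and transitive on $H$ (as pointwise inequalities compose), hence a total preorder. Taking a maximal index $i_*$ yields $x_{i_*}\geq x_j$ on $H$ for every $j$, so any $x\in H\cap\cellbound\stdCube^n$ has some coordinate equal to $\pm 1$ and (being in $[0,1]^n$) equal to $1$, forcing $x_{i_*}=1$. Thus $H\cap\cellbound\stdCube^n\subseteq\{x_{i_*}=1\}$, whose pullback under $T^{-1}$ is the desired facet.

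For part~(iii), I first use the transitive action of $\Sym(\stdCube^n)$ on facets, together with Lemma~\ref{l: orth Latt: Ko is invariant}, to reduce to $F=[-1,1]^{n-1}\times\{1\}$ identified with $\stdCube^{n-1}$; then I restrict each $H^n_{ij}(a,b)\in\cH_*$ to $F$ according to whether the index $n$ appears. When $i\leq j<n$, the restriction is exactly $H^{n-1}_{ij}(a,b)$ under the identification, and the collection $\cH_*^{<n}$ of such restrictions inherits the fundamental subset conditions for $\cH^{n-1}$, giving $H_F\in\Ko_{n-1}(F)$. When $i<n=j$ (or the symmetric case $i=n>j$), the condition $ax_i\geq b$ at $x_n=1$ is either trivial or forces $x_i\in\{-1,1\}$; intersecting all such restrictions yields a product $\prod_{i<n}I_i$ with each $I_i\in\{[-1,1],\{-1\},\{1\}\}$, giving $f_F\in\Cube_{n-1}(F)$. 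The diagonal case $i=j=n$ is trivial except when $(a,b)=(-1,1)$, which gives empty intersection; this possibility, along with any conflicting coordinate constraints on some $x_i$ with $i<n$, is excluded by the hypothesis $H\cap F\neq\emptyset$. Combining the three groups of restrictions yields $H\cap F=H_F\cap f_F$.

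I expect the main obstacle to be the bookkeeping in part~(iii): exhaustively cataloguing how each $H^n_{ij}(a,b)$ restricts to $F$ and verifying that the resulting conditions separate cleanly into a ``half-space part'' in $\Ko_{n-1}(F)$ and a ``coordinate-fixing part'' in $\Cube_{n-1}(F)$, relying throughout on the nonemptiness of $H\cap F$ to rule out the degenerate cases.
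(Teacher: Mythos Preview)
Your proposal is correct and follows essentially the same approach as the paper's proof: the same sign-flip symmetry in (i), the same reduction to $[0,1]^n$ followed by finding a dominating coordinate index in (ii), and the same split of the defining inequalities at $x_n=1$ into ``lower-dimensional half-space'' and ``face-fixing'' parts in (iii). The only cosmetic difference is in (ii), where you extract the dominating index via a total preorder on $\{1,\dots,n\}$ (defined by $i\preceq j$ iff $x_i\le x_j$ on $H$), whereas the paper encodes the same data as a tournament graph and locates a vertex reaching all others; the two formulations are equivalent, and your preorder phrasing is arguably cleaner.
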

\begin{proof}
    By the definition of $\Ko_n$ (cf.~(\ref{e: ort Latt: definition Ko})), suppose $H=\bigcap\cH_*$, where $\cH_*\subseteq\cH^n$ is a {\fundSet} (cf.~(\ref{e: ort Latt: fund subset}) for definition).

    \smallskip

    (i) By the definition of {\fundSet}s, for each $i\in\{1,\,\dots,\,n\}$, pick $H^n_{ii}(a_i,-a_i)\in\cH_*$, where $a_i\in\{-1,\,1\}$. Let $T\in\operatorname{O}(n)$ be such that $Te_i=a_ie_i$, $i\in\{1,\,\dots,\,n\}$. It can be checked that $T\in\Sym(\stdCube^n)$ and $T(H)\subseteq\bigcap_{i=1}^n T(H_{ii}^n(a_i,-a_i))\subseteq[0,1]^n$.

    \smallskip

    (ii) Since $\Ko_n$ is invariant under $\stdCube^n$-symmetries (Lemma~\ref{l: orth Latt: Ko is invariant}~(i)), we may assume by~(i) that $H\subseteq[0,1]^n$. 
    
	By the definition of {\fundSet}s, for all $1 \leq  i < j \leq  n$, we can choose $H^{n}_{ij}(a_{ij},a_{ij})\in\cH_*$, where $a_{ij}\in\{-1,\,1\}$. 

	We construct a directed graph $G=(V,E)$ with $n$ vertices $V=\{v_1,\,\dots,\,v_n\}$ by defining the set of edges $E=\{e_{ij}:1 \leq  i<j \leq  n\}$ as follows: for $1 \leq  i<j \leq  n$, if $a_{ij}=1$, then set $e_{ij}\=(v_i,v_j)$; otherwise, when $a_{ij}=-1$, set $e_{ij}\=(v_j,v_i)$.
	Using simple induction arguments, we can find an integer $1\le m\le n$ such that for each $v_i\in V$, there exists a path from $v_m$ to $v_i$. By the construction of $G$ and (\ref{e: ort Latt: Hi and Hij}), each $(x_1,\,\dots,\,x_n)\in \bigcap_{1\le i<j\le n}H^n_{ij}(a_{ij},a_{ij})$ satisfies $x_m \geq  x_i$, $1 \leq  i \leq  n$.
	 
	Let $x=(x_1,\,\dots,\,x_n)\in H\cap\cellbound \stdCube^n$ be arbitrary. By the assumption $H\subseteq[0,1]^n$, there exists an integer $1 \leq  k \leq  n$ for which $x_k=1$. Since $x\in H=\bigcap\cH_* \subseteq\bigcap_{1\le i<j\le n}H^n_{ij}(a_{ij},a_{ij})$, we have, as shown above, $x_m\geq x_k=1$. Hence, $H\cap\cellbound \stdCube^n$ is contained in the facet $\{(x_1,\,\dots,\,x_n)\in \stdCube^n:x_m=1\}$.

     \smallskip

     (iii) Since $\Ko_n$ is invariant under $\stdCube^n$-symmetries (see~Lemma~\ref{l: orth Latt: Ko is invariant}~(i)), we may assume that $H$ intersects the facet $F\=\{(x_1,\,\dots,\,x_n)\in \stdCube^n:x_n=1\}$.
     Consider the translation $L\:(x_1,\,\dots,\,x_{n-1},\,1)\mapsto(x_1,\,\dots,\,x_{n-1},\,0)$ mapping $F$ to $\stdCube^{n-1}$.

    Set $\cH'_*\=\bigl\{H^{n-1}_{ij}(a,b)\in\cH^{n-1}:H^{n}_{ij}(a,b)\in\cH_*,\, a,\,b\in\{-1,\,1\},\, 1\le i\le j\le n-1\bigr\}$ and $\cF\=\bigl\{H^{n}_{ij}(a,b)\cap F:H^{n}_{ij}(a,b)\in\cH_*,\,a,\,b\in\{-1,\,1\},\,n\in\{i,\,j\}\bigr\}$. In what follows, we show $H_F\=L^{-1}(\bigcap\cH'_*)$ and $f_F\=\bigcap \cF$ satisfy the desired property.

    Suppose $H^n_{ij}(a,b)\in\cH_*$, where $1\leq i\leq j\leq n$ and $a,\,b\in\{-1,\,1\}$. Since $H\cap F=F\cap\bigcap\cH_*\neq\emptyset$, we have $H^n_{ij}(a,b)\cap F\neq\emptyset$.
     If $i\neq n$ and $j\neq n$, then 
     \begin{equation*}
         H^{n}_{ij}(a,b)\cap F=\{(x_1,\,\dots,\,x_n)\in \stdCube^n:ax_i \geq  bx_j,\,x_n=1\}=L^{-1}\bigl(H_{ij}^{n-1}(a,b)\bigr).
     \end{equation*}
     If $i=n$, then the non-empty set $H_{ij}^{n}(a,b)\cap F=\{(x_1,\,\dots,\,x_n)\in \stdCube^n:a \geq bx_j,\,x_n=1\}$ is either an $(n-2)$-dimensional face of $F$, or equal to $F$;
     the same holds for the case $j=n$. 
     
 The above discussion implies that $\cF$ is a collection of faces of $F$ and $\{H_*\cap F : H_*\in\cH_*\}=\bigl\{L^{-1}(H'_*):H'_*\in\cH'_*\bigr\}\cup\cF$. Thus, $H\cap F=\bigcap\cH_*\cap F=L^{-1}(\bigcap\cH'_*)\cap\bigcap\cF=H_f\cap f_F$.
 
 The construction of $\cH'_*$ implies that $\cH'_*$ is a {\fundSet} of $\cH^{n-1}$. Thus, $\bigcap\cH'_*\in\Ko_{n-1}$ and $H_F=L^{-1}(\bigcap\cH'_*)\in L^*\Ko_{n-1}=\Ko_{n-1}(F)$.
 On the other hand, the assumption $H\cap F\neq\emptyset$ implies that $f_F=\bigcap\cF$ is non-empty, and hence, being the intersection of a collection of faces, is itself a face of $F$.
\end{proof}

\begin{lemma}\label{l: orthotopic Lattes: K_n-1 in dK_n}
	For each facet $F\in\Cube_n^{[n-1]}$ and each $H_F\in\Ko_{n-1}(F)$, there exists $H\in\Ko_n$ for which $H_F=H\cap F=H\cap\cellbound \stdCube^n$.
\end{lemma}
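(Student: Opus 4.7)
The plan is to reduce by $\stdCube^n$-symmetry to the case $F=\{x_n=1\}$ and then lift the fundamental subset of $\cH^{n-1}$ defining $H_F$ to one of $\cH^n$ defining $H$, augmenting it by half-spaces involving the $n$-th coordinate that are trivial on $F$ yet force $x_n=1$ on the rest of $\cellbound\stdCube^n$.

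Concretely, I would first use Lemma~\ref{l: orth Latt: Ko is invariant}~(i) to assume $F=\{(x_1,\dots,x_n)\in\stdCube^n:x_n=1\}$, with $L\:F\to\stdCube^{n-1}$ the translation $(x_1,\dots,x_{n-1},1)\mapsto(x_1,\dots,x_{n-1},0)$, so that $H_F=L^{-1}(\bigcap\cH'_*)$ for some fundamental subset $\cH'_*\subseteq\cH^{n-1}$. Then I would define $\cH_*\subseteq\cH^n$ to consist of the half-spaces $H^n_{ij}(a,b)$ for each $H^{n-1}_{ij}(a,b)\in\cH'_*$ with $1\le i\le j\le n-1$, together with $H^n_{in}(1,-1)$ and $H^n_{in}(-1,-1)$ for each $1\le i<n$ (jointly encoding $x_n\ge|x_i|$), and finally $H^n_{nn}(1,-1)$ and the (trivial) $H^n_{nn}(1,1)=\stdCube^n$ to satisfy both pairings at $(n,n)$. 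A pair-by-pair check against (\ref{e: ort Latt: fund subset}) confirms $\cH_*$ is a fundamental subset of $\cH^n$, hence $H\=\bigcap\cH_*\in\Ko_n$.

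The two equalities are then verified by direct computation. On $F$ (where $x_n=1$) each lifted half-space satisfies $H^n_{ij}(a,b)\cap F=L^{-1}(H^{n-1}_{ij}(a,b))$, while each of the auxiliary half-spaces contains $F$ entirely; intersecting gives $H\cap F=L^{-1}(\bigcap\cH'_*)=H_F$. Conversely, every $x\in H$ satisfies $x_n\ge|x_i|$ for $i<n$ (from $H^n_{in}(\pm1,-1)$) and $x_n\ge0$ (from $H^n_{nn}(1,-1)$); if moreover $x\in\cellbound\stdCube^n$ then $|x_k|=1$ for some $k$, and in either case $k<n$ or $k=n$ these inequalities combined with $x_n\le1$ force $x_n=1$, so $x\in F$. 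Hence $H\cap\cellbound\stdCube^n=H\cap F=H_F$, as required.

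The main point requiring care is the specific choice of auxiliary half-spaces: the lifts involving index $n$ must all have $b=-1$ (and one must pick $H^n_{nn}(1,-1)$ rather than $H^n_{nn}(-1,1)$) so that they become trivial on $F$ while collectively pinning the boundary into $F$; any other admissible choice would either cut strictly into $H_F$ (destroying $H\cap F=H_F$) or leave part of $\cellbound\stdCube^n\smallsetminus F$ unconstrained. Once this combinatorial selection is set, the verification reduces to elementary substitutions.
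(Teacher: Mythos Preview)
Your proof is correct and follows essentially the same approach as the paper: reduce by symmetry to the facet $F=\{x_n=1\}$, lift the fundamental subset $\cH'_*\subseteq\cH^{n-1}$ to $\cH^n$ via $H^{n-1}_{ij}(a,b)\mapsto H^n_{ij}(a,b)$ (equivalently $A\mapsto A\times[-1,1]$), and adjoin the auxiliary half-spaces $H^n_{in}(\pm1,-1)$ to force $x_n\ge|x_i|$. The only cosmetic difference is that the paper adjoins $H^n_{nn}(-1,-1)=\stdCube^n$ where you adjoin $H^n_{nn}(1,1)=\stdCube^n$ for the second pairing at $(n,n)$; both are trivial and the resulting $H$ is identical.
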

\begin{proof}
	Since $\Ko_n$ is invariant under $\stdCube^n$-symmetries (see~Lemma~\ref{l: orth Latt: Ko is invariant}~(i)), we may assume $F\=\{(x_1,\,\dots,\,x_n)\in \stdCube^n:x_n=1\}$. Consider the translation 
	$L\:(x_1,\dots,x_{n-1},1)\mapsto (x_1,\dots,x_{n-1},\,0)$.
	
	Fix $H_F\in\Ko_{n-1}(F)=L^*\Ko_{n-1}$. By the definition of $\Ko_{n-1}$ (cf.~(\ref{e: ort Latt: definition Ko})), suppose $H_F=L^{-1}(\bigcap\cH_F)$, where $\cH_F\subseteq\cH^{n-1}$ is a {\fundSet} of $\cH^{n-1}$.
	It can be directly checked that 
    \begin{equation*}
    \cH_*\=\{A\times [-1,1]:A\in\cH_F\}\cup\{H^n_{in}(-1,-1),\,H^n_{in}(1,-1):1\le i\le n\}
    \end{equation*}
    is a {\fundSet} of $\cH^{n}$ such that $H\=\bigcap\cH_*\in\Ko_n$ satisfies $H_F=H\cap F=H\cap\cellbound \stdCube^n$.
\end{proof}

The above two lemmas yield a description of the structure of $\partial\cK_n$.

\begin{lemma}\label{l: orthotopic Lattes: structure of dK_n}
	For each $d$-dimensional cube $C$, $d\in\N$ with $d\leq n$, we have
	\begin{equation}\label{e: orthotopic Lattes: structure of dKo}
			\begin{aligned}
			\partial\cK_d(C)
            =
			&\{H\cap\cellbound C:H\in\Ko_d(C),\,H\cap \cellbound C\neq\emptyset\}\\
			=
			&\bigl\{H\cap F:H\in\Ko_d(C),\,F\in \Cube_d^{[d-1]}(C),\,H\cap F\neq\emptyset\bigr\}\\
			=
			&\{H\cap f:H\in\Ko_d(C),\,f\in \partial \Cube_d(C),\,H\cap f\neq\emptyset\}.
		\end{aligned}
	\end{equation}
\end{lemma}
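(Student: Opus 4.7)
The plan is to reduce to $C=\stdCube^d$ via Lemma~\ref{l: orth Latt: Ko is invariant}(ii) (a conformal affine map transports $\Cube_d(C)$, $\Ko_d(C)$, $\cK_d(C)$, and $\partial\cK_d(C)$), and then to denote the three sets on the right-hand side of (\ref{e: orthotopic Lattes: structure of dKo}) by $\sS_2,\sS_3,\sS_4$ respectively and close the loop
\[
\sS_2 \subseteq \sS_3 \subseteq \sS_4 \subseteq \partial\cK_d(\stdCube^d) \subseteq \sS_2.
\]
The inclusion $\sS_2\subseteq\sS_3$ is Lemma~\ref{l: orthotopic Lattes: property of K^o_n}(ii): a non-empty $H\cap\cellbound\stdCube^d$ lies in some facet $F$, and since $F\subseteq\cellbound\stdCube^d$, $H\cap\cellbound\stdCube^d=H\cap F$. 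The inclusion $\sS_3\subseteq\sS_4$ is immediate.

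For $\sS_4\subseteq\partial\cK_d$, I would induct on $d$. Given $H\cap f\neq\emptyset$ with $f\in\partial\Cube_d$, pick a facet $F\supseteq f$ and apply Lemma~\ref{l: orthotopic Lattes: property of K^o_n}(iii) to write $H\cap F=H_F\cap f_F$ with $H_F\in\Ko_{d-1}(F)$ and $f_F\in\Cube_{d-1}(F)$. Then $H\cap f=H_F\cap(f_F\cap f)$, and $f_F\cap f$, being an intersection of two faces of $F$, is itself a face of $F$. If it equals $F$, then $f=f_F=F$ and $H\cap f=H_F\in\Ko_{d-1}(F)\subseteq\partial\cK_d$; otherwise $f_F\cap f\in\partial\Cube_{d-1}(F)$, so the inductive hypothesis places $H_F\cap(f_F\cap f)$ in $\partial\cK_{d-1}(F)$, which is contained in $\partial\cK_d$ because faces of $F$ are faces of $\stdCube^d$.

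The crux is $\partial\cK_d\subseteq\sS_2$: for $G\in\Ko_e(c)$ with $c$ a face of $\stdCube^d$ of dimension $e\leq d-1$, I must produce $H\in\Ko_d$ with $H\cap\cellbound\stdCube^d=G$. My proposed $H$ is the \emph{cone from the origin} $H\=\{tg:t\in[0,1],\,g\in G\}$. Using Lemma~\ref{l: orth Latt: Ko is invariant}(i) to move $c$ by a $\stdCube^d$-symmetry to the standard position $\{1\}^{d-e}\times\stdCube^e$ (so $G=\{1\}^{d-e}\times G'$ with $G'=\bigcap\cH^e_*\cap\stdCube^e$ for some fundamental subset $\cH^e_*\subseteq\cH^e$), I would explicitly assemble a fundamental subset $\cH^d_*\subseteq\cH^d$ from: the lift of $\cH^e_*$ to the last $e$ coordinates; the equality constraints $x_i=x_j$ for $1\leq i<j\leq d-e$ (including both $H^d_{ij}(1,1)$ and $H^d_{ij}(-1,-1)$); positivity $x_i\geq 0$ for $i\leq d-e$ (via $H^d_{ii}(1,-1)$); and the domination inequalities $x_i\geq x_j$ and $x_i\geq -x_j$ for $i\leq d-e<j$ (via $H^d_{ij}(1,1)$ and $H^d_{ij}(1,-1)$). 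A routine check verifies the fundamental-subset conditions (\ref{e: ort Latt: fund subset}), confirms $\bigcap\cH^d_*\cap\stdCube^d=H$, and shows $H\cap\cellbound\stdCube^d=G$, since on $H$ one has $\max_i|x_i|=x_1=\cdots=x_{d-e}$, so the boundary points are precisely those at which this common value equals~$1$.

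The main obstacle is this final cone construction: while the geometric picture is transparent, packaging the cone as an intersection of a fundamental subset of $\cH^d$ and verifying that the resulting intersection recovers the cone exactly requires case-by-case bookkeeping over all coordinate pairs (fixed--fixed, fixed--free, and free--free), together with the verification of the fundamental-subset conditions in each regime.
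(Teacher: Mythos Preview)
Your argument is correct, and for three of the four inclusions you follow the paper's proof essentially verbatim: the reduction to $C=\stdCube^d$ via Lemma~\ref{l: orth Latt: Ko is invariant}(ii), the inclusions $\sS_2\subseteq\sS_3\subseteq\sS_4$ via Lemma~\ref{l: orthotopic Lattes: property of K^o_n}(ii), and the inductive step $\sS_4\subseteq\partial\cK_d$ via Lemma~\ref{l: orthotopic Lattes: property of K^o_n}(iii) all match the paper.

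The genuine difference lies in the last inclusion $\partial\cK_d\subseteq\sS_2$. The paper proceeds inductively here as well: it first isolates the codimension-one case as a separate lemma (Lemma~\ref{l: orthotopic Lattes: K_n-1 in dK_n}), and for a cell $H'\in\Ko_e(f)$ with $e<d-1$ it applies the induction hypothesis inside each facet $F_i\supseteq f$ to obtain $H'_i\in\Ko_{d-1}(F_i)$ with $H'=H'_i\cap\cellbound F_i$, lifts each $H'_i$ to some $H_i\in\Ko_d$ via Lemma~\ref{l: orthotopic Lattes: K_n-1 in dK_n}, and then takes $H=\bigcap_i H_i$, using the closure of $\Ko_d$ under intersection (Lemma~\ref{l: ort Latt: Ko Union and Intersect}(ii)). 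Your route is instead a single direct construction: the cone $\Cone(0,G)$ with an explicit fundamental subset, bypassing induction, Lemma~\ref{l: orthotopic Lattes: K_n-1 in dK_n}, and the intersection trick altogether. Your approach is more self-contained and makes the geometry of $\Ko_d$ transparent (and indeed subsumes Lemma~\ref{l: orthotopic Lattes: K_n-1 in dK_n} as the special case $e=d-1$); the paper's approach trades that explicitness for modularity, reusing already-established lemmas and avoiding the coordinate bookkeeping you flag as the main obstacle. One small remark on that bookkeeping: to literally satisfy~(\ref{e: ort Latt: fund subset}) your listed $\cH^d_*$ needs a few harmless additions (e.g.\ $H^d_{ii}(1,1)$ for $i\le d-e$ and $H^d_{ij}(1,-1)$ for $i<j\le d-e$), but these do not change the intersection and are exactly the sort of thing your ``routine check'' would catch.
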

\begin{proof}
We argue by induction on the dimension $n$ of the ambient space $\R^n$. Clearly, the lemma is true when $n=1$. Now suppose $n\geq 2$ and make the induction hypothesis that the lemma holds when the dimension of the ambient space is in $\{1,\,\dots,\,n-1\}$, or equivalently, (\ref{e: orthotopic Lattes: structure of dKo}) holds for all cubes of dimension $1\le d\leq n-1$.

Now consider an arbitrary $n$-dimensional cube $C$. By mapping $C$ to the standard $n$-dimensional cube $\stdCube^n$ using a conformal affine map, it suffices to assume $C={\stdCube}^n$ and consider $\partial\cK_n$.

First, by Lemma~\ref{l: orthotopic Lattes: property of K^o_n}~(ii), we have
\begin{equation*}
	\begin{aligned}
		\{H\cap\cellbound \stdCube^n:H\in\Ko_n,\,H\cap \cellbound \stdCube^n\neq\emptyset\}
		\subseteq
		&\bigl\{H\cap F:H\in\Ko_n,\,F\in \Cube_n^{[n-1]},\,H\cap F\neq\emptyset \bigr\}\\
		\subseteq
		&\{H\cap f:H\in\Ko_n,\,f\in \partial \Cube_n,\,H\cap f\neq\emptyset\}.
	\end{aligned}
\end{equation*}

	Then we prove 
	$\{H\cap f:H\in\Ko_n,\,f\in \partial \Cube_n,\,H\cap f\neq\emptyset\}
	\subseteq\partial\cK_n$.
	Let $H\in\Ko_n$ and $f\in \partial \Cube_n$ satisfy $H\cap f\neq\emptyset$. Let $F$ be a facet for which $f\subseteq F$. By Lemma~\ref{l: orthotopic Lattes: property of K^o_n}~(iii), there exist $H_F\in\Ko_{n-1}(F)$ and $f_F\in \Cube_{n-1}(F)$ for which $H\cap F=H_F\cap f_F$. Thus, since $f\subseteq F$, we have $H\cap f=H_F\cap(f\cap f_F)$. Clearly $f\cap f_F\in \Cube_{n-1}(F)$. If $f\cap f_F=F$, then $H\cap f=H_F\in\Ko_{n-1}(F)$. Otherwise, when $f\cap f_F\neq F$,  by the induction hypothesis, $H\cap f=H_F\cap (f\cap f_F)\in\partial\cK_{n-1}(F)$. In both cases,
    $H\cap f\in\cK_{n-1}(F)\subseteq\partial\cK_{n}$. 

	Finally, we prove $\partial\cK_n\subseteq\{H\cap\cellbound \stdCube^n:H\in\Ko_n,\,H\cap \cellbound \stdCube^n\neq\emptyset\}$.
	
    Fix $H'\in\partial\cK_n$. By the definition of $\partial\cK_n$ (cf.~(\ref{e: orth Latt: symmetric decomp})), suppose $H'\in\Ko_d(f)$ for some $0 \leq  d \leq  n-1$ and $f\in \Cube_n^{[d]}$. If $d=n-1$, then by Lemma~\ref{l: orthotopic Lattes: K_n-1 in dK_n}, $H'\in\{H\cap\cellbound \stdCube^n:H\in\Ko_n,\,H\cap \cellbound \stdCube^n\neq\emptyset\}$.
	
    Now suppose $d<n-1$. Let $F_i\in\Cube_n^{[n-1]}$, $i\in\cI$, be all the facets of $\stdCube^n$ containing $f$. 
    
    Fix $j\in\cI$. Clearly $f\in\partial\Cube_{n-1}(F_j)$ and $H'\in\partial\cK_{n-1}(F_j)$. By the induction hypothesis, we may choose $H'_j\in\Ko_{n-1}(F_j)$ with $H'=H'_j\cap\cellbound F_j$. By Lemma~\ref{l: orthotopic Lattes: K_n-1 in dK_n}, there is $H_j\in\Ko_n$ such that $H'_j=H_j\cap F_j=H_j\cap\cellbound \stdCube^n$. Thus, $H'=H_j\cap \cellbound F_j$.

    It is easy to check $f=\bigcap_{i\in\cI} F_i=\bigcap_{i\in\cI} \cellbound F_i$. This, combined with the fact that $H_i\cap F_i=H_i\cap\cellbound \stdCube^n$ and $H'=H_i\cap \cellbound F_i$ for each $i\in\cI$, implies
    $H'=\bigcap_{i\in\cI} H_i\cap\cellbound F_i=\bigcap_{i\in\cI}H_i\cap F_i=\bigcap_{i\in\cI} H_i\cap\cellbound \stdCube^n$.
	Thus $H'\in \{H\cap\cellbound \stdCube^n:H\in\Ko_n,\,H\cap \cellbound \stdCube^n\neq\emptyset\}$ since $\bigcap_{i\in\cI}H_i\in\Ko_n$ (see~Lemma~\ref{l: ort Latt: Ko Union and Intersect}~(ii)).    
\end{proof}


As a consequence of Lemma~\ref{l: orthotopic Lattes: structure of dK_n}, $\Ko_n$ is the collection of cones whose bases are in $\partial\cK_n$.
Recall that for a point $x\in\R^n$ and a subset $Y\subseteq\R^n$, the cone with tip $x$ and base $Y$ is
\begin{equation*}
\Cone(x,Y)\=\{x+\lambda(y-x):y\in Y,\,\lambda\in[0,1]\}.
\end{equation*}

\begin{cor}\label{c: ort Latt: each cell is Cone(0,H')}
	For each $d\in\N$ with $d\leq n$, we have $\Ko_d=\{\{0\}\}\cup\{\Cone(0,H'):H'\in\partial\cK_d\}$. Moreover, each element in $\cK_d$ is a simplex.
\end{cor}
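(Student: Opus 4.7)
The plan is to exploit the fact that the half-spaces $H^d_{ij}(a,b)$ defining $\Ko_d$ are cut out by the homogeneous inequalities $ax_i\ge bx_j$: for any {\fundSet} $\cH_*\subseteq\cH^d$, the unrestricted intersection $\bigcap\cH_*$ is a convex polyhedral cone with vertex at $0$, and intersecting with $\stdCube^d$, which is itself star-shaped with respect to $0$, yields a set $H\in\Ko_d$ that is star-shaped with respect to $0$ (i.e., $\lambda x\in H$ whenever $x\in H$ and $\lambda\in[0,1]$). This star-shape property is the engine behind the whole corollary.

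For the first equality, I would first check that $\{0\}\in\Ko_d$: take a {\fundSet} containing both $H^d_{ii}(1,-1)=\{x_i\ge 0\}$ and $H^d_{ii}(-1,1)=\{x_i\le 0\}$ for every $1\le i\le d$, and complete the remaining pairs with $i<j$ arbitrarily; the intersection already forces $x_1=\cdots=x_d=0$. For $H\in\Ko_d$ with $H\ne\{0\}$, pick a nonzero $x\in H$; since the cone $\bigcap\cH_*$ contains the entire ray through $x$ and $\stdCube^d$ is bounded, that ray exits $\stdCube^d$ at a unique point $y\in H\cap\cellbound\stdCube^d$, and by star-shape $x$ lies on the segment $[0,y]\subseteq H$, whence $x\in\Cone(0,H\cap\cellbound\stdCube^d)$. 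The reverse inclusion $\Cone(0,H\cap\cellbound\stdCube^d)\subseteq H$ is immediate from star-shape, so $H=\Cone(0,H\cap\cellbound\stdCube^d)$; Lemma~\ref{l: orthotopic Lattes: structure of dK_n} then identifies the base as an element of $\partial\cK_d$. Conversely, for each $H'\in\partial\cK_d$ the same Lemma~\ref{l: orthotopic Lattes: structure of dK_n} supplies $H\in\Ko_d$ with $H\cap\cellbound\stdCube^d=H'$, and since $H'\ne\emptyset$ and $0\notin\cellbound\stdCube^d$ we have $H\ne\{0\}$; the star-shape argument then gives $\Cone(0,H')=H\in\Ko_d$.

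For the simplex claim I would induct on $d$. The base case $d=1$ is immediate since each element of $\cK_1$ is a point or a closed subinterval of $[-1,1]$. For the inductive step, unpack $\cK_d=\bigcup_{c\in\Cube_d}\Ko_{\dim(c)}(c)$: for $c\in\Cube_d$ with $\dim(c)<d$ the elements of $\Ko_{\dim(c)}(c)$ are simplices by the inductive hypothesis combined with the conformal-affine invariance in Lemma~\ref{l: orth Latt: Ko is invariant}~(ii); for $c=\stdCube^d$ the first part gives that every $H\in\Ko_d$ is either $\{0\}$ (a $0$-simplex) or a cone $\Cone(0,H')$ with $H'\in\partial\cK_d$. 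By Lemma~\ref{l: orthotopic Lattes: property of K^o_n}~(ii), such an $H'$ lies in a single facet $F\in\Cube_d^{[d-1]}$ and hence in an affine hyperplane $\{x_i=\pm1\}$ avoiding the origin; by the inductive hypothesis applied to $F$, $H'$ is a simplex, and taking the cone with apex $0$ outside the affine hull of $H'$ produces again a simplex, completing the induction. The main technical obstacle is ensuring that the star-shape reduction genuinely identifies $H\cap\cellbound\stdCube^d$ as the full ``outer boundary'' of the truncated cone $H$, so that the cone reconstruction $H=\Cone(0,H\cap\cellbound\stdCube^d)$ is tight; once this is secured, the remainder is bookkeeping.
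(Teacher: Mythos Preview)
Your proposal is correct and follows essentially the same route as the paper's proof: both arguments rest on the observation that each $H\in\Ko_d$ is star-shaped with respect to $0$ so that $H=\Cone(0,H\cap\cellbound\stdCube^d)$, and then invoke Lemma~\ref{l: orthotopic Lattes: structure of dK_n} to identify the base with an element of $\partial\cK_d$ (and conversely). Your write-up is simply more explicit---spelling out why $\{0\}\in\Ko_d$, why the ray from $0$ through a nonzero point of $H$ reaches $\cellbound\stdCube^d$ inside $H$, and why the cone over a simplex in a facet (whose affine hull avoids $0$) is again a simplex---whereas the paper compresses these into ``It is easy to see from the definition'' and a one-line induction.
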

Recall that a $d$-simplex ($d\leq n$) in $\R^n$ is the convex hull of $d+1$ affinely independent points.
\begin{proof}
    It is easy to see from the definition of $\Ko_d$ (see~(\ref{e: ort Latt: Hi and Hij}) and (\ref{e: ort Latt: definition Ko})) that $\{0\}\in\Ko_n$ and for each $H\in\Ko_d\smallsetminus\{\{0\}\}$, $H=\Cone\bigl(0,H\cap\cellbound \stdCube^d\bigr)$, and thus, by Lemma~\ref{l: orthotopic Lattes: structure of dK_n}, $H=\Cone(0,H')$ for some $H'\in\partial\cK_d$. Conversely, for each $H'\in\partial\cK_d$, by Lemma~\ref{l: orthotopic Lattes: structure of dK_n}, there exists $H\in\Ko_d$ such that $H'=H\cap\cellbound \stdCube^d$, and thus, as shown above, $\Cone(0,H')=\Cone\bigl(0,H\cap\cellbound \stdCube^d\bigr)=H$. So $\Ko_d=\{\Cone(0,H'):H'\in\partial\cK_d\}\cup\{\{0\}\}$. 
    
    It follows immediately that for each $d$-dimensional cube $C$, elements in $\Ko_d(C)$ are either singletons, or cones with bases in $\partial\cK_d(C)$. Thus, by (\ref{e: orth Latt: symmetric decomp}) and since $\cK_0=\{\{0\}\}$, we can inductively conclude that each $H\in\cK_d$ is a simplex.
\end{proof}

Since an element of $\cK_n$ is a simplex, it is a topological cell. Moreover, we show that $\cK_n$ is a cell decomposition of $\stdCube^n$.

\begin{prop}\label{p: ort Latt: K_n is cell decomp}
	$\cK_n$ is a cell decomposition of $\stdCube^n$ that refines $\Cube_n$.
\end{prop}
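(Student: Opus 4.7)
The plan is to argue by induction on $n$; by Lemma~\ref{l: orth Latt: Ko is invariant}(ii) it suffices to treat $C=\stdCube^n$. The base case $n=1$ is a direct check since $\cK_1=\{\{-1\},\,\{0\},\,\{1\},\,[-1,0],\,[0,1]\}$ manifestly decomposes $[-1,1]$ and refines $\Cube_1$. For the inductive step, I would first glue the inductive decompositions on the boundary: by the inductive hypothesis, $\cK_{\dim c}(c)$ is a cell decomposition of $c$ refining $\Cube_{\dim c}(c)$ for every $c\in\partial\Cube_n$. Applying Lemma~\ref{l: construct refinement} with $\cD\=\partial\Cube_n$ and $\cD'(c)\=\cK_{\dim c}(c)$, and verifying the compatibility $\cK_{\dim\sigma}(\sigma)|_c=\cK_{\dim c}(c)$ for $c\subseteq\sigma$ directly from the recursive definition $\cK_d(C)=\bigcup_{c'\in\Cube_d(C)}\Ko_{\dim c'}(c')$, I conclude that $\partial\cK_n=\bigcup_{c\in\partial\Cube_n}\cK_{\dim c}(c)$ is a cell decomposition of $\cellbound\stdCube^n$ refining $\partial\Cube_n$.

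Next I would verify that $\cK_n=\Ko_n\cup\partial\cK_n$ is a cell decomposition of $\stdCube^n$. Each element is a simplex by Corollary~\ref{c: ort Latt: each cell is Cone(0,H')}; the covering property follows from $\stdCube^n=\bigcup\Ko_n$ via Lemma~\ref{l: ort Latt: Ko Union and Intersect}(i); local finiteness is trivial since $\cK_n$ is finite. The heart of the argument is the disjointness of cell-interiors, where I would use a ``polar decomposition'': for $H=\Cone(0,H')\in\Ko_n$ with $H'\in\partial\cK_n$, one has $\stdCellint{H}=\{\lambda h:\lambda\in(0,1),\,h\in\stdCellint{H'}\}\subseteq\stdInt{\stdCube^n}$, so every point of $\stdCellint{H}$ has a uniquely determined radial foot in $\stdCellint{H'}\subseteq\cellbound\stdCube^n$. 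Consequently, two distinct cones in $\Ko_n$ produce two disjoint foot-sets in $\partial\cK_n$ (disjoint by the previous paragraph), hence disjoint cell-interiors. The degenerate cell $\{0\}$ has interior $\{0\}\in\stdInt{\stdCube^n}$ disjoint from all cone-interiors (whose defining condition excludes $\lambda=0$), and $\partial\cK_n$-cells have cell-interiors in $\cellbound\stdCube^n$, disjoint from $\Ko_n$-interiors.

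For the cell-boundary axiom, $\cellbound\{0\}=\emptyset$; boundary cells are handled by the first paragraph; and for $H=\Cone(0,H')=[0,v_1,\dots,v_k]$ the simplicial boundary is $H'$ together with the lateral codim-$1$ faces $\Cone(0,\tau_i)$ where $\tau_i=[v_1,\dots,\hat v_i,\dots,v_k]$. An iterated application of Corollary~\ref{c: ort Latt: each cell is Cone(0,H')} shows that each codim-$1$ sub-simplex $\tau_i$ of $H'$ is itself a cell in $\partial\cK_n$: since $H'$ is a cone over a strictly lower-dimensional cell in $\partial\cK_n$, the sub-simplex recursion bottoms out, and the same corollary gives $\Cone(0,\tau_i)\in\Ko_n$. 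Thus $\cellbound H$ is a union of cells of $\cK_n$. The refinement of $\Cube_n$ is then immediate: each $\sigma\in\cK_n$ is contained in the face of $\stdCube^n$ indexing the $\Ko$-piece it belongs to, and each face $c\in\Cube_n$ equals $\bigcup\cK_{\dim c}(c)\subseteq\cK_n$ by induction (and by the covering property when $c=\stdCube^n$).

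I expect the main technical hurdle to be the polar-decomposition step: establishing that every interior point of a cone cell has a uniquely determined radial foot on $\cellbound\stdCube^n$ is geometrically intuitive but requires a small argument from convexity of $\stdCube^n$ and the fact that $0$ is an interior center. A secondary point of care is the simplicial closure of $\partial\cK_n$ under taking sub-simplices, which is what powers the cell-boundary verification, and which is handled by unwinding the recursive definition of $\cK_d(c)$.
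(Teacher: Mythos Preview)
Your proposal is correct and follows the same inductive scheme as the paper: assemble $\partial\cK_n$ from the inductive hypothesis via Lemma~\ref{l: construct refinement}, then verify the cell-decomposition axioms for $\cK_n=\Ko_n\cup\partial\cK_n$ using the cone description of Corollary~\ref{c: ort Latt: each cell is Cone(0,H')}; your ``polar decomposition'' for disjointness is exactly the paper's computation $\stdCellint{\Cone(0,H')}=\{\lambda h:\lambda\in(0,1),\,h\in\stdCellint{H'}\}$.

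One simplification worth noting for the cell-boundary axiom: you work harder than necessary by arguing that each codim-$1$ simplicial face $\tau_i$ of $H'$ is itself a cell in $\partial\cK_n$ (which is true but, as you anticipate, needs a separate recursion). The paper instead writes $\cellbound H=H'\cup\Cone(0,\cellbound H')$, observes that $\cellbound H'$ is a union of cells in $\partial\cK_n$ simply because $\partial\cK_n$ is already known to be a cell decomposition, and then cones over those cells---each such cone lies in $\Ko_n$ by the corollary, and no sub-simplex closure argument is needed.
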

\begin{proof}
	We prove the claim by induction. Clearly $\cK_0=\{\{0\}\}$ and $\cK_1=\{\{-1\},\,\{0\},\,\{1\},\,[-1,0],\,[0,1]\}$
	are cell decompositions refining $\Cube_0$ and $\Cube_1$, respectively. 
	Fix $n\in\N$, $n\geq2$, and make the induction hypothesis that for each $0 \leq  d \leq  n-1$, $\cK_d$
	 is a cell decomposition that refines $\Cube_d$ (equivalently, for each $d$-dimensional cube $C$, $\cK_d(C)$ is a cell decomposition of $C$ that refines $\Cube_d(C)$).
	
	First, by the induction hypothesis and the construction in (\ref{e: orth Latt: symmetric decomp}), we can use Lemma~\ref{l: construct refinement} to conclude that $\partial\cK_n$ is a cell decomposition that refines $\partial \Cube_n$. 
	
	Now, we verify conditions (i)--(iv) in Definition~\ref{d: cell decomposition} for $\cK_n$. 
	Conditions~(i) and (iv) hold since $\cK_n$ is a finite cover of $\stdCube^n$ (cf.~Lemma~\ref{l: ort Latt: Ko Union and Intersect}~(i)). It remains to verify (ii) and (iii).
	
	\smallskip
	
	(ii) Let $\sigma,\,\tau\in\cK_n$ be distinct. 
	If $\sigma,\,\tau\in\partial\cK_n$, then since $\partial\cK_n$ is a cell decomposition, $\brCellint{\sigma}\cap\brCellint{\tau}=\emptyset$.
	In what follows, we consider the case where $\sigma,\,\tau\in\Ko_n$. Clearly, $\stdCellint{\sigma}\cap\stdCellint{\tau}=\emptyset$ if $\sigma=\{0\}$ or $\tau=\{0\}$. Now suppose $\sigma\neq\{0\}$ and $\tau\neq\{0\}$. Then by Corollary~\ref{c: ort Latt: each cell is Cone(0,H')},
    there exist $\sigma_1,\,\tau_1\in\partial\cK_n$ such that $\sigma=\Cone(0,\sigma_1)$ and $\tau=\Cone(0,\tau_1)$. Since $\sigma\ne\tau$, we have $\sigma_1\ne\tau_1$, and thus (since $\partial\cK_n$ is a cell decomposition) $\brCellint{\sigma_1}\cap\brCellint{\tau_1}=\emptyset$. Then $\stdCellint{\sigma}=\{\lambda x:x\in\stdCellint{\sigma_1},\,\lambda\in(0,1)\}$ is disjoint from $\stdCellint{\tau}$.
	Finally, if $\sigma\in\Ko_n$ and $\tau\in\partial\cK_n$, it is clear that $\stdCellint{\sigma}\subseteq\stdCellint{\stdCube^n}$, and thus $\brCellint{\sigma}\cap\brCellint{\tau}=\emptyset$.
	
	\smallskip
	
	(iii) Fix $H\in\cK_n$. If $H\in\partial\cK_n$, then since $\partial\cK_n$ is a cell decomposition, $\cellbound H$ is a union of cells in $\partial\cK_{n}$. If $H=\{0\}$, then $\cellbound H=\emptyset$. Now suppose $H\in\Ko_n$, $H\neq\{0\}$. Then by Corollary~\ref{c: ort Latt: each cell is Cone(0,H')}, there exists $H_1\in\partial\cK_{n}$, for which $H=\Cone(0,H_1)=\{\lambda x:\lambda\in[0,1],\,x\in H_1\}$. Since $\partial\cK_n$ is a cell decomposition, $\cellbound H_1$ is a union of cells in $\partial\cK_{n}$, and thus (again by Corollary~\ref{c: ort Latt: each cell is Cone(0,H')}) $\Cone(0,\cellbound H_1)$ is a union of cells in $\Ko_n$. It follows that $\cellbound H= H_1\cup \Cone(0,\cellbound H_1)$ is a union of cells in $\cK_n$.

    Thus $\cK_n$ is a cell decomposition. By definition $\cK_n$ is a refinement of $\Cube_n$.
\end{proof}

Next, we consider a self-similar subdivision of $\cK_n$.
Fix $l\in\N$. For each $\alpha\=(\alpha_1,\,\dots,\,\alpha_n)\in\{0,\,\dots,\,l-1\}^n$, denote the $n$-dimensional cube centered at $x_\alpha\=\bigl(\frac{2\alpha_1+1-l}{l},\,\cdots,\,\frac{2\alpha_n+1-l}{l}\bigr)$ by
\begin{equation*}
C^n_{\alpha}\= [ (2\alpha_1-l)/l, (2\alpha_1-l+2)/l ]\times\cdots\times [ (2\alpha_n-l)/l , (2\alpha_n-l+2)/l ].
\end{equation*}
Using Lemma~\ref{l: construct refinement} and Proposition~\ref{p: ort Latt: K_n is cell decomp}, we construct a cell decomposition as follows:
\begin{equation}\label{e: ort Latt: K_n,l}
\cK_{n,l}\=\bigcup_{\alpha\in\{0,\,\dots,\,l-1\}^n}\cK_n(C^n_{\alpha}).
\end{equation}
For an $n$-dimensional cube $C$, we denote $\cK_{n,l}(C)\=L^*\cK_{n,l}$, where $L$ is a conformal affine map with $L(C)=\stdCube^n$. 
\begin{rem}\label{r: ort Latt: Kn,l is invariant}
    Note that $\cK_{n,l}(C)$ does not depend on the choice of $L$ since $\cK_{n,l}$ is invariant under each $T\in\Sym(\stdCube^n)$, which follows from Lemma~\ref{l: orth Latt: Ko is invariant}~(ii) and the observation that for each $\alpha\in\{0,\,\dots,\,l-1\}^n$, there exists $\beta\in\{0,\,\dots,\,l-1\}^n$ for which $TC^n_{\alpha}=C^n_\beta$.
\end{rem}

\begin{figure}[h]
    \centering
    \includegraphics[width=0.6\linewidth]{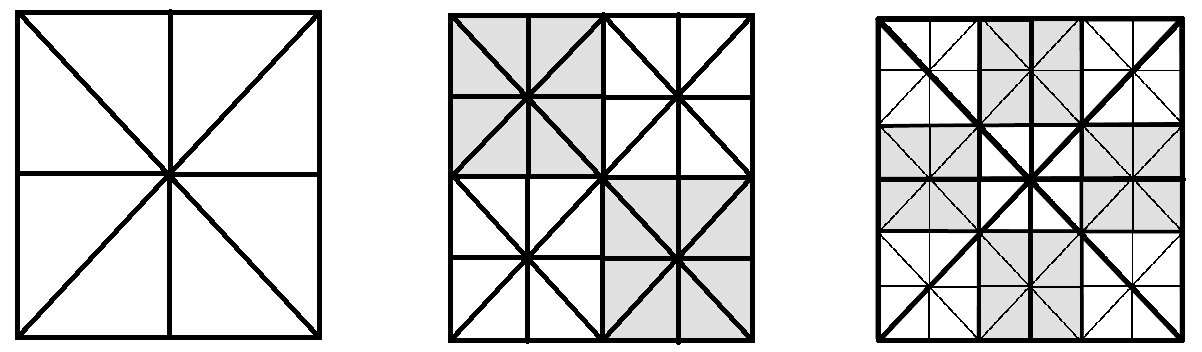}
	\caption{$\cK_2$, $\cK_{2,2}$, and $\cK_{2,3}$.}
\end{figure}

\begin{lemma} 
	For each $l\in\N$, the cell decomposition $\cK_{n,l}$ is a refinement of $\cK_n$.
\end{lemma}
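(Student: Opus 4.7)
The plan is to first show that $\cK_{n,l}$ is a cell decomposition of $\stdCube^n$ and then verify the two conditions of Definition~\ref{d: refinement}; both steps reduce to a coordinate analysis of how the defining hyperplanes of $\cK_n$ sit relative to the subcube grid $\{C^n_\alpha\}$.

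For the cell decomposition structure, I would apply Lemma~\ref{l: construct refinement} to the standard cubical subdivision $\cD \= \bigcup_\alpha \Cube_n(C^n_\alpha)$ of $\stdCube^n$, which is itself a cell decomposition refining $\Cube_n$. To each cell $c \in \cD$ assign $\cD'(c) \= \cK_{\dim c}(c)$; Proposition~\ref{p: ort Latt: K_n is cell decomp} ensures this is a cell decomposition of $c$ refining $\cD|_c = \Cube_{\dim c}(c)$. The consistency condition $\cD'(\sigma)|_c = \cD'(c)$ for $c \subseteq \sigma$ in $\cD$ follows from the recursive formula~(\ref{e: orth Latt: symmetric decomp}), since the cells of $\cK_{\dim\sigma}(\sigma)$ contained in $c$ are precisely $\bigcup\{\Ko_{\dim c'}(c') : c' \in \Cube_{\dim\sigma}(\sigma),\, c' \subseteq c\} = \bigcup\{\Ko_{\dim c'}(c') : c' \in \Cube_{\dim c}(c)\} = \cK_{\dim c}(c)$. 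Since $\bigcup_{c \in \cD} \cD'(c) = \cK_{n,l}$, this exhibits $\cK_{n,l}$ as a cell decomposition refining $\cD$, hence refining $\Cube_n$.

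The geometric heart of the proof is the claim: \emph{every defining hyperplane of $\cK_n$ either misses $\brInt{C^n_\alpha}$, lies in $\cellbound C^n_\alpha$, or coincides on $C^n_\alpha$ with a defining hyperplane of $\cK_n(C^n_\alpha)$.} Top cells of $\cK_n$ are intersections of half-spaces $H^n_{ij}(a,b)$ with boundary hyperplanes $\{x_i = 0\}$ and $\{x_i \pm x_j = 0\}$ for $i < j$, while top cells of $\cK_n(C^n_\alpha)$ have boundary hyperplanes $\{x_i = (x_\alpha)_i\}$ and $\{(x_i - (x_\alpha)_i) \pm (x_j - (x_\alpha)_j) = 0\}$, where $(x_\alpha)_i = (2\alpha_i + 1 - l)/l$. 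For $\Pi = \{x_i = 0\}$, direct computation shows $\Pi$ crosses $\brInt{C^n_\alpha}$ exactly when $l$ is odd and $\alpha_i = (l-1)/2$, in which case $(x_\alpha)_i = 0$ and $\Pi$ is the central axis of $C^n_\alpha$; when $l$ is even $\Pi$ lies along common facets of adjacent subcubes. For $\Pi = \{x_i - x_j = 0\}$ with $i < j$, projecting onto the $(x_i, x_j)$-plane reveals that $\Pi$ meets $\brInt{C^n_\alpha}$ only when $\alpha_i = \alpha_j$, where $(x_\alpha)_i = (x_\alpha)_j$ and $\Pi$ coincides with a diagonal reflection hyperplane of $C^n_\alpha$; if $|\alpha_i - \alpha_j| = 1$ then $\Pi \cap C^n_\alpha$ is a single corner, and otherwise $\Pi \cap C^n_\alpha = \emptyset$. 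The case $\Pi = \{x_i + x_j = 0\}$ is symmetric, yielding the condition $\alpha_i + \alpha_j = l - 1$. The conclusion is that each such $\Pi \cap \stdCube^n$ is a union of $(n-1)$-cells of $\cK_{n,l}$.

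Given this alignment, by Lemma~\ref{l: properties of cell decompositions}(v) each cell-interior $\stdCellint{\sigma}$ of $\sigma \in \cK_{n,l}$ is either disjoint from a given $\cK_n$-hyperplane or entirely contained in it. Therefore $\stdCellint{\sigma}$ lies in the cell-interior of a unique $\tau \in \cK_n$, and Lemma~\ref{l: properties of cells}(i) gives $\sigma = \cls{\stdCellint{\sigma}} \subseteq \tau$, yielding Definition~\ref{d: refinement}(i). Condition (ii) then follows because both $\cK_n$ and $\cK_{n,l}$ partition $\stdCube^n$ into cell-interiors (Lemma~\ref{l: properties of cell decompositions}(ii)), forcing each $\tau \in \cK_n$ to equal the union of those $\sigma \in \cK_{n,l}$ with $\sigma \subseteq \tau$. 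I expect the main obstacle to be the coordinate verification in the third paragraph, particularly the parity-dependent behaviour of $\{x_i = 0\}$ and the combinatorial conditions $\alpha_i = \alpha_j$ and $\alpha_i + \alpha_j = l - 1$ for the diagonal and anti-diagonal hyperplanes; once this grid-alignment is confirmed, all remaining deductions are formal.
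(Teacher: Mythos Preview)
Your approach is sound and rests on the same geometric fact as the paper's: the affine data defining $\cK_n$ aligns with the subcube grid $\{C^n_\alpha\}$. The execution differs. The paper avoids your case split by working with half-spaces rather than hyperplanes: setting $T_\alpha(x)=l(x-x_\alpha)$, it computes
\[
T_\alpha\bigl(H^n_{ij}(a,b)\bigr)=\{y\in T_\alpha(\stdCube^n):ay_i-by_j\geq 2(b\alpha_j-a\alpha_i)+(b-a)(1-l)\},
\]
so the right-hand constant is an even integer, and a single sign check gives $T_\alpha^{-1}(H^n_{ij}(a,b))\subseteq H^n_{ij}(a,b)$ or $\subseteq H^n_{ij}(-a,-b)$. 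This handles all $i,j,a,b$ at once and yields condition~(i) directly via fundamental subsets. For condition~(ii) the paper shows each $H_0\in\cH^n$ is a union of $\cK_{n,l}$-cells (since $H_0\cap C^n_\alpha$ is either $C^n_\alpha$ or $T_\alpha^{-1}(H_0)$ when $x_\alpha\in H_0$), and then passes to $\Ko_n$ and $\partial\cK_n$ via Lemma~\ref{l: orthotopic Lattes: structure of dK_n}. Your parity and $\alpha_i=\alpha_j$, $\alpha_i+\alpha_j=l-1$ conditions are the unpacked versions of this single inequality.

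One point in your argument deserves more care. From ``$\stdCellint{\sigma}$ is either disjoint from or contained in each $\cK_n$-hyperplane'' you jump to ``$\stdCellint{\sigma}$ lies in $\stdCellint{\tau}$ for a unique $\tau\in\cK_n$''. This presumes that the cell-interiors of $\cK_n$ coincide with the strata of the hyperplane arrangement $\{x_i=0\},\{x_i\pm x_j=0\},\{x_i=\pm1\}$ restricted to $\stdCube^n$; that is true, but it is not immediate from the paper's definition of $\cK_n$ via fundamental subsets and the recursion~(\ref{e: orth Latt: symmetric decomp}), and you would need to argue it (e.g.\ via Corollary~\ref{c: ort Latt: each cell is Cone(0,H')} and induction on dimension). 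A cleaner route, closer to the paper, is to upgrade your hyperplane statement to half-spaces: once each $H^n_{ij}(a,b)$ is a union of $\cK_{n,l}$-cells, every $\tau\in\Ko_n$ is such a union by intersection, every $\tau\in\partial\cK_n$ is by Lemma~\ref{l: orthotopic Lattes: structure of dK_n}, and then both refinement conditions follow formally from Lemma~\ref{l: properties of cell decompositions}(v) without invoking arrangement strata.
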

\begin{proof}
	For each $\alpha\=(\alpha_1,\,\dots,\,\alpha_n)\in\{0,\,\cdots,\,l-1\}^n$, denote
	$x_\alpha\=\bigl(\frac{2\alpha_1+1-l}{l},\,\cdots,\,\frac{2\alpha_n+1-l}{l}\bigr)$
	and consider the conformal affine map
	$T_\alpha\:x\mapsto T_\alpha x\= l(x-x_\alpha)$
	that maps $C^n_\alpha$ to $\stdCube^n$ and $x_\alpha$ to $0$. 

    We verify conditions~(i) and (ii) in Definition~\ref{d: refinement}.
	
	\smallskip
	
	(i) Fix $\alpha\=(\alpha_1,\,\dots,\,\alpha_n)\in\{0,\,\cdots,\,l-1\}^n$. 
    Then for all $1 \leq  i,\,j \leq  n$ and $a,\,b\in\{-1,\,1\}$,
	\begin{equation*}
	T_{\alpha}\bigl(H^n_{ij}(a,b)\bigr) =\{(y_1,\,\dots,\,y_n)\in T_\alpha(\stdCube^n): ay_i-by_j\geq 2(b\alpha_j-a\alpha_i)+(b-a)(1-l)\}.
    \end{equation*}
    If $2(b\alpha_j-a\alpha_i)+(b-a)(1-l) \leq  0$, then $H^n_{ij}(a,b)\subseteq T_{\alpha}\bigl(H^n_{ij}(a,b)\bigr)$; see Figure~\ref{f: Knl is refinement} for an illustration. Otherwise, since $2(b\alpha_j-a\alpha_i)+(b-a)(1-l)\in 2\Z$, we have $H^n_{ij}(a,b)\subseteq[-1,1]^n\subseteq T_{\alpha}\bigl(H^n_{ij}(-a,-b)\bigr)$.

    By the above discussion, either $T_\alpha^{-1}\bigl(H^n_{ij}(a,b)\bigr)\subseteq H^n_{ij}(a,b)$ or $T_\alpha^{-1}\bigl(H^n_{ij}(a,b)\bigr)\subseteq H^n_{ij}(-a,-b)$. Thus, for each {\fundSet} $\cH_*\subseteq\cH^n$, we can construct a {\fundSet} $\cH'_*\subseteq\cH^n$ for which $T_{\alpha}^{-1}(\bigcap\cH_*)\subseteq \bigcap\cH'_*$. It follows that each element of $\cK_n(C_\alpha^n)=T_{\alpha
    }^*\cK_n$ is contained in some element of $\cK_n$. Now condition~(i) is verified.

    \begin{figure}[h]
        \centering
        \includegraphics[width=0.7\linewidth]{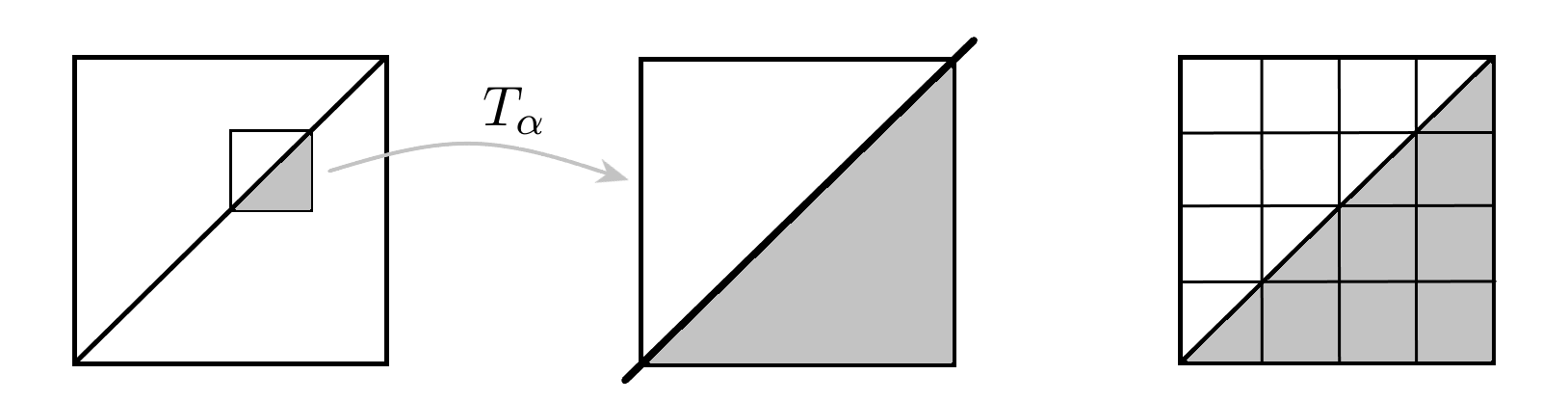}
        \caption{}
        \label{f: Knl is refinement}
    \end{figure}
	
	(ii) First, we show that each element in $\cH^n$ is a union of cells in $\cK_{n,l}$.
    Fix $H_0\in\cH^n$. It can be directly verified (see also Figure~\ref{f: Knl is refinement}) that $H_0$ is covered by $\{C^n_\alpha:\alpha\in\{0,\,\dots,\,l-1\}^n,\,x_\alpha\in H_0\}$, and that
    for each $\alpha\in\{0,\,\dots,\,l-1\}^n$ with $x_\alpha\in H_0$, $H_0\cap C^n_\alpha$ is either $C^n_\alpha$ or $T_\alpha^{-1}(H_0)$. Since it follows from Lemma~\ref{l: ort Latt: Ko Union and Intersect}~(i) that $C^n_\alpha$ and $T_\alpha^{-1}(H_0)$ are unions of cells in $\cK_{n}(C^n_{\alpha}) = T_\alpha^*\cK_n\subseteq\cK_{n,l}$, $H_0$ is a union of cells in $\cK_{n,l}$.
    Combining this with the definition of $\Ko_n$ (cf.~(\ref{e: ort Latt: definition Ko})) and Lemma~\ref{l: properties of cell decompositions}~(iv), we get that each element in $\Ko_n$ is a union of cells in $\cK_{n,l}$. 
    
    It remains to show that each element in $\cK_n$ is a union of cells in $\cK_{n,l}$. Consider $H'\in\partial\cK_n$. By Lemma~\ref{l: orthotopic Lattes: structure of dK_n}, suppose $H'=H\cap F$ for some $H\in\Ko_n$ and some facet $F$ of $\stdCube^n$. 
    Clearly $F$ is a union of cells in $\cK_{n,l}$ (which follows immediately from the definition of $\cK_{n,l}$). Thus, since $H$ is a union of cells in $\cK_{n,l}$ as shown above, by Lemma~\ref{l: properties of cell decompositions}~(iv), $H'$ is a union of cells in $\cK_{n,l}$. In conclusion, each element in $\cK_n$ is a union of cells in $\cK_{n,l}$.
\end{proof}

Now we extend the discussion on symmetric decompositions to orthotopes.

For each standard $n$-dimensional orthotope $Q$, let $\pi$ be the cubic-stretching (cf.~(\ref{e: ort Latt: cubic stretching})) of $Q$ and define
\begin{equation*}
\cK_n(Q)\=\pi^*\cK_n \quad\text{ and }\quad\cK_{n,l}(Q)\=\pi^*\cK_{n,l}
\end{equation*}
for each $l\in\N$.
Then $\cK_n(Q)$ and $\cK_{n,l}(Q)$ are cell decompositions which refine $\Rec_n(Q)$, and $\cK_{n,l}(Q)$ refines $\cK_n(Q)$. For each $n$-dimensional orthotope $R$, we define $\cK_n(R)\=L^*\cK_n(L(R))$ and $\cK_{n,k}(R)\=L^*\cK_{n,l}(L(R))$,  where $L\in \Isom(\R^n)$ maps $R$ to a standard orthotope $L(R)$. 

Note that the definitions of $\cK_n(R)$ and $\cK_{n,l}(R)$ do not depend on the choice of $L$, and this follows from the observation that for each standard orthotope $Q$, $\cK_n(Q)$ is invariant under \defn{$Q$-symmetries}, i.e., $\gamma\in \Isom(\R^n)$ for which $\gamma(Q)=Q$.
\begin{rem}\label{r: ort Latt: Q-symm -- cubic symm}
Indeed, consider $Q\=\prod_{i=1}^n[-a_i,a_i]$ and a $Q$-symmetry $\gamma\in\Isom(\R^n)$. It can be checked that $\gamma$ is also an $\stdCube^n$-symmetry, and thus, by Remark~\ref{r: ort Latt: sym(I) = DE}, there exists a permutation $\sigma\in S_n$ such that $\gamma(e_i)\in\bigl\{-e_{\sigma(i)},\,e_{\sigma(i)}\bigr\}$ for all $1 \leq  i \leq  n$. 
Moreover, $\gamma(Q)=Q$ guarantees that $a_i=a_{\sigma(i)}$ for all $1 \leq  i \leq  n$, which yields that $\pi\circ\gamma\circ\pi^{-1}\in\Sym(\stdCube^n)$.
Then since $\cK_n$ is invariant under $\stdCube^n$-symmetries (see Lemma~\ref{l: orth Latt: Ko is invariant}~(i)), we have
\begin{equation}\label{e: K(Q) is invariant}
\begin{aligned}
\gamma^*\cK_n(Q)=\gamma^*\pi^*\cK_n=\pi^*\bigl(\pi\circ\gamma\circ\pi^{-1}\bigr)^*\cK_n=\pi^*\cK_n=\cK_n(Q).
\end{aligned}
\end{equation}
Likewise, $\gamma^*\cK_{n,l}(Q)=\cK_{n,l}(Q)$ since $\cK_{n,l}$ is invariant under $\stdCube^n$-symmetries (see Remark~\ref{r: ort Latt: Kn,l is invariant}).
\end{rem}

\subsection{Cellular Markov partitions of orthotopic Latt\`es maps}\label{subsct: orthotopic Lattes are MKV}
Here we prove Theorem~\ref{t: rectangle=>Markov}.
We begin by recalling some basic results on crystallographic groups. For more details we refer the reader to \cite[Chapter~2]{SV93}. Fix a dimension $n\in\N$.
Let $\Gamma<\Isom(\R^n)$ be a crystallographic group and $P$ be a normal fundamental polyhedron of $\Gamma$. For each facet $F$ of $P$, there exists a unique $\gamma_F\in\Gamma$ for which $\gamma_F(P)\cap P=F$.\footnote{Such an element $\gamma_F$ is called an \defn{adjacency transformation} (cf.~\cite[Chapter~2]{SV93}). 
}

Let $\Gamma$ be an orthotopic crystallographic group, and $Q\=\prod_{i=1}^{n}[0,a_i]$ be a normal fundamental domain. The above discussion implies that the cover $\{\gamma(Q):\gamma\in\Gamma\}$ is given by a lattice, i.e.,
\begin{equation}\label{e: ort Latt: fund domains are lattice}
    \{\gamma(Q):\gamma\in\Gamma\}=\{\Sigma v+Q : v\in\Z^n\}, 
\text{ where } \Sigma\=\diag(a_1,\dots,a_n).
\end{equation}

In what follows, we show that each $H\in\cK_n(Q)$ is mapped by the projection $\pi\:\R^n\to\R^n/\Gamma$ homeomorphically onto its image $\pi(H)$. For this, we need the following lemma.

\begin{lemma}\label{l: rectangle Lattes: r(x) in H => r(x)=x}
	For each $H\in\cK_n$, each $x\in H$, and each $\gamma\in \Sym(\stdCube^n)$, if $\gamma(x)\in H$, then $\gamma(x)=x$.
\end{lemma}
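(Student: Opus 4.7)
The plan is to first reduce to the case where $H$ is a top-dimensional cell of $\cK_n$, and then handle that case by exploiting the sign-and-permutation structure of $\Sym(\stdCube^n)$.

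For the reduction, if $H=\{0\}$ then $x=0$, and since $\gamma$ is an isometry of $\R^n$ preserving the centrally symmetric set $\stdCube^n$ it must fix the centroid $0$, so $\gamma(x)=0=x$. Otherwise, I would show by induction on $n$ that every other cell $H\in\cK_n$ is contained in some top-dimensional $\tH\in\Ko_n$. Concretely, if $H\in\partial\cK_n$ then by~\eqref{e: orth Latt: symmetric decomp} we have $H\in\Ko_d(c)$ for some $c\in\Cube_n^{[d]}$ with $d<n$; picking a facet $F$ of $\stdCube^n$ with $c\subseteq F$, the inductive hypothesis applied to $\cK_{n-1}(F)$ produces a top-dimensional $\tH_F\in\Ko_{n-1}(F)$ with $H\subseteq\tH_F$, and Lemma~\ref{l: orthotopic Lattes: K_n-1 in dK_n} then supplies $\tH\in\Ko_n$ with $\tH\cap F=\tH_F$, so $H\subseteq\tH$. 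Since $x,\gamma(x)\in H\subseteq\tH$, we are reduced to proving the lemma when $H$ is top-dimensional.

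For the top-dimensional case, I would write $H=\bigcap\cH_*$ for a fundamental subset $\cH_*$ and $\gamma=DE$ as in Remark~\ref{r: ort Latt: sym(I) = DE}, with $D=\diag(\delta_1,\dots,\delta_n)$ and $E$ corresponding to a permutation $\sigma\in S_n$, so that $\gamma(x)_k=\delta_k x_{\sigma(k)}$. The hypothesis $\gamma(x)\in H$ is then the system of constraints $x\in H^n_{\sigma(i)\sigma(j)}(a\delta_i,b\delta_j)$ indexed by the $H^n_{ij}(a,b)\in\cH_*$. Whenever such a pulled-back half-space is opposite to one already present in $\cH_*$, the two inequalities collapse to an equality, forcing $x$ onto the bounding hyperplane $ax_i=bx_j$. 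A convenient simplification is to first use Lemma~\ref{l: orthotopic Lattes: property of K^o_n}~(i) to replace $H$ by a symmetric copy inside $[0,1]^n$, which fixes the sign choices on the diagonal half-spaces; then, tracking the cycle/sign structure of $(\delta,\sigma)$ through the collection of forced equalities should yield $\delta_k x_{\sigma(k)}=x_k$ for every $k$, i.e., $\gamma(x)=x$.

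The hard part is the bookkeeping in this last step: one must verify that every nontrivial $\gamma\in\Sym(\stdCube^n)$ produces enough opposite-direction pulled-back inequalities to pin $x$ down on the fixed hyperplane of $\gamma$. Conceptually, this is the classical statement that a closed Weyl chamber of the hyperoctahedral Coxeter group (type $B_n$) is a strict fundamental domain for its Weyl group action, since the walls $\{x_i=0\}$, $\{x_i=x_j\}$, and $\{x_i+x_j=0\}$ for $1\le i<j\le n$ are precisely the bounding hyperplanes of the nondegenerate half-spaces in $\cH^n$ (cf.~\eqref{e: ort Latt: Hi and Hij}), and the chambers of this reflection arrangement intersected with $\stdCube^n$ are exactly the top-dimensional cells in $\Ko_n$; one could alternatively invoke this Coxeter-theoretic fact as a black box.
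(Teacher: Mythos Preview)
Your overall strategy matches the paper's: reduce to an element of $\Ko_n$, conjugate into $[0,1]^n$ via Lemma~\ref{l: orthotopic Lattes: property of K^o_n}~(i), and then exploit the sign-and-permutation description of $\Sym(\stdCube^n)$. Two comments.

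First, your reduction is more elaborate than needed. The paper does not insist on a \emph{top-dimensional} cell; it simply observes (from Lemma~\ref{l: orthotopic Lattes: structure of dK_n}) that every element of $\cK_n$ is contained in some $H\in\Ko_n$, and any such $H$ is by definition $\bigcap\cH_*$ for a fundamental subset. Your inductive argument via Lemma~\ref{l: orthotopic Lattes: K_n-1 in dK_n} works but is unnecessary.

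Second, the step you flag as ``hard bookkeeping'' is genuinely the crux, and your sketch (``opposite pulled-back half-spaces collapse to equalities'') does not quite capture what is needed: under $\gamma$ the index pairs get permuted, so the pulled-back half-space $H^n_{\sigma(i)\sigma(j)}(\cdot,\cdot)$ is generally not opposite to anything with the \emph{same} index pair in $\cH_*$. The paper's argument is more indirect. After placing $H\subseteq[0,1]^n$ (which forces $\delta_ix_i=x_i$ for all $i$, so $\gamma(x)=\sum_ix_ie_{\tau(i)}$), one works cycle by cycle: for a cycle $(i_1\cdots i_l)$ of $\tau$, assuming $x_{i_1}>x_{i_2}$ one reads off from the coordinates of $\gamma(x)$ that $H^n_{i_2i_{j+1}}(-1,-1)\notin\cH_*$, hence $H^n_{i_2i_{j+1}}(1,1)\in\cH_*$ by the fundamental-subset condition, hence $x_{i_2}\ge x_{i_{j+1}}$; iterating gives $x_{i_1}>x_{i_{l+1}}=x_{i_1}$, a contradiction. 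So the mechanism is not ``collapse to equality'' but a propagation: a strict inequality among the coordinates of $\gamma(x)$ forces a membership in $\cH_*$, which in turn constrains $x$.

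Your Coxeter-theoretic alternative is correct and is a genuinely different route. The hyperplanes bounding the nondegenerate half-spaces in $\cH^n$ are exactly the reflecting hyperplanes of the type-$B_n$ Coxeter group $\Sym(\stdCube^n)$, so the maximal elements of $\Ko_n$ are closed Weyl chambers intersected with $\stdCube^n$, and the statement is the standard fact that a closed chamber is a strict fundamental domain. This is cleaner conceptually and avoids the cycle bookkeeping, at the cost of importing an outside theorem; the paper's proof is self-contained and elementary but combinatorially delicate.
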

\begin{proof}
Let $H\in\cK_n$, $x\=(x_1,\,\dots,\,x_n)\in H$, and $\gamma\in\Sym(\stdCube^n)$ satisfy $\gamma(x)\in H$.

Since, by Lemma~\ref{l: orthotopic Lattes: structure of dK_n}, each element in $\cK_n$ is contained in some element of $\Ko_n$, we may assume $H\in\Ko_n$. By the definition of $\Ko_n$ (cf.~(\ref{e: ort Latt: definition Ko})), suppose $H=\bigcap \cH_*$ for a {\fundSet} $\cH_*\subseteq\cH^n$ (cf.~(\ref{e: ort Latt: fund subset})).
By Lemma~\ref{l: orthotopic Lattes: property of K^o_n}~(i), we may assume $H\subseteq [0,1]^n$.
	
	By Remark~\ref{r: ort Latt: sym(I) = DE}, suppose $\gamma(e_i)=\delta_i e_{\tau(i)}$, $\delta_i\in\{-1,\,1\}$, for each $1 \leq  i \leq  n$, where $\tau\in S_n$ is a permutation. Set $\tau=\sigma_1\cdots \sigma_k$, where $\sigma_i$, $1 \leq  i \leq  k$, are disjoint cycles. 
    
	Since $x=\sum_{i=1}^nx_ie_i$ and $\gamma(x)=\sum_{i=1}^n\delta_i x_ie_{\tau(i)}$ are in $H\subseteq[0,1]^n$, we have $\gamma(x)=\sum_{i=1}^nx_ie_{\tau(i)}$. Thus, to show $\gamma(x)=x$, it suffices to show, for each $1\leq j\leq k$, that $\sum_{i=1}^n x_ie_i=\sum_{i=1}^nx_ie_{\sigma_j(i)}$.

    Let $\sigma\=(i_1 i_2\dots i_l)$ be an arbitrary cycle among $\sigma_i$, $1\leq i\leq k$. We enumerate $\bigl\{i'_1,\,\dots,\,i'_{n-l}\bigr\}\=\{1,\,\dots,\,n\}\smallsetminus\{i_1,\,\dots,\,i_l\}$. Then $x=\sum_{i=1}^n x_ie_i=\sum_{s=1}^{l}x_{i_s}e_{i_s}+\sum_{t=1}^{n-l}x_{i'_t}e_{i'_t}$. Since $\sigma_i$, $1 \leq  i \leq  k$, are disjoint cycles, we have $\bigl\{\tau(i'_1),\,\dots,\,\tau \bigl( i'_{n-l} \bigr)\bigr\}=\bigl\{i'_1,\,\dots,\,i'_{n-l}\bigr\}$ and
    \begin{equation}\label{e: ort Latt: cycle of x_i}
        \gamma(x)=\sum_{i=1}^n x_i e_{\tau(i)}=\sum_{s=1}^{l}x_{i_s}e_{\sigma(i_s)}+\sum_{t=1}^{n-l}x_{i'_t}e_{\tau(i'_t)}=\sum_{s=1}^{l}x_{i_s}e_{i_{s+1}}+\sum_{t=1}^{n-l}x_{i'_t}e_{\tau(i'_t)},
    \end{equation}
    where we follow the convention that $i_{l+1}\=i_1$.
    
    We show $x_{i_1}=x_{i_2}=\cdots=x_{i_l}$.
	We argue by contradiction and assume (without loss of generality) that $x_{i_1}\neq x_{i_2}$. First, assume $x_{i_1}>x_{i_2}$. 
    Under such an assumption, suppose $x_{i_1}>x_{i_j}$ for some $2 \leq  j \leq  l$. By~(\ref{e: ort Latt: cycle of x_i}) and (\ref{e: ort Latt: Hi and Hij}), $\gamma(x)\notin H^n_{i_2 i_{j+1}}(-1,-1)$. Then since $\gamma(x)\in H=\bigcap\cH_*$, we have $H^n_{i_2 i_{j+1}}(-1,-1)\notin\cH_*$, and thus, by the definition of {\fundSet}s (cf.~(\ref{e: ort Latt: fund subset})), $H^n_{i_2 i_{j+1}}(1,1)\in\cH_*$, 
    which implies
	$x_{i_1}>x_{i_2} \geq  x_{i_{j+1}}$. This, inductively, yields $x_{i_1}>x_{i_{l+1}}=x_{i_1}$, which is a contradiction.
    Similarly, $x_{i_1}<x_{i_2}$ leads to a contradiction. 

    Hence, $x_{i_1}=x_{i_2}=\cdots=x_{i_l}$ and $\sum_{i=1}^n x_ie_i=\sum_{i=1}^nx_ie_{\sigma(i)}$.
	Since $\sigma$ is an arbitrary cycle among $\sigma_1,\,\dots,\,\sigma_k$, we have $\gamma(x)=x$.
\end{proof}

\begin{cor}\label{c: rectangle Lattes: r(x) in H => r(x)=x}
	Let $R$ be an orthotope of dimension $n\in\N$, and $\gamma$ be a $R$-symmetry. If $H\in\cK_n(R)$ and $x\in H$ satisfy $\gamma(x)\in H$, then $\gamma(x)=x$.
\end{cor}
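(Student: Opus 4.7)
My plan is to reduce the orthotope statement to the cube statement in Lemma~\ref{l: rectangle Lattes: r(x) in H => r(x)=x} by conjugating through an isometry and then through the cubic-stretching. The key observation is that Remark~\ref{r: ort Latt: Q-symm -- cubic symm} already tells us precisely how $Q$-symmetries correspond to $\stdCube^n$-symmetries under cubic-stretching, and that $\cK_n(Q) = \pi^*\cK_n$ by definition. So all structural data transfer correctly under the reduction.

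Concretely, first I would choose an isometry $L \in \Isom(\R^n)$ with $L(R) = Q$ a standard $n$-dimensional orthotope of the form $\prod_{i=1}^n [-a_i,a_i]$, and set $\gamma' \= L \gamma L^{-1}$, $x' \= L(x)$, and $H' \= L(H)$. Then $\gamma'$ is a $Q$-symmetry (since $\gamma'(Q) = L\gamma(R) = L(R) = Q$), and we have $x' \in H'$, $\gamma'(x') \in H'$, and $H' \in \cK_n(Q) = L\cK_n(R)$ by the definition of $\cK_n(R)$.

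Next, letting $\pi$ denote the cubic-stretching of $Q$ (cf.~\eqref{e: ort Latt: cubic stretching}), I would invoke Remark~\ref{r: ort Latt: Q-symm -- cubic symm} to conclude that $\tgamma \= \pi \gamma' \pi^{-1}$ is an $\stdCube^n$-symmetry. Since $\cK_n(Q) = \pi^* \cK_n$ by definition of $\cK_n(Q)$, the set $\tH \= \pi(H')$ belongs to $\cK_n$, and setting $\tx \= \pi(x')$, we have $\tx \in \tH$ together with
\begin{equation*}
\tgamma(\tx) = \pi\gamma'\pi^{-1}(\pi(x')) = \pi(\gamma'(x')) \in \pi(H') = \tH.
\end{equation*}

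Lemma~\ref{l: rectangle Lattes: r(x) in H => r(x)=x} then applies directly to yield $\tgamma(\tx) = \tx$, i.e., $\pi(\gamma'(x')) = \pi(x')$. Since $\pi$ and $L$ are bijections, unwinding gives $\gamma'(x') = x'$ and hence $\gamma(x) = x$, as desired. I do not expect any substantial obstacle: the proof is essentially a bookkeeping exercise tracking how cells, symmetries, and points transform under $L$ and $\pi$; the real work was done in Lemma~\ref{l: rectangle Lattes: r(x) in H => r(x)=x} and in Remark~\ref{r: ort Latt: Q-symm -- cubic symm}, and the only mild point to verify is that the cubic-stretching indeed conjugates $Q$-symmetries to $\stdCube^n$-symmetries, which is precisely the content of that remark.
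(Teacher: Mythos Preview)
Your proposal is correct and follows essentially the same approach as the paper: conjugate first by an isometry $L$ to a standard orthotope $Q$, then by the cubic-stretching $\pi$ to $\stdCube^n$, invoke Remark~\ref{r: ort Latt: Q-symm -- cubic symm} to see that the conjugated map is an $\stdCube^n$-symmetry, and apply Lemma~\ref{l: rectangle Lattes: r(x) in H => r(x)=x}. The paper's proof is the same argument written more tersely with slightly different notation.
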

\begin{proof}
	Let $L\in \Isom(\R^n)$ maps $R$ to a standard orthotope $Q\=L(R)$. Let $\pi$ be the cubic stretching of $Q$. Then $H_1\=\pi(L(H))\in\cK_n$ and $x_1\=\pi(L(x))\in H_1$ satisfy 
	$\gamma_1(x_1)=\bigl(\pi\circ\bigl(L\circ\gamma\circ L^{-1}\bigr)\circ\pi^{-1}\bigr)(\pi(L(x)))=\pi(L(\gamma(x)))\in H_1$. Clearly $L\circ\gamma\circ L^{-1}$ is a $Q$-symmetry, and $\gamma_1\=\pi\circ L\circ\gamma\circ L^{-1}\circ\pi^{-1}\in\Sym(\stdCube^n)$ (cf.~Remark~\ref{r: ort Latt: Q-symm -- cubic symm}).
	Then by Lemma~\ref{l: rectangle Lattes: r(x) in H => r(x)=x}, $\gamma_1(x_1)=x_1$, and thus $\gamma(x)=x$.
\end{proof}

\begin{lemma}\label{l: rectangle Lattes: H cap Gx = x}
	Let $\Gamma$ be an orthotopic crystallographic group and $Q$ be an orthotope that is a normal fundamental domain of $\Gamma$. Then for each $H\in\cK_n(Q)$ and each point $x\in H$, we have $H\cap\Gamma x=\{x\}$.
\end{lemma}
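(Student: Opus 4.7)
The plan is to reduce the lemma to Corollary~\ref{c: rectangle Lattes: r(x) in H => r(x)=x} by replacing the given element $\gamma\in\Gamma$ with an explicit $Q$-symmetry having the same effect on $x$. Given $x\in H$ and $\gamma\in\Gamma$ with $\gamma(x)\in H$, by equation~(\ref{e: ort Latt: fund domains are lattice}) there is a unique $v\in\Z^n$ with $\gamma(Q)=\Sigma v+Q$. I would introduce the isometry $s$ of $\R^n$ defined by $s(z)\=\gamma(z)-\Sigma v$; since $s(Q)=\gamma(Q)-\Sigma v=Q$, the map $s$ is a $Q$-symmetry (though not necessarily in $\Gamma$), and $\gamma(z)=s(z)+\Sigma v$ for every $z\in\R^n$.

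If $v=0$, then $\gamma=s\in\Sym(Q)$, and Corollary~\ref{c: rectangle Lattes: r(x) in H => r(x)=x} applied with $R=Q$ immediately yields $\gamma(x)=x$. Otherwise, since both $s(x)$ and $\gamma(x)=s(x)+\Sigma v$ belong to $Q$, a coordinatewise analysis forces $v\in\{-1,0,1\}^n$; moreover, $s(x)$ must lie in $Q\cap(Q-\Sigma v)$, namely the face of $Q$ whose $i$-th coordinate equals $0$ when $v_i=1$, equals $a_i$ when $v_i=-1$, and is free in $[0,a_i]$ when $v_i=0$.

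The key construction is then an explicit $Q$-symmetry realizing the map $x\mapsto\gamma(x)$. Let $\rho_v$ be the isometry of $\R^n$ that reflects each coordinate $i$ with $v_i\ne 0$ through $a_i/2$ and fixes the remaining coordinates; as a composition of coordinatewise reflections, $\rho_v$ lies in $\Sym(Q)$ by the orthotopic (product) structure of $Q$. A direct coordinate inspection---using that $s(x)_i\in\{0,a_i\}$ exactly when $v_i\ne 0$---yields $\rho_v(s(x))=s(x)+\Sigma v=\gamma(x)$. Setting $\tilde s\=\rho_v\circ s\in\Sym(Q)$, we obtain $\tilde s(x)=\gamma(x)\in H$, so Corollary~\ref{c: rectangle Lattes: r(x) in H => r(x)=x} applied to the $Q$-symmetry $\tilde s$ forces $\tilde s(x)=x$, and hence $\gamma(x)=x$.

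The main step is precisely the construction of $\rho_v$ and the recognition that, although a generic element of $\Gamma$ may permute the lattice $\{\Sigma v+Q:v\in\Z^n\}$ nontrivially, whenever it carries one point of $Q$ to another, composition with the natural coordinatewise reflection relating the two opposite facets of $Q$ converts it into a bona fide $Q$-symmetry with the same point-image on $x$. This reduction crucially relies on the product orthotopic shape of $Q$, which ensures that every pair of opposite facets of $Q$ is related by an element of $\Sym(Q)$; for fundamental domains of more general shape, no such uniform replacement exists and the argument genuinely breaks down.
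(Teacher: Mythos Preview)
Your proof is correct and follows the same overall strategy as the paper: given $\gamma\in\Gamma$ with $\gamma(x)\in H$, construct a $Q$-symmetry $\tilde\gamma$ satisfying $\tilde\gamma(x)=\gamma(x)$, then invoke Corollary~\ref{c: rectangle Lattes: r(x) in H => r(x)=x}. The only difference lies in how that $Q$-symmetry is manufactured. The paper uses the normal fundamental domain property directly: $\gamma(Q)\cap Q$ is a common face $c$, the preimage $c^*\=\gamma^{-1}(c)$ is a face of $Q$ containing $x$, and the isometry $\gamma|_{c^*}\colon c^*\to c$ between faces of $Q$ is extended to a $Q$-symmetry $\tgamma$. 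You instead use the lattice description~(\ref{e: ort Latt: fund domains are lattice}) to factor $\gamma$ as a translation by $\Sigma v$ following a $Q$-symmetry $s$, and then compose with the explicit coordinate reflection $\rho_v$. Your route is more explicit and entirely self-contained---it never appeals to an abstract ``extend the face isometry'' step---while the paper's version is shorter and more geometric; but the substance of the argument, and the reliance on Corollary~\ref{c: rectangle Lattes: r(x) in H => r(x)=x}, is identical.
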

\begin{proof}
	Fix $H\in\cK_n(Q)$ and $x\in H$. Let $\gamma\in\Gamma$ satisfy $\gamma(x)\in H\cap\Gamma x$.
	By the definition of normal fundamental domains, $\gamma(Q)\cap Q$ is a common face $c\in\Rec_n(Q)\cap\Rec_n(\gamma(Q))$. Let $c^*\in\Rec_n(Q)$ be such that $x\in c^*$ and $\gamma(c^*)=c$. Since $\gamma$ is an isometry, we can extend $\gamma|_{c^*}$ to a $Q$-symmetry $\tgamma$. Then $\tgamma(x)=\gamma(x)\in H$.
	By Corollary~\ref{c: rectangle Lattes: r(x) in H => r(x)=x}, we have $\gamma(x)=\tgamma(x)=x$.
\end{proof}

Let $(\Gamma,h,A)$ be an orthotopic Latt\`es triple on an {\orclcnR} $n$-manifold $\mfd^n$, and $Q\=\prod_{i=1}^n[0,a_i]$ be a fundamental domain of $\Gamma$. Suppose $A=\lambda I$ for some $\lambda\in\N$. Now we show that $\cK_n(Q)$ and $\cK_{n,\lambda}(Q)$ induce, via $h$, cell decompositions of $\mfd^n$. 

\begin{prop}\label{p: ort Latt: induced cell decomp}
	Let $(\Gamma,h,A)$ be an orthotopic Latt\`es triple on an {\orclcnR} $n$-manifold $\mfd^n$, where $\Gamma$ has a fundamental domain $Q\=\prod_{i=1}^n[0,a_i]$, and $A=\lambda I$ for some $\lambda\in\N$. 
    Then the restriction of $h$ to each cell $c$ in $\cK_n(Q)\cup\cK_{n,\lambda}(Q)$ is homeomorphic. Moreover,
	\begin{equation*}
	\cD_0\=\{h(c):c\in\cK_n(Q)\}\quad\text{ and }\quad\cD_1\=\{h(c):c\in\cK_{n,\lambda}(Q)\}
\end{equation*}
	are cell decompositions of $\mfd^n$, and $\cD_1$ is a refinement of $\cD_0$.
\end{prop}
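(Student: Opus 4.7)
The approach is to lift $\cK_n(Q)$ and $\cK_{n,\lambda}(Q)$ to $\Gamma$-invariant cell decompositions of $\R^n$ and descend along $h$. First, I would verify that $h|_c$ is a homeomorphism for each $c\in\cK_n(Q)\cup\cK_{n,\lambda}(Q)$. Strong automorphicity of $h$ reduces injectivity of $h|_c$ to the condition $c\cap\Gamma x=\{x\}$ for every $x\in c$, which is exactly Lemma~\ref{l: rectangle Lattes: H cap Gx = x} when $c\in\cK_n(Q)$; compactness of $c$ then upgrades injectivity to homeomorphy, so $h(c)$ is a topological cell of dimension $\dim c$. The refinement $\cK_{n,\lambda}(Q)\leq\cK_n(Q)$ transports this conclusion to cells of the finer decomposition.

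Next, I would assemble $\widetilde{\cK}_n\=\bigcup_{\gamma\in\Gamma}\gamma(\cK_n(Q))$ and $\widetilde{\cK}_{n,\lambda}\=\bigcup_{\gamma\in\Gamma}\gamma(\cK_{n,\lambda}(Q))$. By Lemma~\ref{l: orth Latt: Ko is invariant}(ii) these equal $\bigcup_{\gamma}\cK_n(\gamma Q)$ and $\bigcup_{\gamma}\cK_{n,\lambda}(\gamma Q)$, patching the intrinsic symmetric decompositions on each tile of the lattice cover~(\ref{e: ort Latt: fund domains are lattice}). To see these form cell decompositions of $\R^n$, I would check compatibility across each shared face $F=Q\cap\gamma Q$: from~(\ref{e: orth Latt: symmetric decomp}), both $\cK_n(Q)|_F$ and $\cK_n(\gamma Q)|_F$ coincide with the intrinsic symmetric decomposition of $F$ as an $(n-1)$-orthotope, using the $Q$-symmetry invariance from Remark~\ref{r: ort Latt: Q-symm -- cubic symm}. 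Hence $\widetilde{\cK}_n$ and $\widetilde{\cK}_{n,\lambda}$ are $\Gamma$-invariant cell decompositions of $\R^n$, with the latter refining the former. Descending through $h$, the $\Gamma$-invariance yields $\{h(c):c\in\widetilde{\cK}_n\}=\cD_0$ and similarly for $\cD_1$. The four axioms of Definition~\ref{d: cell decomposition} for $\cD_0$ then follow: $\bigcup\cD_0=h(Q)=\mfd^n$, local finiteness holds because $\cD_0$ is finite, and $\cellbound h(c)=h(\cellbound c)$ is a union of cells in $\cD_0$ by homeomorphy of $h|_c$. For the disjointness axiom, if $z\in\cellint h(c_1)\cap\cellint h(c_2)$ with $z=h(x_i)$ and $x_i\in\cellint c_i$, then $x_2=\gamma x_1$ for some $\gamma\in\Gamma$, so the cell $\gamma c_1\in\widetilde{\cK}_n$ shares the interior point $x_2$ with $c_2$, forcing $c_2=\gamma c_1$ in $\widetilde{\cK}_n$ and hence $h(c_1)=h(c_2)$. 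The refinement of $\cD_1$ over $\cD_0$ is inherited from $\widetilde{\cK}_{n,\lambda}$ over $\widetilde{\cK}_n$.

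The main obstacle I foresee is the face-compatibility step. Although the guiding principle is clear—the restriction $\cK_n(Q)|_F$ depends only on $F$ as an $(n-1)$-orthotope, so it matches $\cK_n(\gamma Q)|_F$ automatically—establishing this intrinsic character requires tracing the cone construction of $\cK_n$ through the symmetric decomposition and carefully invoking $Q$-symmetry invariance, since an adjacency transformation $\gamma\in\Gamma$ relating $Q$ and $\gamma Q$ need not itself be a $Q$-symmetry; only its restriction to the shared face $F$ agrees with (the restriction of) some $Q$-symmetry.
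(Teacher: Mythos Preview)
Your proposal is correct and rests on the same key ingredient as the paper's proof: the restriction of an adjacency transformation $\gamma\in\Gamma$ to the shared face $F=Q\cap\gamma Q$ extends to a $Q$-symmetry, so that the $Q$-symmetry invariance of $\cK_n(Q)$ from Remark~\ref{r: ort Latt: Q-symm -- cubic symm} forces the decompositions to match across $F$. The difference is purely organizational. You first assemble a global $\Gamma$-invariant decomposition $\widetilde{\cK}_n$ of $\R^n$ (which requires the face-compatibility check you correctly isolate as the crux), and then descend; disjointness for $\cD_0$ is then inherited from $\widetilde{\cK}_n$. The paper skips the global construction and verifies the axioms of Definition~\ref{d: cell decomposition} for $\cD_0$ directly from $\cK_n(Q)$: in the disjointness argument, given $\sigma_0,\tau_0\in\cK_n(Q)$ with $h(\brCellint{\sigma_0})\cap h(\brCellint{\tau_0})\ne\emptyset$, it picks the faces $X,Y\in\Rec_n(Q)$ containing $\brCellint{\sigma_0},\brCellint{\tau_0}$, shows $\gamma(X)=Y$ for the relevant $\gamma\in\Gamma$, extends $\gamma|_X$ to a $Q$-symmetry $\tgamma$, and concludes $\gamma(\sigma_0)=\tgamma(\sigma_0)=\tau_0$. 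This is exactly your face-compatibility computation, just deployed locally rather than globally. Your route is conceptually cleaner; the paper's is more economical. One small point: the identity $\gamma(\cK_n(Q))=\cK_n(\gamma Q)$ follows from the definition of $\cK_n(R)$ for orthotopes via isometries to standard orthotopes, not from Lemma~\ref{l: orth Latt: Ko is invariant}(ii), which is stated for cubes.
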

\begin{proof}
	By Lemma~\ref{l: rectangle Lattes: H cap Gx = x}, each $c\in\cK_n(Q)$ contains at most one element in each orbit of $\Gamma$. Then $h|_c$ is injective. Since $c$ is compact, $h|_c$ is also closed (since a continuous map between compact Hausdorff spaces is closed; cf.~\cite[Section~26]{Mu00}), and thus a homeomorphism. Hence, $\cD_0$ is indeed a collection of cells.

    Since $\Gamma$ is cocompact, we have $h(\R^n)=\mfd^n$ (which is a direct consequence of~\cite[Theorems~1.5 and~1.7]{Ka22}). 
	Since $Q$ is a fundamental domain of $\Gamma$, we have $h(Q)=h(\R^n)=\mfd^n$. Then $\bigcup\cD_0=\mfd^n$, which verifies condition~(i) in Definition~\ref{d: cell decomposition}.
    Conditions~(iii) and (iv) in Definition~\ref{d: cell decomposition} follow directly from the fact that $\cK_n(Q)$ is a cell decomposition of finite cardinality. 
    
    It remains to verify condition~(ii).
	Let $\sigma,\,\tau\in\cD_0$ satisfy $\brCellint{\sigma}\cap\brCellint{\tau}\neq\emptyset$ and suppose $\sigma=h(\sigma_0)$, $\tau=h(\tau_0)$ for some $\sigma_0,\,\tau_0\in\cK_n(Q)$. 
	Fix $p\in\brCellint{\sigma}\cap\brCellint{\tau}$, and let $x\in\stdCellint{\sigma_0}$, $y\in\stdCellint{\tau_0}$ be such that $h(x)=h(y)=p$. Since $\cK_n(Q)$ is a refinement of $\Rec_n(Q)$, by Lemma~\ref{l: cell decomp: refinement inte() contained in inte()} there exist $X,\,Y\in \Rec_n(Q)$ for which $\stdCellint{\sigma_0}\subseteq\stdCellint{X}$ and $\stdCellint{\tau_0}\subseteq\stdCellint{Y}$.

    Since $h(x)=h(y)=p$, by~Definition~\ref{d: Lattes triple} there exists $\gamma\in\Gamma$ such that $\gamma(x)=y$. The choice of $X,\,Y$ yields $\gamma(x)=y\in\brCellint{\gamma(X)}\cap\brCellint{Y}$.
    By the definition of normal fundamental domains, $\gamma(Q)\cap Q$ is a common face $Z\in\Rec_n(Q)\cap\Rec_n(\gamma(Q))$, which guarantees that $\Rec_n(Q)\cup\Rec_n(\gamma(Q))$ is a cell decomposition of $Q\cup\gamma(Q)$.
	Then $\gamma(X)=Y$ since $\brCellint{\gamma(X)}\cap\brCellint{Y}\neq\emptyset$. 
	Extend $\gamma|_X$ to a $Q$-symmetry $\tgamma$. The invariance of $\cK_n(Q)$ under $Q$-symmetries (cf.~(\ref{e: K(Q) is invariant})) yields $\gamma(\sigma_0)=\tgamma(\sigma_0)\in\cK_n(Q)$. Thus, since $y=\gamma(x)\in\brCellint{\gamma(\sigma_0)}\cap\brCellint{\tau_0}\neq\emptyset$, we have $\gamma(\sigma_0)=\tau_0$. Then since $h\circ\gamma=h$ (cf.~Definition~\ref{d: Lattes triple}), $\sigma=h(\sigma_0)=h(\gamma(\sigma_0))=h(\tau_0)=\tau$.
	 
	Therefore, $\cD_0$ is a cell decomposition of $\mfd^n$. By the same arguments, $\cD_1$ is also a cell decomposition. Since $\cK_{n,\lambda}(Q)$ refines $\cK_n(Q)$, $\cD_1$ refines $\cD_0$.
\end{proof}

Now we obtain a cellular Markov partition (cf.~Definition~\ref{d: cellular Markov}) of the Latt\`es map induced by an orthotopic Latt\`es triple.

\begin{proof}[Proof of Theorem~\ref{t: rectangle=>Markov}]
	Consider an orthotopic Latt\`es map $f\:\mfd^n\to\mfd^n$ with respect to an orthotopic Latt\`es triple $(\Gamma,h,A)$, where $\Gamma$ has a fundamental domain $Q\=\prod_{i=1}^n[0,a_i]$, and $A=\lambda I$ for some $\lambda\in\N$.
    Set $\cD_0\=\{h(c):c\in\cK_n(Q)\}$ and $\cD_1\=\{h(c):c\in\cK_{n,\lambda}(Q)\}$.

	By Proposition~\ref{p: ort Latt: induced cell decomp}, $D_0$ and $D_1$ are cell decompositions of $\mfd^n$ for which $D_1$ refines $D_0$. It remains to show that $f$ is $(\cD_1,\cD_0)$-cellular.
	
	Let $c\in\cD_1$ be arbitrary. Suppose $c=h(\sigma)$ for some $\sigma\in\cK_{n,\lambda}$. By~(\ref{e: ort Latt: fund domains are lattice}) and the construction of $\cK_{n,\lambda}(Q)$ (cf.~(\ref{e: ort Latt: K_n,l})), it is easy to see that $\tau\=A(\sigma)\in\cK_n(\gamma(Q))$ for some $\gamma\in\Gamma$.  Then since $h=h\circ\gamma^{-1}$ (cf.~Definition~\ref{d: Lattes triple}) and by Proposition~\ref{p: ort Latt: induced cell decomp}, $h|_\tau=\bigl(h|_{\gamma^{-1}(\tau)}\bigr)\circ\bigl(\gamma^{-1}|_\tau\bigr)$ is a homeomorphism for which $h(\tau)= h\bigl(\gamma^{-1}(\tau)\bigr)\in\cD_0$.
    Hence, since $h|_\sigma:\sigma\to c$ is homeomorphic (see Proposition~\ref{p: ort Latt: induced cell decomp}) and $f\circ h=h\circ A$ (cf.~Definition~\ref{d: Lattes triple}), $f|_c=h|_{\tau}\circ A|_{\sigma}\circ(h|_c)^{-1}$ is a homeomorphism between $c\in\cD_1$ and $f(c)=h(\tau)\in\cD_0$. 
\end{proof}



\begin{thebibliography}{99}

\bibitem[AKP10]{AKP10}
\textsc{Astola,~L.}, \textsc{Kangaslampi,~R.}, and \textsc{Peltonen,~K.},
Latt\`es-type maps on compact manifolds.
\textit{Conform.\ Geom.\ Dyn.} 14 (2010), 337--367.





















\bibitem[BM17]{BM17} 
\textsc{Bonk,~M.} and \textsc{Meyer,~D.},
\textit{Expanding Thurston maps}, Volume~225 of \textit{Math.\ Surveys Monogr.}, Amer.\ Math.\ Soc., Providence, RI, 2017.








































\bibitem[HP09]{HP09}
\textsc{Ha\"{\i}ssinsky,~P.} and \textsc{Pilgrim,~K.M.},
Coarse expanding conformal dynamics.
\textit{Ast\'{e}risque} 325 (2009).




















\bibitem[HR98]{HR98}
\textsc{Heinonen,~J.} and \textsc{Rickman,~S.}, 
Quasiregular maps $\mathbf S^3\to \mathbf S^3$ with wild branch sets.
\textit{Topology} 37 (1998), 1--24.
 
 

\bibitem[HR02]{HR02} 
\textsc{Heinonen,~J.} and \textsc{Rickman,~S.},
Geometric branched covers between generalized manifolds.
\textit{Duke Math.\ J.} 113 (2002), 465--529.


















\bibitem[IM96]{IM96}
\textsc{Iwaniec,~T.} and \textsc{Martin,~G.J.},
Quasiregular semigroups. 
\textit{Ann.\ Fenn.\ Math.} 21 (1996), 241--254. 




\bibitem[IM01]{IM01}
\textsc{Iwaniec,~T.} and \textsc{Martin,~G.J.},
\textit{Geometric Function Theory and Non-linear Analysis},
Oxford Univ.\ Press, New York, 2001.









\bibitem[Ka21]{Ka21}
\textsc{Kangasniemi,~I.}, 
Notes on quasiregular maps between Riemannian manifolds.
Preprint, (arXiv:2109.01638), 2021.


\bibitem[Ka22]{Ka22}
\textsc{Kangasniemi,~I.}, 
Obstructions for automorphic quasiregular maps and Latt\`es-type uniformly quasiregular maps. 
\textit{J.\ Anal.\ Math.} 146 (2022), 401--439.



 














\bibitem[LPZ25]{LPZ25} 
\textsc{Li},~Zhiqiang, \textsc{Pankka,~P.}, and \textsc{Zheng},~Hanyun,
Rigidity and quasisymmetric uniformization of Thurston-type maps. 
Preprint, (arXiv: 2501.00434), 2025.
























 



\bibitem[Mar14]{Mar14}
\textsc{Martin,~G.J.},
The Theory of Quasiconformal maps in Higher Dimensions, I.
In \textit{Handbook of Teichm\"uller theory}, Volume~IV, IRMA Lect.\ Math.\ Theor.\ Phys., 19, Eur.\ Math.\ Soc., Z\"urich, pp.~619--677.





\bibitem[MarPe10]{MarPe10}
\textsc{Martin,~G.J.} and \textsc{Peltonen,~K.},
Sto\"ilow factorization for quasiregular maps in all dimensions. 
\textit{Proc.\ Amer.\ Math.\ Soc.} 138 (2010), 147--151.












\bibitem[May97]{May97}
\textsc{Mayer,~V.}, 
Uniformly quasiregular maps of Latt\`es type. 
\textit{Conform.\ Geom.\ Dyn.} 1 (1997), 104--111.


\bibitem[May98]{May98}
\textsc{Mayer,~V.}, 
Quasiregular analogues of critically finite rational functions with parabolic orbifold. 
\textit{J.\ d'Analyse Math.} 75 (1998), 105--119.



\bibitem[McS98]{McS98}
\textsc{McMullen,~C.T.} and \textsc{Sullivan,~D.P.},
Quasiconformal homeomorphisms and dynamics. III. The Teichm\"uller space of a holomorphic dynamical system, 
\textit{Adv.\ Math.} 135 (1998), 351--395.







\bibitem[Mu00]{Mu00}
\textsc{Munkres,~J.R.},
\textit{Topology},
Prentice Hall, Upper Saddle River, NJ, 2000.












\bibitem[Re89]{Re89}
\textsc{Reshetnyak,~Yu.G.},
\textit{Space Maps with Bounded Distortion},
volume~73 of \textit{Translations of Mathematical Monographs}, 
Amer.\ Math.\ Soc., Providence, RI, 1989.


\bibitem[Ri93]{Ri93}
\textsc{Rickman,~S.},
\textit{Quasiregular Maps},
Springer, Berlin, 1993.






\bibitem[SV93]{SV93}
\textsc{Shvartsman,~O.V.} and \textsc{Vinberg,~E.B.},
Discrete groups of motions of spaces of constant curvature, in \textit{Geometry II, Encyclopaedia of Mathematical Sciences}, Volume~29, 
Springer, Berlin, 1993, pp.~139‐-248.




















\bibitem[Vu88]{Vu88}
\textsc{Vuorinen,~M.},
\textit{Conformal Geometry and Quasiregular Maps},
Springer, Berlin, 1988.










\bibitem[Yu13]{Yu13}
\textsc{Yu,}~Li,
Crystallographic groups with cubic normal fundamental domain, in \textit{Geometry of Transformation Groups and Combinatorics}, RIMS K\^oky\^uroku Bessatsu, B39, Res.\ Inst.\ Math.\ Sci.\ (RIMS), Kyoto, 2013, pp.~233--244.





\end{thebibliography}
\end{document}